\DeclarePairedDelimiter\floor{\lfloor}{\rfloor}
\def\ds{\displaystyle}
\def\ni{\noindent}
 \newtheorem{theorem}{Theorem}[section]
\newtheorem{lemma}[theorem]{Lemma}
\newtheorem{dfn}[theorem]{Definition}
\newtheorem{rmk}[theorem]{Remark}
\def\ni{\noindent}
 \numberwithin{equation}{section}
\begin{document}
\begin{center} \textbf{ \Large{Signless Laplacian spectral characterization of some graphs }
}  \end{center}
\begin{center}
	\textbf{Rakshith B. R.}\vskip0.5cm
	Department of Mathematics\\ Vidyavardhaka College of Engineering,
	 Mysuru - 570 002, INDIA\\
	E-mail:ranmsc08@yahoo.co.in\\
\end{center}
\begin{abstract}
	In this paper, we investigate (signless) Laplacian spectral characterization of graphs with star components. Also, we prove that the join graph ${\small K_{n-\alpha}-e\vee \alpha K_{1}}$ is $DQS$ for $n-\alpha >3$ and $\alpha\neq3$, and for $\alpha=3$, we show that its only $Q$-cospectral mate is $\overline{K_{1,3}\cup K_{2}\cup (n-6)K_{1}}$.   	
\end{abstract}
\ni  \textbf{Keywords}: Laplacian spectrum, signless Laplacian spectrum, cospectral graphs, spectral characterization. \\\\
\ni \textbf{MSC (2010)}:
05C50.
\section{Introduction}
Graphs considered in this paper are simple and undirected. For a graph $G$ with vertex set $V(G)$ (where $|V(G)|=n$) and edge set $E(G)$, the degree of a vertex $v_{i}$ of $G$  is the number of neighbors of $v_{i}$ and is denoted by $d_{i}(G)$. Throughout the paper, we assume that the sequence $\{d_{i}(G)\}_{i=1}^{n}$ is non increasing, i.e., $d_{i}(G)\ge d_{i+1}(G)$, $i=1,2,\ldots,n-1$. Let $A(G)$ be the adjacency matrix of $G$ and $D_{_G}$ be the diagonal matrix of degrees of $G$. The Laplacian matrix $L(G)$ and signless Laplacian matrix $Q(G)$ of the graph $G$ are defined as $L(G)=D_G-A(G)$ and $Q(G)=D_G+A(G)$, respectively.  We denote the Laplacian eigenvalues of  $G$  by $\mu_{1}(G)\ge \mu_{2}(G)\ge \ldots \ge \mu_{n}(G)$ and let $\gamma_{1}(G)\ge \gamma_{2}(G)\ge \ldots \ge \gamma_{n}(G)$ be the signless Laplacian eigenvalues of $G$.  The Spectrum of the matrix  $A(G)$ (resp.
$L(G)$ and $Q(G)$) is called the $A$-spectrum (resp. $L$-spectrum and $Q$-spectrum) of $G$. We denote by $Spec_{L}(G)$, the Laplacian spectrum of $G$ and $Spec_{Q}(G)$ denote the signless Laplacian spectrum of $G$.  It may be noted that for a bipartite graph, its Laplacian spectrum and signless Laplacian spectrum coincides. Two graphs are said to be  $A$-cospectral (resp. $L$-cospectral and $Q$-cospectral) if they have the same $A$-spectrum (resp. $L$-spectrum, $Q$-spectrum). If two graphs are $A$-cospectral (resp. $L$- cospectral, $Q$-cospectral), then they are not necessarily isomorphic. A graph $G$ is said to be $DS$ (resp. $DLS$, $DQS$) if there is
no other non-isomorphic graph $A$-cospectral (resp. $L$-cospectral, $Q$-cospectral) with $G$. Interestingly, a graph $G$ is $DLS$ if and only its complement $\overline{G}$ is $DLS$. The problem of characterizing graphs which are determined by their spectra is one of the interesting and difficult problem in spectral graph theory. This problem was posed by G\"{u}nthard and Primas \cite{primas} with motivations from H\"{u}ckel theory. In \cite{Dam,Dam1}, Dam and Haemers gave a survey of answers to the question of which graphs are determined by the spectrum of some matrix associated to the graph.   In literature, there are several papers addressing the problem of characterizing graphs which are $DS$, $DLS$ and $DQS$. In fact, only few graphs with special structures have been proved to be determined by their spectra. For some recent papers on this topic, see \cite{kite,R3,dsi,butter,R2}.
As usual, $P_{n}$, $C_{n}$, $S_{n}$ and $K_{n}$ stand for the path, the cycle, the star graph and the complete
graph of order $n$, respectively. We use $K_{n,m}$ to denote the complete bipartite graph with vertex partition sets of order $n$ and $m$, and let $G-e$ be the graph obtained from $G$ by deleting the edge $e$ of $G$.\\\\ Recently, (signless) Laplacian spectral characterization of disjoint union of graphs have been studied and also the problem of characterizing join graphs which are determined by their (signless) Laplacian  spectra is considered.   In \cite{LJ, LJ1, LJ2, dsi}, (signless) Laplacian  characterization of graphs with isolated vertices were studied. In \cite{cou}, Abdian et al. investigated signless Laplacian spectral characterization of graphs with independent egdes and isolated vertices. In \cite{j2,j3,j4,j5,split}, the join graph $G\vee K_{r}$ is proved to be $DQS$ under certain conditions on $G$, namely, when $G$ is a  $k$-regular graph for $k=1,2, n-2,n-3$ and if $G$ is a complete split graph. Motivated by these works in Section 2 of this paper, we investigate (signless) Laplacian spectral characterization of graphs with star components and in    Section 3, we prove that the join graph ${\small K_{n-\alpha}-e\vee \alpha K_{1}}$ is $DQS$ for $n-\alpha >3$ and $\alpha\neq3$, and for $\alpha=3$, we show that its only $Q$-cospectral mate is $\overline{K_{1,3}\cup K_{2}\cup (n-6)K_{1}}$.

\section{Some disjoint union of graphs determined by their signless Laplacian spectra}\label{section}
In this section, we first give a list of lemmas that are useful to prove our main results. We denote by $G_{p,n,q}$, the double starlike tree obtained by attaching $p$ pendant vertices to
one pendant vertex of the path $P_n$ and $q$ pendant vertices to the other pendant vertex of $P_n$. Let $m^{-}_{_G}(\lambda)$ ( resp.  $m^{+}_{_G}(\lambda)$ ) be the multiplicity of the Laplacian (resp. signless Laplacian) eigenvalue $\lambda$ of $G$. 
\begin{lemma}[\cite{bookc}]\label{l1}
	Let $G$ be a graph with $n$ vertices and $m$ edges. Then $\gamma_{1}(G) \ge \dfrac{4m}{n}$, with equality if and only if $G$ is
	regular graph of degree $\dfrac{\gamma_{1}(G)}{2}$.
\end{lemma}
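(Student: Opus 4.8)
The plan is to bound $\gamma_{1}(G)$ from below via the Rayleigh quotient for the symmetric matrix $Q(G)=D_{G}+A(G)$, using the all-ones vector as a test vector, and then to determine exactly when that test vector is optimal.

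First I would invoke the variational characterization $\gamma_{1}(G)=\max_{x\neq 0}\dfrac{x^{\top}Q(G)x}{x^{\top}x}$. Evaluating the quotient at $x=\mathbf{1}$, the all-ones vector of length $n$, gives $\mathbf{1}^{\top}Q(G)\mathbf{1}=\mathbf{1}^{\top}D_{G}\mathbf{1}+\mathbf{1}^{\top}A(G)\mathbf{1}=\sum_{i=1}^{n}d_{i}(G)+\sum_{i,j}a_{ij}=2m+2m=4m$, while $\mathbf{1}^{\top}\mathbf{1}=n$, so $\gamma_{1}(G)\ge 4m/n$ follows at once.

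For the equality case, I would use that $Q(G)$ is symmetric, hence admits an orthonormal eigenbasis, so equality in the Rayleigh quotient at $\mathbf{1}$ forces $\mathbf{1}$ to be an eigenvector of $Q(G)$ for the eigenvalue $\gamma_{1}(G)$. Reading $Q(G)\mathbf{1}=\gamma_{1}(G)\mathbf{1}$ coordinatewise yields $2d_{i}(G)=\gamma_{1}(G)$ for every $i$, i.e. $G$ is regular of degree $\gamma_{1}(G)/2$. Conversely, if $G$ is $r$-regular then $Q(G)=rI+A(G)$ has largest eigenvalue $r+\lambda_{\max}(A(G))=2r$, and $m=nr/2$ gives $4m/n=2r=\gamma_{1}(G)$, so equality holds with $r=\gamma_{1}(G)/2$.

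There is no serious obstacle here; the inequality is an immediate consequence of the variational characterization. The only point requiring a little care is the equality analysis — one must note both that optimality of the test vector forces $\mathbf{1}$ to be a $\gamma_{1}$-eigenvector (which uses the symmetry of $Q(G)$) and that the degree condition should be recorded in the normalized form $d_{i}(G)=\gamma_{1}(G)/2$, which the converse computation confirms is the consistent statement.
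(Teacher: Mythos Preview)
Your argument is correct and is precisely the standard Rayleigh-quotient proof of this inequality. The paper does not supply its own proof of this lemma; it is quoted from \cite{bookc}, so there is nothing further to compare against.
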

\begin{lemma}[\cite{bookc}]\label{l2}
For a connected graph $G$, we have $\mu_{1}(G)\le n$.
\end{lemma}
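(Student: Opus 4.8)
The plan is to estimate the largest Laplacian eigenvalue through its Rayleigh-quotient description $\mu_1(G)=\max_{x\neq 0}\frac{x^{T}L(G)x}{x^{T}x}$, combined with the standard identity $x^{T}L(G)x=\sum_{v_iv_j\in E(G)}(x_i-x_j)^2$. The crux is that the sum over the edges of $G$ is dominated by the sum over \emph{all} pairs of indices: for every $x=(x_1,\dots,x_n)^{T}$,
\[
x^{T}L(G)x=\sum_{v_iv_j\in E(G)}(x_i-x_j)^2\le\sum_{1\le i<j\le n}(x_i-x_j)^2=n\sum_{i=1}^{n}x_i^{2}-\Big(\sum_{i=1}^{n}x_i\Big)^{2}\le n\,x^{T}x,
\]
where the first inequality holds because $E(G)$ is a subset of all unordered pairs and each summand is nonnegative, the middle equality is the elementary expansion of $\sum_{i<j}(x_i-x_j)^2$, and the last inequality drops the nonnegative term $(\sum_i x_i)^2$. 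Dividing by $x^{T}x$ and taking the maximum over $x\neq 0$ yields $\mu_1(G)\le n$.

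Concretely I would proceed in four short steps: (i) recall the quadratic-form formula for $L(G)$; (ii) verify $\sum_{1\le i<j\le n}(x_i-x_j)^2=n\sum_i x_i^2-(\sum_i x_i)^2$ by expanding the squares and counting how often each $x_k^2$ appears; (iii) use $E(G)\subseteq\{\{i,j\}:1\le i<j\le n\}$ together with nonnegativity of the terms to pass to the full pair-sum; (iv) conclude via Courant--Fischer.

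Since the statement is classical, there is no genuine obstacle; the only point deserving care is the (unstated) equality analysis. Tracing equality back through the chain shows $\mu_1(G)=n$ forces an optimizing $x$ with $\sum_i x_i=0$ and $x_i=x_j$ for every \emph{non}-edge $v_iv_j$; for connected $G$ this system can still have a nonzero solution --- precisely when $\overline{G}$ is disconnected, i.e.\ $G$ is a nontrivial join --- so the bound $\mu_1(G)\le n$ is tight and not always strict even under the connectivity hypothesis. An alternative proof, which I would relegate to a remark, uses the identity $L(G)+L(\overline{G})=nI_n-J_n$ with $J_n$ the all-ones matrix: restricting these two positive semidefinite matrices to the hyperplane orthogonal to the all-ones vector gives $L(G)=nI_n-L(\overline{G})$ there, and positive semidefiniteness of $L(\overline{G})$ forces all eigenvalues of $L(G)$ on that hyperplane to be at most $n$; together with the zero eigenvalue on the all-ones vector this again gives $\mu_1(G)\le n$.
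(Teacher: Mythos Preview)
Your argument is correct. The Rayleigh-quotient bound
\[
x^{T}L(G)x=\sum_{v_iv_j\in E(G)}(x_i-x_j)^2\le\sum_{1\le i<j\le n}(x_i-x_j)^2=n\,x^{T}x-\Big(\sum_{i}x_i\Big)^{2}\le n\,x^{T}x
\]
is valid line by line, and the Courant--Fischer conclusion $\mu_1(G)\le n$ follows immediately. Your alternative via $L(G)+L(\overline{G})=nI_n-J_n$ is equally sound and is in fact the more common textbook route, since it also yields the full complementarity relation $\mu_i(G)+\mu_{n+1-i}(\overline{G})=n$ for $i\ge 2$.

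As for comparison with the paper: the paper does not prove this lemma at all. It is quoted verbatim from the reference \cite{bookc} and used as a black box. So there is no ``paper's approach'' to contrast with yours; your write-up simply supplies a self-contained justification where the paper relies on citation. Two minor remarks: (i) your proof nowhere uses the connectivity hypothesis, which is expected since $\mu_1(G)\le n$ holds for \emph{all} graphs on $n$ vertices; the paper presumably states it for connected $G$ only because that is the context in which it is later applied. (ii) Your equality discussion is accurate --- $\mu_1(G)=n$ if and only if $\overline{G}$ is disconnected --- though the paper never invokes the equality case, so this is a bonus rather than a requirement.
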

\begin{lemma}[\cite{a2}]\label{lq}
	If G is a bipartite graph, then the Q-spectrum of G is equal to its L-spectrum.
\end{lemma}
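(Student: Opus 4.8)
The plan is to produce an explicit orthogonal matrix that conjugates $Q(G)$ into $L(G)$; since a similarity transformation preserves eigenvalues, this immediately yields the claim. First I would use bipartiteness to fix a $2$-colouring $V(G)=X\cup Y$ in which every edge joins $X$ to $Y$, and then form the diagonal signature matrix $S=\operatorname{diag}(s_v)_{v\in V(G)}$ with $s_v=+1$ for $v\in X$ and $s_v=-1$ for $v\in Y$. This $S$ is symmetric and orthogonal, and is an involution: $S=S^{\top}=S^{-1}$.

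Next I would verify, entry by entry, that $S\,Q(G)\,S=L(G)$. Because $S$ and $D_G$ are both diagonal, $S D_G S=D_G$. For the adjacency part, the $(u,v)$ entry of $S A(G) S$ is $s_u\,(A(G))_{uv}\,s_v$, and whenever $(A(G))_{uv}=1$ the vertices $u$ and $v$ are adjacent, hence lie in different colour classes, so $s_u s_v=-1$; thus $S A(G) S=-A(G)$. Combining these,
\[
S\,Q(G)\,S = S D_G S + S A(G) S = D_G - A(G) = L(G).
\]

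Finally, since $S$ is invertible we obtain $Q(G)=S^{-1}L(G)S$, so $Q(G)$ and $L(G)$ are similar matrices; they therefore have the same characteristic polynomial and the same multiset of eigenvalues, i.e.\ $Spec_{Q}(G)=Spec_{L}(G)$. There is no genuine obstacle here: the lemma is classical, and the only point requiring any care is the elementary observation that in a bipartite graph the support of $A(G)$ lies entirely ``off the diagonal blocks'' of the colouring, which is precisely what forces $s_u s_v=-1$ on that support. For a disconnected bipartite graph the same matrix $S$ works verbatim (pick a bipartition of each component), so no separate argument for the connected case is needed.
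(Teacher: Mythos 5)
Your proof is correct and complete: the signature matrix $S=\operatorname{diag}(\pm1)$ determined by the bipartition satisfies $SQ(G)S=D_G-A(G)=L(G)$, so $Q(G)$ and $L(G)$ are similar and hence cospectral, and your remark that the same $S$ handles disconnected graphs closes the only loose end. The paper itself imports this lemma from the cited reference without proof, and your argument is precisely the standard one used there, so there is nothing further to compare.
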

\begin{lemma}[\cite{mull}]\label{mull}
Let $T$ be a tree. Then we have\\
$(1)$ $m^{-}_{T}(1)=n-4$ if and only if $T$ is a double star graph.\\
$(2)$ $m^{-}_{T}(1)=n-5$ if and only if $T\cong G_{p,3,q}$ or $T\cong G_{p,4,q}$.
\end{lemma}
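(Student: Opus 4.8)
I would work directly with the eigenspace of the Laplacian eigenvalue $1$. Let $x$ satisfy $L(T)x=x$. For a pendant vertex $v$ with unique neighbour $w$, the $v$-th equation reads $x_v-x_w=x_v$, so $x_w=0$; hence $x$ vanishes on every \emph{quasipendant} vertex (a vertex adjacent to a leaf). Substituting this back, the equation at a quasipendant $w$ becomes $\sum_{u\sim w}x_u=0$, and the equation at a vertex $v$ that is neither pendant nor quasipendant becomes $(d_v-1)x_v=\sum_{u\sim v}x_u$. Writing $P,Q,I$ for the sets of pendant, quasipendant and remaining vertices, the restriction map $x\mapsto x|_{P\cup I}$ is a bijection from the $1$-eigenspace onto the solution set of the homogeneous system $M$ consisting of the $|Q|$ ``sum'' equations together with the $|I|$ ``interior'' equations, in the variables indexed by $P\cup I$. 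Hence $m^{-}_{T}(1)=|P|+|I|-\operatorname{rank}(M)=n-|Q|-\operatorname{rank}(M)$. Finally $\operatorname{rank}(M)\ge|Q|$, because each $w\in Q$ has a private leaf neighbour $\ell_w$ and the column $\ell_w$ of $M$ is nonzero only in the row of $w$; so $m^{-}_{T}(1)\le n-2|Q|$.

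I would then localise $|Q|$. If $|Q|=1$, the unique quasipendant has no non-leaf neighbour (walking away from it in $T$ one would reach a second quasipendant), so $T=K_{1,n-1}$ and $m^{-}_{T}(1)=n-2$, which is neither $n-4$ nor $n-5$. Thus $|Q|\ge2$ in both parts; combined with $m^{-}_{T}(1)\le n-2|Q|$, this forces $|Q|=2$ when $m^{-}_{T}(1)=n-4$, and also $|Q|=2$ when $m^{-}_{T}(1)=n-5$ (the value $1$ is excluded and $|Q|\ge3$ would give $m^{-}_{T}(1)\le n-6$).

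With $|Q|=2$ I would pin down the structure of $T$. Deleting all leaves of $T$ yields a tree $T'$ every leaf of which lies in $Q$ (a leaf of $T'$ in $I$ would have been a leaf of $T$); since $|Q|=2$, $T'$ is a path whose endpoints are the two quasipendants $w_1,w_2$, and every internal vertex of $T'$ lies in $I$ and hence has degree $2$ in $T$. Consequently $T\cong G_{p,k+2,q}$ for some $k\ge0$ and $p,q\ge1$, with the $P_{k+2}$ joining $w_1$ to $w_2$; the case $k=0$ is precisely the double star. It remains to evaluate $m^{-}_{T}(1)$ for $G_{p,k+2,q}$: reading the $1$-eigenvalue equations along the path and using $x_{w_1}=x_{w_2}=0$ produces the three-term recurrence $x_{u_{i+1}}=x_{u_i}-x_{u_{i-1}}$ on the $k$ internal path vertices, i.e.\ the system $(I-A(P_k))y=0$, which is singular exactly when $1$ is an eigenvalue of $A(P_k)$, that is, when $k\equiv2\pmod3$. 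Adjoining the two ``sum'' equations at $w_1$ and $w_2$ then gives
\[
m^{-}_{T}(1)=\begin{cases} n-k-4, & k\not\equiv2\pmod3,\\ n-k-3, & k\equiv2\pmod3,\end{cases}
\]
whence $m^{-}_{T}(1)=n-4\iff k=0$ and $m^{-}_{T}(1)=n-5\iff k\in\{1,2\}$, yielding the double star in part~(1) and $G_{p,3,q}$ or $G_{p,4,q}$ in part~(2); this computation also establishes the ``if'' directions.

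The dimension bookkeeping in the first paragraph and the structural reduction via $T'$ are routine. The one genuinely delicate point is the rank computation for $G_{p,k+2,q}$: isolating the modulo-$3$ dichotomy coming from the spectrum of the path-adjacency matrix $A(P_k)$, and verifying the small cases $k=0,1,2$ by hand.
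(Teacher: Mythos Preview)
The paper does not prove this lemma; it is quoted from the literature (the citation \texttt{[mull]}), so there is no in-paper argument to compare against. Your proof is correct and self-contained.

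Your approach---reading the eigenvector equation $L(T)x=x$ at each vertex, deducing that $x$ vanishes on every quasipendant, and then bounding $m^{-}_{T}(1)\le n-2|Q|$ via a rank estimate using one private leaf per quasipendant---is a clean way to force $|Q|=2$ in both cases. The structural step (pruning the leaves to obtain a path $T'$ with the two quasipendants as endpoints, hence $T\cong G_{p,k+2,q}$) is standard and correctly argued. The final computation reducing the interior equations to $(I-A(P_k))y=0$ and invoking the spectrum $2\cos\!\bigl(j\pi/(k+1)\bigr)$ of $A(P_k)$ to get the modulo-$3$ dichotomy is the right idea; your case split
\[
m^{-}_{T}(1)=\begin{cases} n-k-4,& k\not\equiv 2\pmod 3,\\ n-k-3,& k\equiv 2\pmod 3,\end{cases}
\]
checks out (for $k\equiv 2\pmod 3$ the interior block has rank $k-1$, and the two quasipendant equations, each involving its own set of leaf variables with $p,q\ge1$, add rank $2$, giving total rank $k+1$). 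Solving $m^{-}_{T}(1)=n-4$ and $n-5$ then yields exactly $k=0$ and $k\in\{1,2\}$, respectively, and the computation simultaneously handles the ``if'' directions.

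One small point worth making explicit in a write-up: when you form the reduced system $M$ in the variables indexed by $P\cup I$, the sums $\sum_{u\sim w}x_u$ and $\sum_{u\sim v}x_u$ should be read as sums over neighbours in $P\cup I$ only (neighbours in $Q$ contribute $0$); this is implicit in your substitution but deserves a word, since e.g.\ in the double star the two quasipendants are adjacent to each other.
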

\begin{lemma}[\cite{p1}]\label{p}
If H is a proper subgraph of a connected graph G, then $\gamma_{1}(H) < \gamma_{1}(G)$.
\end{lemma}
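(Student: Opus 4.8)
The plan is to treat $\gamma_1(\cdot)$ as the Perron root of the nonnegative (indeed positive semidefinite) symmetric matrix $Q(\cdot)$ and to invoke the \emph{strict} monotonicity of the Perron root under componentwise comparison. We may assume $G$ has at least two vertices, otherwise it has no proper subgraph; being connected it then has an edge, so $Q(G)$ is positive semidefinite with $\gamma_1(G) = \rho(Q(G)) > 0$ (one can also quote Lemma \ref{l1}). Moreover the off-diagonal part of $Q(G)$ is exactly $A(G)$, so $Q(G)$ is irreducible if and only if $G$ is connected; hence here $Q(G)$ is irreducible, and by Perron--Frobenius $\gamma_1(G)$ is a simple eigenvalue with a componentwise positive eigenvector.

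Next I would compare $Q(H)$ with $Q(G)$ inside the ambient space $\mathbb{R}^{V(G)}$. Pad $Q(H)$ with zero rows and columns for the indices in $V(G)\setminus V(H)$, obtaining $\widetilde{Q} = \mathrm{diag}(Q(H),0)$, a positive semidefinite matrix with $\rho(\widetilde{Q}) = \gamma_1(H)$. Since $H$ is a subgraph of $G$ we have $d_H(v)\le d_G(v)$ and $[\,uv\in E(H)\,]\le[\,uv\in E(G)\,]$, so $0\le \widetilde{Q}\le Q(G)$ entrywise; and since $H$ is a \emph{proper} subgraph of a connected graph with no isolated vertices, $E(H)\subsetneq E(G)$ (if some vertex of $G$ is missing from $H$, any edge at that vertex lies in $E(G)\setminus E(H)$; if no vertex is missing this is immediate), and any such missing edge gives an off-diagonal position where the two matrices differ, so $\widetilde{Q}\ne Q(G)$. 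Applying the standard strict Perron--Frobenius comparison — if $A$ is irreducible nonnegative and $0\le B\le A$ with $B\ne A$, then $\rho(B)<\rho(A)$ — with $A=Q(G)$ and $B=\widetilde{Q}$ yields $\gamma_1(H)=\rho(\widetilde{Q})<\rho(Q(G))=\gamma_1(G)$.

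If a self-contained argument is preferred, write $Q(G)=\sum_{\{u,v\}\in E(G)}(\mathbf{e}_u+\mathbf{e}_v)(\mathbf{e}_u+\mathbf{e}_v)^{\top}$, let $x>0$ be the Perron eigenvector of $Q(G)$ and $y>0$ a left Perron eigenvector, and fix $e_0=\{u_0,v_0\}\in E(G)\setminus E(H)$. Then $\widetilde{Q}x\le Q(G)x=\gamma_1(G)x$ entrywise, with the $u_0$-coordinate strict because $x_{u_0}+x_{v_0}>0$; pairing $\gamma_1(G)x - \widetilde{Q}x$ (a nonzero nonnegative vector) with $y>0$ shows $y^{\top}\widetilde{Q}x<\gamma_1(G)\,y^{\top}x$, which combined with $\rho(\widetilde{Q})\,y^{\top}x \ge y^{\top}\widetilde{Q}x$ forces $\rho(\widetilde{Q})<\gamma_1(G)$.

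There is no substantial obstacle; the care required is organizational. One must set things up so that all ways in which $H$ can be a proper subgraph — fewer vertices, fewer edges, or both — are handled uniformly, which is what the reduction to $E(H)\subsetneq E(G)$ accomplishes; and one must use the strict form of Perron-root monotonicity, whose hypothesis is the irreducibility of the \emph{larger} matrix $Q(G)$. This is precisely where the connectedness of $G$ is used, and it cannot be dropped: for disconnected $G$ one may have $\gamma_1(H)=\gamma_1(G)$ for a proper subgraph $H$. The degenerate cases $G\cong K_1$ and $H$ edgeless are immediate.
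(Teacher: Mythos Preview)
The paper does not supply its own proof of this lemma; it is quoted from the reference \cite{p1} as a known result, so there is no in-paper argument to compare against. Your main argument is correct and is the standard one: $Q(G)=D_G+A(G)$ is nonnegative and, since $G$ is connected, irreducible; the padded matrix $\widetilde{Q}$ satisfies $0\le\widetilde{Q}\le Q(G)$ entrywise with $\widetilde{Q}\ne Q(G)$ because $E(H)\subsetneq E(G)$; and the strict Perron--Frobenius comparison theorem then gives $\gamma_1(H)=\rho(\widetilde{Q})<\rho(Q(G))=\gamma_1(G)$.

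One small remark on your self-contained paragraph: the claimed inequality $\rho(\widetilde{Q})\,y^{\top}x\ge y^{\top}\widetilde{Q}x$ is not valid for arbitrary positive $x,y$ and nonnegative $\widetilde{Q}$. What saves it here is that $Q(G)$ is symmetric, so the left and right Perron eigenvectors coincide and one may take $y=x$; then $x^{\top}\widetilde{Q}x\le \rho(\widetilde{Q})\,x^{\top}x$ is simply the Rayleigh-quotient bound for the symmetric matrix $\widetilde{Q}$. It would be cleaner to state this directly rather than introduce a separate left eigenvector.
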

Let $p^{-}(G)$ (resp. $p^{+}(G)$) denote the product of non-zero Laplacian (resp. singless Laplacian) eigenvalues of $G$.
\begin{lemma}[\cite{bookc}]\label{pro}
For any connected bipartite graph $G$ of order $n$, we have $p^{+}(G) = p^{-}(G) = n\tau (G)$, where $\tau (G)$ is the
number of spanning trees of $G$.
\end{lemma}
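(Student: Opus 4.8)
The plan is to split the claimed identity into the two equalities $p^{+}(G)=p^{-}(G)$ and $p^{-}(G)=n\tau(G)$, and to treat them separately. The hypothesis that $G$ is connected and bipartite will be used in its entirety, but in two different places: bipartiteness only to pass from $L(G)$ to $Q(G)$, and connectedness only to guarantee that $0$ is a simple eigenvalue.

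For the first equality I would simply invoke Lemma~\ref{lq}: since $G$ is bipartite, the $Q$-spectrum of $G$ equals its $L$-spectrum. Because $G$ is connected, $0$ is a simple eigenvalue of $L(G)$, hence also of $Q(G)$, so the multiset of non-zero $L$-eigenvalues coincides with the multiset of non-zero $Q$-eigenvalues; taking products gives $p^{+}(G)=p^{-}(G)$. If a self-contained argument is preferred, one can exhibit the signature matrix $S=\operatorname{diag}(\pm 1)$ with $+1$ on one colour class and $-1$ on the other and check that $S^{-1}Q(G)S=L(G)$, using that every edge of $G$ joins vertices of opposite colour; this yields the spectral identity directly.

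For the second equality the natural tool is the Matrix--Tree theorem. Write the characteristic polynomial of $L(G)$ as $\det(xI-L(G))=\prod_{i=1}^{n}(x-\mu_{i}(G))=x\prod_{i=1}^{n-1}(x-\mu_{i}(G))$, where $\mu_{n}(G)=0$ and $\mu_{i}(G)>0$ for $i\le n-1$ by connectedness. Comparing the coefficient of $x^{1}$ on the two sides, on the right it equals $(-1)^{n-1}\prod_{i=1}^{n-1}\mu_{i}(G)=(-1)^{n-1}p^{-}(G)$, while on the left it equals $(-1)^{n-1}$ times the sum of the $n$ principal $(n-1)\times(n-1)$ minors of $L(G)$. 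By the Matrix--Tree theorem each such principal minor equals $\tau(G)$, so the left-hand coefficient is $(-1)^{n-1}n\tau(G)$. Equating the two expressions gives $p^{-}(G)=n\tau(G)$, and together with the first step we obtain $p^{+}(G)=p^{-}(G)=n\tau(G)$.

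I do not expect a serious obstacle here, as the statement is classical and each half is short. The only point requiring a little care is the sign bookkeeping in the coefficient comparison and the identification of each principal cofactor of $L(G)$ with $\tau(G)$; everything else is immediate once Lemma~\ref{lq} is in hand.
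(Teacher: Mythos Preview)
Your argument is correct and is precisely the standard derivation of this classical identity. The paper does not supply its own proof of this lemma---it is quoted without proof from \cite{bookc}---so there is nothing to compare against; your two-step approach via Lemma~\ref{lq} (or the explicit signature similarity) together with the Matrix--Tree theorem is exactly how the result is usually established.
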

\begin{lemma}[\cite{d4}]\label{d4}
For any graph $G$, $det(Q(G)) = 4$ if and only if $G$ is an odd unicyclic graph.
\end{lemma}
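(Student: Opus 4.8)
The plan is to combine the incidence-matrix factorisation of the signless Laplacian with a short rank-one perturbation argument. Write $N=N(G)$ for the (unsigned) vertex--edge incidence matrix of $G$, so that $Q(G)=NN^{\top}$, and recall two standard facts about $N$: (i) for a connected graph $H$ on $h$ vertices, $\operatorname{rank}N(H)=h$ if $H$ is non-bipartite and $\operatorname{rank}N(H)=h-1$ if $H$ is bipartite; (ii) if $H$ is connected and unicyclic on $h$ vertices (so $H$ has $h$ edges and $N(H)$ is a square matrix), then $|\det N(H)|=2$ when the unique cycle of $H$ is odd and $\det N(H)=0$ when it is even. From (i) we read off that, for a connected graph $H$, the matrix $Q(H)=NN^{\top}$ is nonsingular if and only if $H$ is non-bipartite; and from (ii), since $N(H)$ is square in that case, $\det Q(H)=(\det N(H))^{2}=4$ whenever $H$ is connected and odd unicyclic. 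As an odd unicyclic graph is connected by definition, this already settles the ``if'' direction.

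For the ``only if'' direction the key step --- and the only place where I expect any genuine work --- is the following monotonicity claim: if $G'$ is obtained from a connected non-bipartite graph $G$ by adding a single edge $e=uv$, then $\det Q(G')>\det Q(G)$. To see this, put $w=\mathbf{1}_{u}+\mathbf{1}_{v}$, so that $Q(G')=Q(G)+ww^{\top}$; by the matrix determinant lemma, $\det Q(G')=\det Q(G)+w^{\top}\operatorname{adj}\!\bigl(Q(G)\bigr)w$. Now $Q(G)$ is positive semidefinite (as $Q(G)=NN^{\top}$) and, by the previous paragraph, nonsingular, hence positive definite; therefore $\operatorname{adj}(Q(G))=\det Q(G)\cdot Q(G)^{-1}$ is positive definite and the correction term $w^{\top}\operatorname{adj}(Q(G))w$ is strictly positive.

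Next I would reduce the ``only if'' direction to the connected case. Since $\det Q(G)=\prod_{C}\det Q(C)$ over the connected components $C$ of $G$, and since $\det Q(C)=0$ for every bipartite $C$ (in particular for every isolated vertex) while $\det Q(C)\ge 4$ for every non-bipartite $C$ --- indeed, such a $C$ contains a spanning odd unicyclic subgraph $H_{0}$, obtained by taking a spanning tree of $C$ together with an edge of $C$ both of whose ends lie in the same class of a proper $2$-colouring of that tree (such an edge exists precisely because $C$ is not bipartite), and the monotonicity claim then gives $\det Q(C)\ge\det Q(H_{0})=4$ --- the hypothesis $\det Q(G)=4$ forbids any bipartite component and forces $G$ to have exactly one component. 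Thus $G$ is connected and non-bipartite.

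To conclude, let $H$ be a spanning odd unicyclic subgraph of this connected non-bipartite $G$, constructed as in the previous paragraph; then $H$ is connected and odd unicyclic, so $\det Q(H)=4$. If $G$ had an edge not in $H$, then adding the edges of $G$ missing from $H$ one at a time produces a chain of connected non-bipartite graphs from $H$ up to $G$, and applying the monotonicity claim along this chain gives $\det Q(G)>\det Q(H)=4$, contradicting the hypothesis. Hence $E(G)=E(H)$, i.e.\ $G=H$ is odd unicyclic, which completes the proof. (An alternative would be to expand $\det(NN^{\top})$ directly by the Cauchy--Binet formula into $\sum_{S}4^{c(S)}$, where $S$ ranges over the spanning subgraphs of $G$ all of whose components are odd unicyclic and $c(S)$ denotes the number of components of $S$, and then to argue combinatorially that a connected non-bipartite graph on $n$ vertices with more than $n$ edges admits at least two such $S$; the rank-one perturbation route above seems to me cleaner and avoids that case analysis.)
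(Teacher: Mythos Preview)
Your argument is correct. The paper itself does not prove this lemma at all: it is quoted verbatim from the cited reference \cite{d4} and used as a black box, so there is no in-paper proof to compare with. What you have written is a complete, self-contained proof.

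A brief remark on approach. The route most often taken in the literature (and likely the one in the cited source) is the Cauchy--Binet expansion you mention parenthetically at the end: from $Q(G)=NN^{\top}$ one gets
\[
\det Q(G)=\sum_{\substack{S\subseteq E(G)\\|S|=n}}(\det N_{S})^{2},
\]
and each nonzero summand equals $4^{c(S)}$, the term $c(S)=1$ occurring precisely for spanning odd-unicyclic subgraphs. Your rank-one perturbation argument replaces the combinatorial count ``at least two spanning odd-unicyclic subgraphs once $|E|>n$'' by the analytic monotonicity $\det Q(G')>\det Q(G)$ via the matrix determinant lemma, which is arguably cleaner and avoids any case analysis. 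Both yield the same intermediate fact that every connected non-bipartite graph $C$ satisfies $\det Q(C)\ge 4$, from which the product decomposition over components finishes the job. The one point worth making explicit in your write-up is that each graph in the chain from $H_{0}$ up to $C$ contains the odd cycle of $H_{0}$ and hence stays non-bipartite, so $Q$ remains positive definite throughout and the monotonicity step is legitimately iterable; you use this implicitly but do not quite say it.
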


\begin{lemma}[\cite{dh}]\label{dh}
The  graph $G_{p,n,q}$ is determined by its Laplacian spectrum.
\end{lemma}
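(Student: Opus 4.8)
The plan is to let $H$ be an arbitrary graph $L$-cospectral with $G:=G_{p,n,q}$ and to prove $H\cong G$. I would first read off the invariants carried by the Laplacian spectrum: the order $N=n+p+q$; the size $m=\tfrac{1}{2}\sum_i\mu_i(G)=N-1$; and the number of connected components, which is the multiplicity of the eigenvalue $0$ and hence equals $1$. A connected graph on $N$ vertices with $N-1$ edges is a tree, so $H$ is a tree (consistent with Lemma~\ref{pro}, since then $p^-(H)=N\tau(H)=N$). Thus it suffices to prove that a tree $H$ which is $L$-cospectral with $G_{p,n,q}$ must be isomorphic to it.

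Next I would pin down the coarse structure of $H$. From $\mathrm{tr}(L^2)=\sum_i d_i^2+2m$ and $\mathrm{tr}(L^3)=\sum_i d_i^3+3\sum_i d_i^2$ (the triangle term vanishes on trees), the power sums $\sum_i d_i^2$ and $\sum_i d_i^3$ are $L$-spectral invariants; the degree multiset of $G_{p,n,q}$ uses only the values $1,2,p+1,q+1$ (with multiplicities $p+q$, $n-2$, $1$, $1$ in the main range $p\ge q\ge 2$, $n\ge 2$; the degenerate cases $p=1$ or $q=1$, in which $G_{p,n,q}$ is a path or a broom, are handled by known results), so these identities together with $\sum_i d_i=2(N-1)$ force $H$ to have the same degree sequence as $G$. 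A tree with exactly two vertices of degree $\ge 3$ is obtained from the path joining those two vertices by attaching pendant paths at its ends; thus $H$ arises from a $u$--$w$ path, with $u,w$ the branch vertices of degrees $p+1,q+1$, by hanging $p$ pendant paths at $u$ and $q$ at $w$, the $n-2$ degree-$2$ vertices being distributed between the $u$--$w$ path and the interiors of the pendant paths. Here $G_{p,n,q}$ is precisely the member in which \emph{all} $n-2$ degree-$2$ vertices lie on the $u$--$w$ path. For $n\le 4$ this is forced at once by Lemma~\ref{mull}: $m^-_H(1)=m^-_G(1)$ equals $N-4$ when $n=2$ and $N-5$ when $n\in\{3,4\}$, so $H$ is a double star (resp.\ some $G_{p',3,q'}$ or $G_{p',4,q'}$), and the already-determined number of degree-$2$ vertices and value of $\sum_i d_i^2$ identify which one and with what branch degrees, whence $H\cong G$. (Lemma~\ref{p} applied to $K_{1,p+1}\subseteq G$ gives $\mu_1(G)=\gamma_1(G)>p+2$, which with the Anderson--Morley bound $\mu_1(T)\le\max_{uv\in E(T)}(d_u+d_v)$ re-derives $p$, and symmetrically $q$, from the spectrum.)

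The main obstacle is the case $n\ge 5$, where $m^-_G(1)$ depends delicately on $n\bmod 3$ and on $p,q$ so Lemma~\ref{mull} no longer pins the shape, and one must exclude the members of the above family whose degree-$2$ vertices are not all on the $u$--$w$ path. I would handle this by a pendant-path peeling induction: using the standard three-term recursion for the Laplacian characteristic polynomial of a graph carrying a pendant path, one argues that an $L$-cospectral mate $H$ of $G_{p,n,q}$ must contain a length-matching internal path of degree-$2$ vertices between its branch vertices, and that deleting one such vertex from both $H$ and $G$ preserves $L$-cospectrality and decreases $n$ by one, so the induction descends to the already-settled case $n=4$; certifying at each stage that $H$ genuinely has this internal path --- rather than long pendant paths at $u$ or $w$ --- is the crux, and is where the degree-sequence data, Lemma~\ref{p}, and a comparison of algebraic connectivities $\mu_{N-1}(H)=\mu_{N-1}(G)$ come in. An equivalent, and arguably cleaner, reformulation (useful at least as a cross-check) passes to line graphs: trees are bipartite, so by Lemma~\ref{lq} $Spec_{L}(T)=Spec_{Q}(T)$, and the nonzero part of $Spec_{Q}(T)$ is $2+Spec_{A}(\mathcal L(T))$; hence $H$ and $G$ are $L$-cospectral exactly when $\mathcal L(H)$ and $\mathcal L(G_{p,n,q})$ are $A$-cospectral, and $\mathcal L(G_{p,n,q})$ is the ``double kite'' obtained by joining a vertex of a clique $K_{p+1}$ to a vertex of a clique $K_{q+1}$ by a path, so the lemma becomes the assertion that this double kite is adjacency-determined among line graphs of trees, after which Whitney's theorem recovers the tree itself.
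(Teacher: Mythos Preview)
The paper does not prove this lemma; it is quoted from \cite{dh} as a known result. So there is no in-paper argument to compare against, and the question becomes whether your sketch stands on its own.

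It does not, for two reasons. First, the step ``these identities\ldots\ force $H$ to have the same degree sequence as $G$'' is a non sequitur: knowing that $G$'s degrees lie in $\{1,2,p+1,q+1\}$ says nothing a priori about where $H$'s degrees lie, and three power sums do not determine an arbitrary multiset of positive integers. You would need an argument specific to this family, and the Cauchy--Schwarz rigidity that rescues the double-star case (where the only nonzero ``excess degrees'' $d_i-1$ are equal) disappears once degree-$2$ vertices are present. Second, and more seriously, for $n\ge5$ you yourself identify ``certifying at each stage that $H$ genuinely has this internal path'' as ``the crux'' and then do not carry it out. The three-term recursion tells you how the characteristic polynomial changes when \emph{you} peel a pendant path from a graph you already understand; it gives no mechanism for showing that an unknown $L$-cospectral mate must possess a matching internal path to peel. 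Establishing precisely this is the substance of the theorem in \cite{dh}, and listing ``degree-sequence data, Lemma~\ref{p}, and a comparison of algebraic connectivities'' without saying how they combine is a plan, not a proof. The line-graph reformulation via Lemma~\ref{lq} is a correct and pleasant restatement, but showing that the double kite $\mathcal{L}(G_{p,n,q})$ is $A$-determined among line graphs of trees is no easier than the original assertion, so it does not close the gap either.
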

\begin{lemma}[\cite{dsi}]\label{dsi}
Let $T$ be a DLS tree of order $n$. Then $T\cup rK_1$ is DLS. 
\end{lemma}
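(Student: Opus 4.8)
The plan is to take an arbitrary graph $H$ that is $L$-cospectral with $T\cup rK_1$ and prove $H\cong T\cup rK_1$. First I would read off the standard Laplacian invariants from the spectrum: $H$ has $n+r$ vertices, has $n-1$ edges (half the sum of the Laplacian eigenvalues), and --- since the multiplicity of the Laplacian eigenvalue $0$ equals the number of connected components --- has exactly $r+1$ components. A graph on $N$ vertices with $c$ components has at least $N-c$ edges, with equality precisely when it is a forest; here $N-c=(n+r)-(r+1)=n-1$ equals the edge count, so $H$ is a forest, that is, a disjoint union of $r+1$ trees.

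Next comes the counting step. Write $H=T_1\cup\cdots\cup T_s\cup(r+1-s)K_1$, where $T_1,\dots,T_s$ are the tree components with at least two vertices, of orders $k_1,\dots,k_s\ge 2$ (we may assume $n\ge 2$, since for $n=1$ we have $T=K_1$ and the claim is trivial; then $s\ge 1$). Counting vertices gives $k_1+\cdots+k_s=n+s-1$. For the product of the nonzero Laplacian eigenvalues I would invoke the matrix--tree identity of Lemma~\ref{pro}: a tree is connected and bipartite with a unique spanning tree, so $p^{-}(T_i)=k_i$ and $p^{-}(T)=n$; since a $K_1$ contributes no nonzero eigenvalue, $p^{-}(H)=k_1\cdots k_s$, and $L$-cospectrality gives $k_1\cdots k_s=p^{-}(T\cup rK_1)=p^{-}(T)=n$.

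The decisive step is then an elementary inequality: for integers $k_1,\dots,k_s\ge 2$,
\[
k_1\cdots k_s\ \ge\ (k_1+\cdots+k_s)-(s-1),
\]
with strict inequality as soon as $s\ge 2$. This follows by induction on $s$, the inductive step collapsing to $(k_{s+1}-1)\sum_{i\le s}(k_i-1)\ge 0$, which is strict whenever $s\ge 1$. Substituting $k_1+\cdots+k_s=n+s-1$ makes the right-hand side exactly $n$; since also $k_1\cdots k_s=n$, we cannot have $s\ge 2$. Therefore $s=1$, so $H=T_1\cup rK_1$ with $T_1$ a tree on $n$ vertices whose nonzero Laplacian eigenvalues coincide with those of $T$; hence $T_1$ is $L$-cospectral with $T$, and since $T$ is DLS, $T_1\cong T$ and $H\cong T\cup rK_1$, as required.

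I expect the main obstacle to be identifying the right rigid invariant --- the product of the nonzero Laplacian eigenvalues, which by Lemma~\ref{pro} equals the number of vertices of each individual tree component --- and recognizing that it is precisely what controls how the $n-1$ edges can be spread over the $r+1$ forest components. Once that is in place, the remaining ingredients (the vertex count, the forest structure, and the two-line induction for the inequality) are routine, with only the trivial boundary case $n=1$ to note separately.
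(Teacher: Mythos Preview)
The paper does not actually prove this lemma; it is quoted verbatim from the reference \cite{dsi} and used as a black box in the proof of Theorem~\ref{t1}. So there is no ``paper's own proof'' to compare against.

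That said, your argument is correct and self-contained. The chain of deductions --- $r+1$ components from the multiplicity of $0$, exactly $n-1$ edges forcing $H$ to be a forest, then using Lemma~\ref{pro} to turn the product of nonzero Laplacian eigenvalues into $\prod k_i = n$ --- is clean, and the elementary inequality $\prod_{i}k_i \ge \sum_i k_i - (s-1)$ for $k_i\ge 2$ (equivalently $\prod(1+a_i)\ge 1+\sum a_i$ with $a_i=k_i-1\ge 1$) does exactly pin down $s=1$. One cosmetic point: your phrase ``which is strict whenever $s\ge 1$'' in the inductive step should read $s\ge 2$ (for $s=1$ the inequality is an equality, as you in fact need), but the surrounding text already states the correct strictness condition, so this is just a slip in the parenthetical.
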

\begin{lemma}[\cite{Dam}]\label{dlc}
 From the adjacency spectrum, the Laplacian spectrum and from the signless Laplacian spectrum of $G$, the following invariants of $G$ can be obtained.\\
$(1)$ The number of vertices.\\
$(2)$ The number of edges.\\
From the Laplacian spectrum, the following invariant of $G$ can be deduced.\\
$(3)$ The number of components.\\
From the signless Laplacian spectrum, the following invariant of $G$ can be obtained.\\ 
$(4)$ The number of bipartite components.
\end{lemma}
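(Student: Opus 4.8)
The plan is to verify each of the four items by extracting it directly from the appropriate spectrum, using only elementary linear algebra. For (1), the number of vertices of $G$ is the order of any of the matrices $A(G)$, $L(G)$, $Q(G)$, i.e.\ the number of eigenvalues counted with multiplicity, which is read off immediately from the spectrum. For (2), I would use trace identities: since the $(i,i)$ entry of $A(G)^2$ is the number of closed walks of length two from $v_i$, one has $\sum_i \lambda_i(G)^2=\operatorname{tr}\!\big(A(G)^2\big)=\sum_i d_i(G)=2m$, and since $\operatorname{tr}(L(G))=\operatorname{tr}(Q(G))=\sum_i d_i(G)$, also $\sum_i \mu_i(G)=\sum_i \gamma_i(G)=2m$; in each case $m$ is recovered from the spectrum.

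For (3) and (4) the key observation is the pair of quadratic-form identities
$$
x^{\top}L(G)x=\sum_{v_iv_j\in E(G)}(x_i-x_j)^2,\qquad
x^{\top}Q(G)x=\sum_{v_iv_j\in E(G)}(x_i+x_j)^2,
$$
which show that $L(G)$ and $Q(G)$ are positive semidefinite, so in both cases the multiplicity of the eigenvalue $0$ equals the dimension of the kernel. A vector $x$ lies in $\ker L(G)$ iff $x_i=x_j$ whenever $v_iv_j\in E(G)$, i.e.\ iff $x$ is constant on each connected component; hence $\dim\ker L(G)=m^{-}_{_G}(0)$ equals the number of components of $G$, which is determined by the Laplacian spectrum. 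Similarly $x\in\ker Q(G)$ iff $x_i=-x_j$ for every edge $v_iv_j$; restricted to a connected component, a nonzero such $x$ exists precisely when that component is bipartite (assign $+1$ to one colour class and $-1$ to the other, with an isolated vertex counting as bipartite), while on a non-bipartite component $x$ must vanish. Therefore $\dim\ker Q(G)=m^{+}_{_G}(0)$ equals the number of bipartite components of $G$, which is determined by the signless Laplacian spectrum.

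There is no genuine obstacle in this argument; the only point deserving a line of care is that on a connected bipartite component the kernel of $Q(G)$ is exactly one-dimensional rather than larger, which follows because connectivity propagates the constraint $x_i=-x_j$ along edges, so $x$ on that component is pinned down by its value at a single vertex. Combining this with the fact that the multiplicities $m^{-}_{_G}(0)$ and $m^{+}_{_G}(0)$ are by definition read from the Laplacian and signless Laplacian spectra respectively completes the proof.
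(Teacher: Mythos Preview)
Your proposal is correct and gives the standard argument for each item. The paper does not supply its own proof of this lemma; it is quoted from the survey \cite{Dam} without justification, so there is nothing to compare against beyond noting that your quadratic-form characterisations of $\ker L(G)$ and $\ker Q(G)$ are exactly the usual route to (3) and (4).
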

\begin{lemma}[\cite{dsla}]\label{dlsa}
	Let $G$ be a connected graph with $n \ge 3$ vertices.  Then
	$\mu_{2}(G) \ge d_{2}(G)$.
	\end{lemma}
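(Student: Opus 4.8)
The plan is to bound $\mu_2(G)$ from below via the Courant--Fischer identity $\mu_2(G)=\max_{\dim S=2}\min_{0\neq x\in S}x^{\top}L(G)x/x^{\top}x$, applied to a $2$-dimensional subspace $S$ built from the two vertices of largest degree. Write $d_1=d_1(G)$ and $d_2=d_2(G)$, and fix vertices $v_1,v_2$ with $d(v_1)=d_1$ and $d(v_2)=d_2$. Since $G$ is connected with $n\geq3$, it has a vertex of degree at least $2$, so $d_1\geq2$. The case $v_1\not\sim v_2$ is instantaneous: the principal submatrix of $L(G)$ on the rows and columns indexed by $\{v_1,v_2\}$ is $\operatorname{diag}(d_1,d_2)$, whose second largest eigenvalue is $d_2$, so Cauchy interlacing gives $\mu_2(G)\geq d_2$. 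So assume henceforth that $v_1\sim v_2$.

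Put $t=|N(v_1)\cap N(v_2)|$ and let $H$ be the subgraph of $G$ keeping exactly the edges incident with $v_1$ or $v_2$; thus $d_H(v_1)=d_1$, $d_H(v_2)=d_2$, the $t$ common neighbours have $H$-degree $2$, and every other non-isolated vertex of $H$ is pendant. Consider the two vectors
\[
\phi=-d_1\,e_{v_1}+\sum_{u\sim v_1}e_u,\qquad \psi=-d_2\,e_{v_2}+\sum_{u\sim v_2}e_u,
\]
the leading eigenvectors of the Laplacians of the two stars; they are linearly independent because $d_1d_2\neq1$, so $S=\operatorname{span}\{\phi,\psi\}$ is $2$-dimensional. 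Since $x^{\top}L(G)x=\sum_{ij\in E(G)}(x_i-x_j)^2\geq\sum_{ij\in E(H)}(x_i-x_j)^2=x^{\top}L(H)x$ for every $x$, Courant--Fischer reduces the lemma to the inequality $x^{\top}L(H)x\geq d_2\,x^{\top}x$ for all $x\in S$; writing $x=\alpha\phi+\beta\psi$, this is exactly the positive semidefiniteness of the $2\times2$ symmetric matrix with diagonal entries $\phi^{\top}L(H)\phi-d_2\phi^{\top}\phi$ and $\psi^{\top}L(H)\psi-d_2\psi^{\top}\psi$ and off-diagonal entry $\phi^{\top}L(H)\psi-d_2\phi^{\top}\psi$.

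The six inner products read off directly from the structure of $H$: $\phi^{\top}L(H)\phi=d_1(d_1+1)^2+(d_2-1-t)$, $\psi^{\top}L(H)\psi=d_2(d_2+1)^2+(d_1-1-t)$, $\phi^{\top}L(H)\psi=-(d_1+1)(d_1+d_2-t)-(d_2+1)(d_2-1-t)$, $\phi^{\top}\phi=d_1(d_1+1)$, $\psi^{\top}\psi=d_2(d_2+1)$, and $\phi^{\top}\psi=-(d_1+d_2-t)$. From $d_1\geq d_2$ and $0\leq t\leq d_2-1$ the two diagonal entries are positive, so the whole proof collapses to one scalar inequality, namely (product of the two diagonal entries) $\geq$ (off-diagonal entry)$^2$; after the substitutions $\delta=d_1-d_2+1\geq1$ and $p=d_2-1-t\geq0$ this becomes a polynomial inequality in $d_2,\delta,p$, settled by expanding and regrouping. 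That final verification is the one genuinely computational step and the place where I expect any difficulty; everything before it is soft. Finally, note that equality $\mu_2(G)=d_2$ really does occur --- for $P_4$, for stars, and more --- so one should not expect the last inequality to carry slack.
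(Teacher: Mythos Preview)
The paper does not supply a proof of this lemma; it is quoted from the literature (reference \cite{dsla}) and used as a black box, so there is no ``paper's own proof'' to compare against.

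As to your proposal itself: the overall architecture is correct. The non-adjacent case is settled cleanly by $2\times2$ Cauchy interlacing, and in the adjacent case your choice of the star eigenvectors $\phi,\psi$, the reduction via $L(G)\succeq L(H)$ to the auxiliary graph $H$, and the six inner-product computations are all right. The gap is exactly where you flag it: you stop short of verifying the determinant inequality
\[
\bigl(\phi^{\top}L(H)\phi-d_2\phi^{\top}\phi\bigr)\bigl(\psi^{\top}L(H)\psi-d_2\psi^{\top}\psi\bigr)\ \ge\ \bigl(\phi^{\top}L(H)\psi-d_2\phi^{\top}\psi\bigr)^{2}.
\]
This is not a formality. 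After your substitution $\delta=d_1-d_2+1$, $p=d_2-1-t$, the left minus right side is a \emph{concave} quadratic in $p$ (the coefficient of $p^{2}$ is $1-(d_1+2)^{2}<0$), so checking a single interior point is not enough: you must verify nonnegativity at both endpoints $p=0$ and $p=d_2-1$. Each endpoint is itself a polynomial inequality in $d_2,\delta$ that vanishes on a nontrivial locus --- for instance, one gets equality whenever $d_2=1$ (with $p=0$ forced), and also whenever $d_1=d_2$ and $t=0$ (the double-star case, where indeed $\mu_2(H)=d_2$). So the final inequality is tight in several directions and will not fall out of a crude estimate; you really do have to expand and factor carefully. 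Until that computation is written out, the argument is a plan, not a proof.
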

\begin{lemma}[\cite{algebraic}]\label{dl1}
Let $T$ be a tree of order $n$. Then
  $\mu_{n-1}(T)\le 1$ and the equality holds if and only if $T\cong S_{n}$.
\end{lemma}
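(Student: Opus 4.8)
The plan is to argue directly from the variational characterization of the algebraic connectivity, $\mu_{n-1}(T)=\min\bigl\{x^{\top}L(T)x/x^{\top}x:\ 0\neq x\perp\mathbf 1\bigr\}$, where $\mathbf 1$ is the all-ones vector, using one very cheap test vector. We may assume $n\ge 3$, since for $n\le 2$ the only tree is $S_n$. The ``if'' part of the equality statement is immediate: the star has $Spec_{L}(S_n)=\{n,\,1^{(n-2)},\,0\}$, so $\mu_{n-1}(S_n)=1$.

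For the inequality together with the ``only if'' part, suppose $T\not\cong S_n$. Since a tree of diameter at most $2$ is a star, $T$ has diameter at least $3$; taking the two endpoints of a longest path gives leaves $a,b$ with $\mathrm{dist}_T(a,b)\ge 3$. Let $x$ be the vector with $x_a=1$, $x_b=-1$ and $x_w=0$ otherwise. Then $x\perp\mathbf 1$, and since $a,b$ are pendant vertices only the two pendant edges at $a$ and at $b$ contribute to $x^{\top}L(T)x=\sum_{ij\in E(T)}(x_i-x_j)^2$, which therefore equals $2$; as $x^{\top}x=2$ as well, we get $\mu_{n-1}(T)\le 1$. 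Applied with any two leaves, the same computation also yields $\mu_{n-1}(T)\le 1$ for every tree.

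It remains to rule out equality. If $\mu_{n-1}(T)=1$, then $x$ attains the minimum of the Rayleigh quotient on $\mathbf 1^{\perp}$ and is hence an eigenvector, $L(T)x=x$. Let $u$ be the neighbour of the leaf $a$. Because $\mathrm{dist}_T(a,b)\ge 3$ we have $u\notin\{a,b\}$, so $x_u=0$, and $u$ is adjacent to $a$ but not to $b$; reading the eigenvalue equation at $u$ gives $(L(T)x)_u=-\sum_{w\sim u}x_w=-x_a=-1\neq 0=x_u$, a contradiction. Hence $\mu_{n-1}(T)<1$ whenever $T\not\cong S_n$. The only delicate point is this last step: the test vector already meets the bound, so one must show it cannot be a genuine eigenvector unless the two chosen leaves lie at distance exactly $2$ (which forces $T$ to be a star), and evaluating $L(T)x=x$ at the right vertex does precisely that. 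An alternative is to pass to the complement via $\mu_{n-1}(T)=n-\mu_1(\overline T)$ and combine $\mu_1(H)\ge\Delta(H)+1$ with $\Delta(\overline T)=n-2$ (a tree has a leaf), but that route also requires analysing the equality case of $\mu_1(H)\ge\Delta(H)+1$, so the direct argument above is preferable.
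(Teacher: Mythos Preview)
The paper does not prove this lemma; it is quoted from the literature (reference \cite{algebraic}), so there is no in-paper argument to compare against. Assessed on its own, your proof is correct and self-contained: the test vector supported on two leaves gives the Rayleigh bound $1$, and when the leaves are at distance at least $3$ you correctly rule out equality by observing that a minimiser of the Rayleigh quotient on $\mathbf 1^{\perp}$ must be an eigenvector, and then reading the eigen-equation $L(T)x=x$ at the internal neighbour $u$ of $a$ to obtain $(L(T)x)_u=-x_a=-1\neq 0=x_u$. This is precisely the classical route (essentially Fiedler's), and the ``delicate point'' you flag is handled cleanly.

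One very small remark: the sentence ``We may assume $n\ge 3$, since for $n\le 2$ the only tree is $S_n$'' does not actually dispose of the case $n=2$, because $\mu_{1}(K_2)=2>1$; the inequality $\mu_{n-1}(T)\le 1$ simply fails there. This is a defect of the lemma as stated (it is tacitly for $n\ge 3$) rather than of your argument.
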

\begin{lemma}[\cite{lst,siam1}]\label{tree}
Let $T$ be a tree of order $n$. Then  $\mu_{1}(T)=n$ if and only if $T\cong S_{n}$.	
\end{lemma}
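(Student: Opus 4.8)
The plan is to read off the characterization from the behaviour of the complement $\overline{T}$. The forward implication I would dispatch first: if $T\cong S_{n}=K_{1,n-1}$, then $\overline{S_{n}}=K_{n-1}\cup K_{1}$ is disconnected, and a short computation (passing to the complement, or directly diagonalising the Laplacian of $K_{1,n-1}$) gives Laplacian spectrum $\{\,n,\ 1^{(n-2)},\ 0\,\}$, so $\mu_{1}(S_{n})=n$.

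For the converse I would use the standard relation $L(G)+L(\overline{G})=nI-J$, where $J$ is the all-ones matrix. Since the all-ones vector is a $0$-eigenvector of every Laplacian and $J$ annihilates its orthogonal complement, the Laplacian eigenvalues of $\overline{G}$ are exactly $0$ together with the numbers $n-\mu_{i}(G)$ for $i=1,\dots,n-1$. Consequently the multiplicity of $0$ as an eigenvalue of $\overline{G}$ — that is, the number of connected components of $\overline{G}$ — equals $1+\#\{\,i\le n-1:\mu_{i}(G)=n\,\}$. Applying this with $G=T$ (which is connected, so $\mu_{1}(T)\le n$ by Lemma~\ref{l2}), the hypothesis $\mu_{1}(T)=n$ forces $\overline{T}$ to have at least two components.

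A counting argument then finishes things. If $\overline{T}$ is disconnected, partition $V(T)=A\cup B$ with $A,B\neq\emptyset$ and no edge of $\overline{T}$ joining $A$ to $B$; then every $A$–$B$ pair is an edge of $T$, so $K_{|A|,|B|}$ is a subgraph of $T$. Comparing edge counts, $|A|\,|B|\le |E(T)|=n-1=|A|+|B|-1$, i.e. $(|A|-1)(|B|-1)\le 0$, which with $|A|,|B|\ge 1$ forces $|A|=1$ or $|B|=1$. Taking $A=\{v\}$, the vertex $v$ is adjacent in $T$ to each of the remaining $n-1$ vertices, and since $T$ has only $n-1$ edges these are all of them, so $T\cong S_{n}$.

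The only step needing real care is the equality analysis behind Lemma~\ref{l2}, namely identifying the multiplicity of $0$ in $\overline{T}$ with its number of components; everything after that is purely combinatorial. Should one wish to avoid complements entirely, there is a self-contained alternative: from $\operatorname{tr}L(T)=2(n-1)$ and $\operatorname{tr}L(T)^{2}=\sum_{i}d_{i}(T)^{2}+2(n-1)$, the assumption $\mu_{1}(T)=n$ gives $\sum_{i=2}^{n-1}\mu_{i}(T)=n-2$ and hence, by Cauchy–Schwarz, $\sum_{i=2}^{n-1}\mu_{i}(T)^{2}\ge n-2$, which rearranges to $\sum_{i}d_{i}(T)^{2}\ge n(n-1)$; since the degree sequence of $S_{n}$ majorizes that of every tree on $n$ vertices one always has $\sum_{i}d_{i}(T)^{2}\le n(n-1)$ with equality only for $T\cong S_{n}$, so equality holds throughout and $T\cong S_{n}$.
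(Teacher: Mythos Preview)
Your argument is correct. The paper does not supply its own proof of this lemma; it merely cites it from the literature (\cite{lst,siam1}), so there is nothing to compare against. Both routes you outline are sound: the complement argument uses $L(T)+L(\overline{T})=nI-J$ to convert $\mu_{1}(T)=n$ into disconnectedness of $\overline{T}$, and the edge-count inequality $(|A|-1)(|B|-1)\le 0$ then pins down $T\cong S_{n}$; the trace alternative correctly combines Cauchy--Schwarz on $\mu_{2},\dots,\mu_{n-1}$ with the majorization bound $\sum_{i}d_{i}(T)^{2}\le (n-1)^{2}+(n-1)$ (equality only for the star, via Karamata applied to the strictly convex square). One stylistic remark: in the second paragraph you cite Lemma~\ref{l2} for $\mu_{1}(T)\le n$, which is fine, but your eigenvalue correspondence already yields this for free since every $n-\mu_{i}(T)$ must be a (nonnegative) Laplacian eigenvalue of $\overline{T}$.
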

\begin{lemma}[\cite{RBBbook},\cite{siam}]\label{siam}
	Suppose $T$ is a tree on n vertices. If $\mu>1$ is an integer Laplacian eigenvalue of $T$, then $\mu|n$ and $m^{-}_{T}(\mu)=1$.
	\end{lemma}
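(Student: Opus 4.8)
The plan is to treat the two conclusions separately, beginning with $\mu\mid n$, which is purely algebraic and needs nothing about the tree structure beyond connectedness. Since $L(T)$ has integer entries, its characteristic polynomial $\phi(x)=\det(xI-L(T))$ lies in $\mathbb{Z}[x]$, is monic of degree $n$, and has $0$ as a simple root because $T$ is connected; hence $\phi(x)=x\,\psi(x)$ with $\psi\in\mathbb{Z}[x]$ monic of degree $n-1$ and $\psi(0)\neq 0$, and $|\psi(0)|$ is the product of the nonzero Laplacian eigenvalues of $T$, which by Lemma~\ref{pro} equals $n\,\tau(T)=n$ since a tree has a unique spanning tree. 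Now $\mu$ is an integer root of $\psi$ of multiplicity $m:=m^{-}_{T}(\mu)\ge 1$, so dividing $\psi$ by the monic integer polynomial $(x-\mu)^{m}$ (Gauss's lemma) yields $\psi(x)=(x-\mu)^{m}g(x)$ with $g\in\mathbb{Z}[x]$; since $\psi(0)\neq 0$ the integer $g(0)$ is nonzero, and $n=|\psi(0)|=\mu^{m}\,|g(0)|$ forces $\mu^{m}\mid n$, in particular $\mu\mid n$.

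For $m^{-}_{T}(\mu)=1$ the relation $\mu^{m}\mid n$ is not by itself enough, so I would instead establish the sharper fact that \emph{no nonzero $\mu$-eigenvector of $L(T)$ can vanish at a leaf of $T$}. Granting this, choose any leaf $v_{0}$ of $T$; the map sending a $\mu$-eigenvector $x$ to the scalar $x_{v_{0}}$ is an injective linear map from the $\mu$-eigenspace into $\mathbb{R}$, so the eigenspace is at most one-dimensional, and being nonzero it is exactly one-dimensional, i.e. $m^{-}_{T}(\mu)=1$.

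To prove the sharper fact, suppose $x$ is a $\mu$-eigenvector with $x_{v}=0$ at some leaf $v$, and let $w$ be the unique neighbour of $v$. The eigenvalue equation at $v$ reads $(1-\mu)x_{v}=x_{w}$, and since $\mu\neq 1$ this gives $x_{w}=0$ as well. A direct check of the eigenvalue equations shows that the restriction $x|_{T-v}$ satisfies $L(T-v)\,(x|_{T-v})=\mu\,(x|_{T-v})$: at every vertex other than $w$ the equations are literally unchanged, while at $w$ the only alteration produced by deleting the edge $vw$ is a correction term equal (up to sign) to $x_{v}-x_{w}$, which is $0$. Hence $x|_{T-v}$ is either a nonzero $\mu$-eigenvector of $L(T-v)$ or the zero vector. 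The first option is impossible, since $T-v$ is a tree on $n-1$ vertices, so the divisibility statement above applied to $T-v$ would give $\mu\mid n-1$, and with $\mu\mid n$ this forces $\mu\mid 1$, contradicting $\mu>1$. Therefore $x$ vanishes on all of $T-v$, so $x=c\,e_{v}$ for a scalar $c$; but then the equation at $w$ reads $-c=(L(T)x)_{w}=\mu\,x_{w}=0$, whence $x=0$, as required.

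The main obstacle, I expect, is recognising that the multiplicity claim should be routed through the leaf-nonvanishing property rather than pursued by divisibility alone, together with the observation that deleting a leaf at which an eigenvector vanishes yields an honest $\mu$-eigenvector on the smaller tree — which is precisely what lets the already-proved relation $\mu\mid n$ be reused on $T-v$ to rule out the bad case. The remaining steps are routine manipulations of the Laplacian eigenvalue equations.
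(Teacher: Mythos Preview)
The paper does not give its own proof of this lemma; it is quoted from the cited references without argument, so there is nothing in the paper to compare your proposal against. Your proof is correct and self-contained. The divisibility $\mu^{m}\mid n$ follows cleanly from the matrix--tree identity $\prod_{i<n}\mu_i(T)=n\tau(T)=n$ together with Gauss's lemma, and routing the multiplicity claim through the leaf-nonvanishing property of eigenvectors, then reusing the divisibility on $T-v$, is exactly the standard approach found in the sources the paper cites.

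Two cosmetic remarks. First, once you have assumed $x_v=0$, the relation $(1-\mu)x_v=x_w$ gives $x_w=0$ immediately, with no need to invoke $\mu\neq1$. Second, in the final step you have already assumed $x_v=0$, so the conclusion that $x$ vanishes on all of $T-v$ yields $x=0$ outright; the detour through $x=c\,e_v$ and the equation at $w$ is unnecessary (though not incorrect).
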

\begin{lemma}[\cite{kcdas}]\label{kcdas}
Let $G$ be a connected graph of order $n$. Then
$\gamma_{n}(G) < d_{n}$.	
\end{lemma}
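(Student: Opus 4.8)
The plan is to combine the Rayleigh-quotient bound on the least eigenvalue of $Q(G)$ with a positive-semidefiniteness argument that promotes the resulting $\le$ to a strict $<$. Write $\delta = d_n$ for the minimum degree of $G$ and fix a vertex $u$ with $d_{u}(G) = \delta$; let $\mathbf e_u$ be the standard basis vector supported at $u$. Since $A(G)$ has zero diagonal we have $\mathbf e_u^{\top} A(G)\mathbf e_u = 0$, hence $\mathbf e_u^{\top} Q(G)\mathbf e_u = \mathbf e_u^{\top} D_G\mathbf e_u = \delta$. Feeding $\mathbf e_u$ into the min--max characterisation of the smallest eigenvalue of the symmetric matrix $Q(G)$ then gives $\gamma_{n}(G) \le \delta = d_n$ at once.

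Next I would rule out equality. Suppose $\gamma_{n}(G) = \delta$ and set $M = Q(G) - \delta I$. All eigenvalues of $M$ are $\gamma_{i}(G) - \delta \ge 0$, so $M$ is positive semidefinite, while $\mathbf e_u^{\top} M \mathbf e_u = \delta - \delta = 0$. For a positive semidefinite matrix, any vector on which the quadratic form vanishes lies in the kernel, so $M\mathbf e_u = 0$, i.e. $Q(G)\mathbf e_u = \delta\,\mathbf e_u$. But for every neighbour $w$ of $u$ the $w$-th entry of $Q(G)\mathbf e_u$ equals $A(G)_{wu} = 1 \ne 0$, and since $G$ is connected with $n \ge 2$ vertices, $u$ has at least one neighbour — a contradiction. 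Therefore $\gamma_{n}(G) < d_n$.

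There is no serious obstacle here; the only points that need a word of care are the standard facts that equality in the Rayleigh bound forces $\mathbf e_u$ into the kernel of $Q(G)-\gamma_{n}(G)I$, and that the statement tacitly assumes $n \ge 2$ (for $K_1$ one has $\gamma_1 = d_1 = 0$, so strictness fails), which is implicit in ``connected graph'' in this context. An alternative route for the first paragraph is Cauchy (Haemers) interlacing applied to the $1\times 1$ principal submatrix of $Q(G)$ indexed by $u$, but the semidefiniteness formulation keeps the analysis of the equality case the most transparent.
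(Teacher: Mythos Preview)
The paper does not prove this lemma; it is simply quoted from \cite{kcdas} (Das) as a known bound, so there is no in-paper argument to compare against. Your Rayleigh-quotient argument is correct: the test vector $\mathbf e_u$ gives $\gamma_n(G)\le d_n$, and the positive-semidefiniteness of $Q(G)-\gamma_n(G)I$ forces $\mathbf e_u$ to be an eigenvector in the equality case, which is impossible once $u$ has a neighbour. Your remark that the strict inequality tacitly assumes $n\ge 2$ is well taken (and consistent with how the lemma is applied later in the paper, where $n\ge 4$).
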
 
\begin{lemma}[\cite{U2}]\label{U2}
Let $U$ be an unicyclic graph. If $\lambda>1$ is an integer signless Laplacian eigenvalue of $U$, then $m^{+}_{U}(\lambda)\le 2$. 
\end{lemma}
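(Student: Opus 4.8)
The plan is to reduce the claim to the tree case handled by Lemma~\ref{siam}, by breaking the unique cycle of $U$. Put $n=|V(U)|$, choose any edge $e=\{u,v\}$ lying on the unique cycle of $U$, and set $T:=U-e$; this is a spanning tree of $U$, in particular a tree on $n$ vertices. A direct check gives
\[
Q(U)=Q(T)+ww^{T},\qquad w=\mathbf{e}_{u}+\mathbf{e}_{v},
\]
because inserting $e$ raises $d_{u}$ and $d_{v}$ by one and switches on the $(u,v)$ and $(v,u)$ entries of the adjacency matrix, which is exactly the effect of adding the rank-one positive semidefinite matrix $ww^{T}$.

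Next I would apply the interlacing inequalities for a rank-one positive semidefinite perturbation: from $Q(U)=Q(T)+ww^{T}$ one obtains $\gamma_{k}(T)\le\gamma_{k}(U)\le\gamma_{k-1}(T)$ for every $k$ (with the usual convention $\gamma_{0}(T)=+\infty$). Suppose $\lambda$ occurs with multiplicity $m=m^{+}_{U}(\lambda)$ in $Q(U)$, say $\gamma_{k}(U)=\cdots=\gamma_{k+m-1}(U)=\lambda$. Then for each index $j$ with $k\le j\le k+m-2$ the bounds $\gamma_{j}(T)\le\gamma_{j}(U)=\lambda$ and $\gamma_{j}(T)\ge\gamma_{j+1}(U)=\lambda$ force $\gamma_{j}(T)=\lambda$. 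This produces $m-1$ indices at which $Q(T)$ has the eigenvalue $\lambda$, so $m^{+}_{U}(\lambda)\le m^{+}_{T}(\lambda)+1$.

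Finally, since $T$ is a tree it is bipartite, so Lemma~\ref{lq} gives $m^{+}_{T}(\lambda)=m^{-}_{T}(\lambda)$. As $\lambda>1$ is an integer, Lemma~\ref{siam} yields $m^{-}_{T}(\lambda)\le 1$ (it equals $1$ if $\lambda$ is a Laplacian eigenvalue of $T$, and $0$ otherwise). Combining the two displays, $m^{+}_{U}(\lambda)\le m^{-}_{T}(\lambda)+1\le 2$, which is the assertion.

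I do not expect a serious obstacle here: the only delicate points are the bookkeeping in the interlacing step (so as to land on the sharp bound ``$+1$'' rather than ``$+2$'') and the observation that $T$ has the same vertex count $n$ as $U$, which is what lets Lemma~\ref{siam} be used verbatim; one should also note that the conclusion is independent of which cycle edge is removed.
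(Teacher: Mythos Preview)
Your argument is correct. The identity $Q(U)=Q(T)+ww^{T}$ with $w=\mathbf{e}_{u}+\mathbf{e}_{v}$ is exactly right for the signless Laplacian (note the sign would be wrong for the ordinary Laplacian, where the perturbation is $+(\mathbf{e}_{u}-\mathbf{e}_{v})(\mathbf{e}_{u}-\mathbf{e}_{v})^{T}$, but here you are fine). The rank-one interlacing step is carried out accurately and indeed yields the sharp ``$+1$'' bound, and the reduction to Lemma~\ref{siam} via Lemma~\ref{lq} is clean. The only implicit assumption worth making explicit is that a unicyclic graph is, by definition, connected, so that $T=U-e$ is genuinely a single tree rather than a forest; otherwise Lemma~\ref{siam} would have to be applied componentwise and the bound could degrade.

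As for comparison: the paper does not supply its own proof of this lemma---it is quoted from the reference \cite{U2}---so there is nothing in the present paper to compare your argument against. Your edge-deletion/interlacing approach is in fact the standard route to this result and is essentially what one finds in the original source.
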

\begin{theorem}[\cite{mul}]\label{mul}
Let $G$ be a connected graph of order $n\ge 4$. Then $G$ has a $Q$-eigenvalue $c$ of
multiplicity $n -2$ if and only if $G$ is one of the graphs $K_n - e$, $S_n$, $K_{n/2,n/2}$, $\overline{K_3\cup S_4}$ and $\overline{K_1 \cup 2K_3}$.
\end{theorem}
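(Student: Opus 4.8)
The plan is to treat the two implications separately. For the ``if'' direction it suffices to compute the $Q$-spectrum of each of the five graphs and check that the relevant eigenvalue $c$ occurs with multiplicity exactly $n-2$. Since $S_n$ and $K_{n/2,n/2}$ are bipartite, Lemma~\ref{lq} gives $Spec_{Q}(S_n)=\{n,1^{(n-2)},0\}$ and $Spec_{Q}(K_{n/2,n/2})=\{n,(n/2)^{(n-2)},0\}$. For $K_n-e$, writing $e=uv$, the vector $e_u-e_v$ together with the vectors $e_w-e_{w'}$ for $w,w'$ in the set of $n-2$ mutually adjacent remaining vertices are $Q$-eigenvectors for the eigenvalue $n-2$, giving multiplicity $n-2$, and a $2\times 2$ reduction on the two remaining quotient directions produces the other two eigenvalues, both different from $n-2$ when $n\ge 4$. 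For the two graphs on $7$ vertices, which can be rewritten as $\overline{K_3\cup S_4}=\overline{K_3}\vee(K_3\cup K_1)$ and $\overline{K_1\cup 2K_3}=K_{1,3,3}$, one uses the equitable partition into the three natural vertex classes: the ``twin'' eigenvectors supported inside a class contribute four copies of the eigenvalue $4$, and the $3\times 3$ quotient matrix is readily checked to have $4$ as a simple eigenvalue, so $c=4$ has multiplicity $5=n-2$ in both cases. In each instance one also verifies the multiplicity is not $n-1$.

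For the ``only if'' direction, let $G$ be connected of order $n\ge 4$ with a $Q$-eigenvalue $c$ of multiplicity exactly $n-2$. As $Q(G)$ is nonnegative and irreducible, $\gamma_1(G)$ is simple, hence $\gamma_1(G)>c$ because $n-2\ge 2$. If $\gamma_n(G)=c$, then $Q(G)-cI$ is positive semidefinite of rank $2$; writing it as $PP^{T}$ with $P\in\mathbb{R}^{n\times 2}$, its rows $p_i$ satisfy $p_i^{T}p_j=A_{ij}\in\{0,1\}$ for $i\ne j$ and $\|p_i\|^2=d_i-c$, with every $p_i\ne 0$ by Lemma~\ref{kcdas}; a connected graph whose $Q(G)-cI$ is such a Gram matrix of nonzero plane vectors with pairwise inner products in $\{0,1\}$ is forced to be among $K_n$ or $K_n-e$-type configurations, which then fail to satisfy $\gamma_n=c$, so this case does not occur for $n\ge 4$. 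Thus $\gamma_1(G)>c>\gamma_n(G)$ and $Spec_{Q}(G)=\{\gamma_1,c^{(n-2)},\gamma_n\}$.

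Now $M:=Q(G)-cI$ has rank $2$ with one positive and one negative eigenvalue, so $M=\alpha xx^{T}-\beta yy^{T}$ with $\alpha,\beta>0$, $x\perp y$, and $x$ entrywise positive (the Perron vector). The $i$th row of $M$ depends only on the pair $(x_i,y_i)$, so the vertices split into $t$ classes with constant $(x_i,y_i)$; between two distinct classes $G$ is complete or empty, and in a class of size $\ge 2$ the common off-diagonal entry $\alpha x_i^2-\beta y_i^2=d_i-c$ lies in $\{0,1\}$, so such a class is a clique of vertices of degree $c+1$ or an independent set of vertices of degree $c$. Hence $G$ is a blow-up of a graph $H$ on $t$ vertices, every class of size $\ge 2$ contributing the eigenvalue $c$. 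Using $\gamma_1(G)>\Delta(G)$ (from Lemma~\ref{p} applied to the star $K_{1,\Delta(G)}\subseteq G$) and $\gamma_n(G)<\delta(G)$ (Lemma~\ref{kcdas}), one shows that no two twins of $G$ can have a common degree giving the eigenvalue $\gamma_1$ or $\gamma_n$; from this one deduces that the $t$ classes are pairwise inequivalent and a short argument bounds $t$, in fact $t\le 3$. Since the class partition is equitable, the spectrum of the $t\times t$ quotient matrix $B_H$ is contained in $\{\gamma_1,c,\gamma_n\}$; as the classes already supply $n-t$ copies of $c$, comparing multiplicities forces $B_H$ to have $c$ with multiplicity exactly $t-2$ and $\gamma_1,\gamma_n$ simple.

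It remains to run the finite case analysis. For $t=2$, connectedness forces $H=K_2$; the four ways of declaring its two classes clique or independent, subject to the degree conditions above, give exactly $K_{n/2,n/2}$, $K_n-e$ and $S_n$ (the ``both cliques'' option is $K_n$, whose $Q$-eigenvalue multiplicity is $n-1$, hence excluded). For $t=3$, connectedness forces $H\in\{P_3,K_3\}$; imposing that every class of size $\ge 2$ has degree $c$ or $c+1$, and that the $3\times 3$ matrix $B_H$ has $c$ as an eigenvalue of multiplicity $1$, yields a small system of equations in the three class sizes and $c$ whose only solutions are $K_{1,3,3}=\overline{K_1\cup 2K_3}$ (from $K_3$) and $\overline{K_3}\vee(K_3\cup K_1)=\overline{K_3\cup S_4}$ (from $P_3$), both forcing $n=7$. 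I expect the main obstacle to be precisely this last step together with the bound on $t$: one must convert ``$B_H$ has $c$ with multiplicity exactly $t-2$'' into explicit Diophantine relations among the class sizes and $c$, solve them completely, and make sure no blow-up is overlooked and that the near-miss families (for instance the complete multipartite graphs $K_{n_1,n_2,n_3}$ and graphs such as $K_2\vee\overline{K_{n-2}}$, whose middle $Q$-eigenvalue has multiplicity only $n-3$ once $n\ge 5$) are correctly discarded. The degenerate case $\gamma_n=c$ is the other delicate point; everything else is routine verification.
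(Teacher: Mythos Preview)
The paper does not prove this theorem at all: it is quoted verbatim from the reference \cite{mul} and used as a black box in Section~2 and Section~3. There is therefore no ``paper's own proof'' to compare your attempt with.

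As an independent sketch your outline is reasonable --- reducing to at most three distinct $Q$-eigenvalues, exploiting that $Q(G)-cI$ has rank~$2$, and passing to the equitable (twin-class) quotient is exactly the kind of argument used in the original source --- but two places are not yet proofs. First, the case $\gamma_n(G)=c$ is dismissed by a hand-wave (``is forced to be among $K_n$ or $K_n-e$-type configurations''); you need an actual classification of connected graphs for which $Q(G)-cI$ is a rank-$2$ positive semidefinite Gram matrix with off-diagonal entries in $\{0,1\}$, and then to check none of them has $\gamma_n=c$ with multiplicity $n-2$. Second, the crucial bound $t\le 3$ on the number of twin classes is asserted (``a short argument bounds $t$'') but not supplied; note that rank~$2$ of $Q(G)-cI$ by itself only says the rows lie in a plane, which does not immediately cap the number of distinct rows. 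You correctly flag both issues yourself at the end, so you already know where the real work lies; the remaining Diophantine case analysis for $t=2,3$ is routine once those two gaps are closed.
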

The following lemma gives the signless Laplacian spectrum of the double star graph $G(p, 2, q)$. 
\begin{lemma}\label{dls}
The signless Laplacian spectrum of the double star graph $G(p,2,q)$ consists of
\begin{enumerate}[a.]
\item	
1 with multiplicity $p+q-2$;
\item	
0 with multiplicity 1;
\item
the three roots of the polynomial $x^3+(-p-q-4)x^2+(pq+2p+2q+5)x-p-q-2$.
\end{enumerate}   	
\end{lemma}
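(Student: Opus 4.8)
\ni The plan is to use the fact that the double star $G(p,2,q)$ is a tree---hence bipartite, so that by Lemma~\ref{lq} its signless Laplacian spectrum coincides with its Laplacian spectrum---together with its obvious symmetry; I will argue directly with $Q=Q(G)$. Write the central edge of $P_2$ as $uv$, with pendant vertices $a_1,\dots,a_p$ attached at $u$ and $b_1,\dots,b_q$ attached at $v$, so that $n=p+q+2$, $d(u)=p+1$, $d(v)=q+1$ and $d(a_i)=d(b_j)=1$; I assume $p,q\ge1$, as otherwise the graph is a star and its $Q$-spectrum is covered by Theorem~\ref{mul}. Consider the equitable partition $\pi=\bigl\{\{u\},\{v\},\{a_1,\dots,a_p\},\{b_1,\dots,b_q\}\bigr\}$ and the orthogonal decomposition $\mathbb{R}^{n}=W\oplus W^{\perp}$ into the $4$-dimensional space $W$ of functions constant on each cell of $\pi$ and its complement $W^{\perp}$ of functions summing to zero on each cell; both are $Q$-invariant. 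A direct check shows that $Q$ restricts to the identity on $W^{\perp}$ (if $x\in W^{\perp}$ then $x_u=x_v=0$ and $\sum_i x_{a_i}=\sum_j x_{b_j}=0$, so $(Qx)_{a_i}=x_{a_i}+x_u=x_{a_i}$, $(Qx)_{b_j}=x_{b_j}$, and $(Qx)_u=(Qx)_v=0$), which gives the eigenvalue $1$ with multiplicity $\dim W^{\perp}=p+q-2$; concretely one may take, for $1\le i\le p-1$, the vector that is $1$ at $a_i$ and $-1$ at $a_{i+1}$, together with the analogous $q-1$ vectors on the $b_j$'s.

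\ni On $W$ the operator $Q$ is represented by the quotient matrix
\[
B=\begin{pmatrix} p+1 & 1 & p & 0\\ 1 & q+1 & 0 & q\\ 1 & 0 & 1 & 0\\ 0 & 1 & 0 & 1\end{pmatrix},
\]
and $B(1,-1,-1,1)^{T}=0$ (the vector $(1,-1,-1,1)$ being the $\pm1$ bipartition function of the tree), so the characteristic polynomial of $B$ equals $x\,g(x)$, where $g(x)=x^{3}-(\operatorname{tr}B)\,x^{2}+\sigma_{2}(B)\,x-\sigma_{3}(B)$ and $\sigma_{2}(B),\sigma_{3}(B)$ denote the sums of the $2\times2$ and $3\times3$ principal minors of $B$. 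A short computation gives $\operatorname{tr}B=p+q+4$, $\sigma_{2}(B)=pq+2p+2q+5$ and $\sigma_{3}(B)=p+q+2$, so $g(x)=x^{3}+(-p-q-4)x^{2}+(pq+2p+2q+5)x-(p+q+2)$, which is precisely the cubic in the statement; its three roots are the remaining $Q$-eigenvalues of $G(p,2,q)$.

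\ni Finally I would check that the stated multiplicities are exact: $g(0)=-(p+q+2)\ne0$, so the eigenvalue $0$ has multiplicity exactly $1$; $g(1)=pq\ne0$ since $p,q\ge1$, so $1$ is not an extra root of $g$ and hence has multiplicity exactly $p+q-2$; and $(p+q-2)+1+3=n$, so the whole spectrum is accounted for. I do not expect any genuine obstacle here: computing $B$, factoring out the trivial $0$-eigenvalue, and evaluating the two principal-minor sums are all routine, and the only point that needs care is verifying that $0$ and $1$ are not roots of $g$, so that the listed multiplicities are equalities rather than mere lower bounds. As consistency checks one may note that the three roots of $g$ sum to $2m-(p+q-2)=p+q+4$ and have product equal to $n\,\tau(G(p,2,q))=n$, in agreement with Lemma~\ref{pro}.
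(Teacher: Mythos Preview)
Your proof is correct. The paper states this lemma without proof, treating it as a routine computation, so there is no argument to compare against; your equitable-partition approach is the standard way to obtain such a result and all of your computations (the quotient matrix $B$, the kernel vector $(1,-1,-1,1)^{T}$, the principal-minor sums $\sigma_2=pq+2p+2q+5$ and $\sigma_3=p+q+2$, and the checks $g(0)\ne0$, $g(1)=pq\ne0$) are accurate.
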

\begin{rmk}\label{dr1}
Let $p\ge q$. Then from the above lemma, it can be seen that $\gamma_{1}(G_{p,2,q})\in (p+2,p+3),\gamma_{2}(G_{p,2,q})\in [q+1,q+2)$ and $\gamma_{2}(G_{p,2,q})= q+1$ if and only if $p=q$.	
\end{rmk}

\begin{theorem}\label{t1}
The graph $G_{p,2,q}\cup rS_{m}\cup  s K_{1}$ with $m\ge 2$  is $DLS$.	
\end{theorem}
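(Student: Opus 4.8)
I would first assume $p\ge q\ge 1$ (harmless by symmetry) and record $\mathrm{Spec}_{L}(H)$ explicitly. Since $G_{p,2,q}$ is bipartite, Lemma~\ref{dls} and Remark~\ref{dr1} give that its Laplacian spectrum consists of $0$; of $1$ with multiplicity $p+q-2$; and of the three roots $\gamma_{1}>\gamma_{2}>\gamma_{3}$ of the cubic of Lemma~\ref{dls}, with $\gamma_{1}\in(p+2,p+3)$, $\gamma_{2}\in[q+1,q+2)$, $\gamma_{3}\in(0,1)$, where $\gamma_{1},\gamma_{3}\notin\mathbb{Z}$ and $\gamma_{2}\in\mathbb{Z}$ (then $\gamma_{2}=q+1$) exactly when $p=q$. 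Thus $\mathrm{Spec}_{L}(H)$ has $0$ with multiplicity $c:=r+s+1$, $1$ with multiplicity $p+q-2+r(m-2)$, $m$ with multiplicity $\rho$, and $\gamma_{1},\gamma_{2},\gamma_{3}$; here $\rho=r$ unless $p=q=m-1$, when $\gamma_{2}=m$ and $\rho=r+1$. Let $H'$ be $L$-cospectral with $H$. By Lemma~\ref{dlc}, $H$ and $H'$ have the same order $n$, size $\varepsilon$ and number of components $c$; since $H$ is a forest ($\varepsilon=n-c$), so is $H'$, say $H'=\bigcup_{i}T_{i}$ with each $T_{i}$ a tree. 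Denote by $\sigma',\nu',t'$ the numbers of nontrivial star, nontrivial non-star, and $K_{1}$ components of $H'$ ($\sigma'+\nu'+t'=c$); since stars and $K_{1}$ have integral Laplacian spectra but $\gamma_{1}\notin\mathbb{Z}$, some non-star component carries $\gamma_{1}$, so $\nu'\ge 1$.

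Next I would prove the inequality $\sigma'+3\nu'\le r+3$. For a tree $T$ of order $N\ge 2$: the trace identity gives $m^{-}_{T}(1)\le N-2$, with equality iff $T\cong S_{N}$ (Lemma~\ref{tree}); the value $N-3$ cannot occur, for then the two remaining nonzero eigenvalues would be the roots $1,N$ of $t^{2}-(N+1)t+N$ (trace identity and Lemma~\ref{pro} for the product of the nonzero eigenvalues); and for a non-star tree $m^{-}_{T}(1)\le N-4$, with equality iff $T$ is a double star (Lemma~\ref{mull}(1)). Summing $m^{-}_{T_{i}}(1)$ over $H'$, with $\sum_{|V(T_{i})|\ge 2}|V(T_{i})|=n-t'$ and $t'=c-\sigma'-\nu'$, gives $m^{-}_{H'}(1)\le \varepsilon-\sigma'-3\nu'$; substituting $m^{-}_{H'}(1)=p+q-2+r(m-2)$ and $\varepsilon=p+q+1+r(m-1)$ yields the claim, and one notes it is an equality for $H$ ($\sigma=r$, $\nu=1$).

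Then I would locate the eigenvalue $m$. As $m\ge 2$, Lemma~\ref{siam} shows $m$ is a simple eigenvalue of each tree component of $H'$ possessing it and divides that component's order; hence exactly $\rho$ components carry $m$, each of order $\ge m$ (Lemma~\ref{l2}), with order exactly $m$ forcing the component to be $S_{m}$ (Lemma~\ref{tree}). If $j$ of these $\rho$ components equal $S_{m}$, then $\rho-j$ have order $\ge 2m$ and are not stars, so $\sigma'\ge j$, $\nu'\ge\max(1,\rho-j)$; plugging into the inequality above (with $\rho\in\{r,r+1\}$) forces $\rho-j\le 1$. When $\rho-j=0$ I would argue: an extra star would carry an integer eigenvalue $\ge 2$ --- necessarily $m$ (impossible, the $m$'s being accounted for) or $q+1$ with $p=q$ (then $\sigma'\ge\rho+1$, so the inequality forces $\nu'=0$, contradicting $\nu'\ge 1$) --- so $\rho=r$, $\sigma'=r$, $\nu'=1$, $t'=s$, i.e.\ $H'=rS_{m}\cup T_{0}\cup sK_{1}$ with $T_{0}$ the unique non-star. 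Removing the spectra of the $r$ copies of $S_{m}$ and $s$ copies of $K_{1}$ from $\mathrm{Spec}_{L}(H)$ leaves $\mathrm{Spec}_{L}(T_{0})=\mathrm{Spec}_{L}(G_{p,2,q})$, whence $T_{0}\cong G_{p,2,q}$ by Lemma~\ref{dh} and $H'\cong H$.

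The hard part is the case $\rho-j=1$, with a single large $m$-carrying component $T_{f}$, where $T_{f}$ could a priori absorb both $m$ and the non-integral pair $\{\gamma_{1},\gamma_{3}\}$, impersonating ``$G_{p,2,q}$ plus a star''. Here I would use the inequality once more (it excludes all extra stars except possibly one $S_{q+1}$, only when $p=q$) to get $\nu'=1$, so $T_{f}$ is the unique non-star and carries $\gamma_{1},\gamma_{3}$; removing all star and all $K_{1}$ components leaves $T_{f}$, and a vertex count gives $m^{-}_{T_{f}}(1)\in\{|V(T_{f})|-4,\,|V(T_{f})|-5\}$. If $m^{-}_{T_{f}}(1)=|V(T_{f})|-5$, Lemma~\ref{mull}(2) makes $T_{f}$ a double starlike tree $G_{p',3,q'}$ or $G_{p',4,q'}$, and Lemma~\ref{pro} gives $|V(T_{f})|=m\gamma_{1}\gamma_{2}\gamma_{3}=m(p+q+2)$; combined with $|V(T_{f})|=p+q+m+1$ from the count, this forces $m=1$, a contradiction. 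If $m^{-}_{T_{f}}(1)=|V(T_{f})|-4$, Lemma~\ref{mull}(1) makes $T_{f}$ a double star, necessarily with $p=q$; then $\gamma_{1}\gamma_{3}=(p+q+2)/\gamma_{2}=2$, while the count gives $|V(T_{f})|=q+m+1$ and Lemma~\ref{pro} gives $|V(T_{f})|=m\gamma_{1}\gamma_{3}=2m$, so $m=q+1$, i.e.\ $p=q=m-1$ (this also rules out the parallel sub-case $q+1\neq m$); then $\mathrm{Spec}_{L}(T_{f})=\mathrm{Spec}_{L}(G_{p,2,q})$, so $T_{f}\cong G_{p,2,q}$ by Lemma~\ref{dh}, the peeled-off stars are $r$ copies of $S_{m}$, and $H'\cong H$. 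In all cases $H'\cong H$, so $H$ is $DLS$. Beyond the two displayed bounds the reasoning is a careful but routine bookkeeping of multiplicities; the only genuinely delicate step is the treatment of the large component $T_{f}$, i.e.\ showing that it can only ever reproduce $G_{p,2,q}$ itself.
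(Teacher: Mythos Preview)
Your argument is correct, and it reaches the same conclusion as the paper, but by a genuinely different route.

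The paper's reduction is immediate: since $\gamma_{3}=\gamma_{p+q+1}(G_{p,2,q})\in(0,1)$ is the \emph{only} nonzero Laplacian eigenvalue of $H$ lying strictly below $1$, Lemma~\ref{dl1} (a non-star tree has algebraic connectivity strictly below $1$) forces $H'$ to have exactly one non-star component $H_{1}$; the rest are stars or $K_{1}$'s. One then checks four explicit possibilities for $\mathrm{Spec}_{L}(H_{1})$, using Lemma~\ref{mull} and the DLS results Lemmas~\ref{dh} and~\ref{dsi} to finish.

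You replace this single observation by a counting inequality: from the sharp bounds $m^{-}_{T}(1)\le N-2$ (stars) and $\le N-4$ (non-stars), summed over the components, you get $\sigma'+3\nu'\le r+3$; then, tracking where the integer eigenvalue $m$ can live via Lemma~\ref{siam}, you squeeze out $\nu'=1$. For the residual ``large'' non-star $T_{f}$ you use the tree product formula of Lemma~\ref{pro} (with $\gamma_{1}\gamma_{2}\gamma_{3}=p+q+2$) rather than the DLS lemma~\ref{dsi}: this yields $m=1$ in the $|V(T_{f})|-5$ branch and $m=q+1$ in the $|V(T_{f})|-4$ branch, both of which collapse to the desired conclusion.

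What each buys: the paper's argument is shorter and conceptually cleaner --- Lemma~\ref{dl1} does almost all the structural work in one line. Your approach is heavier on bookkeeping but more self-contained: it avoids Lemma~\ref{dsi} entirely and needs only the elementary identities (trace and product) together with Lemmas~\ref{mull}, \ref{dh}, \ref{siam}. The inequality $\sigma'+3\nu'\le r+3$ is a nice invariant that could be reused for other forests with a prescribed $1$-multiplicity.
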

\begin{proof}
	Let $H$ be a graph $L$-cospectral with $G_{p,2,q}\cup r S_{m}\cup s K_{1}$. By Lemma \ref{dlc} (3), $H$ has exactly $r+s+1$ components and so $H$ has at least $r(m-1)+p+q+1$ edges. Let $H_{1}, H_{2}, \ldots, H_{r+s+1}$ be the components of $H$. Since $G_{p,2,q}\cup r S_{m}\cup s K_{1}$ has   $r(m-1)+p+q+1$ edges by Lemma \ref{dlc} (2), it follows that $H_{1}, H_{2}, \ldots, H_{r+s+1}$ are all trees. Note that  $G_{p,2,q}\cup r S_{m}\cup s K_{1}$ has exactly one non-zero Laplacian eigenvalue strictly less than $1$ and it is $\gamma_{p+q+1}(G_{p,2,q})$. Thus by Lemma \ref{dl1}, we can assume that $H_{1}$ is a tree with $\gamma_{p+q+1}(G_{p,2,q})\in Spec_{_L}(H_{1})$, and all other components of $H$ are star graphs. Now from Lemma \ref{dls} and since $\gamma_{1}(G_{p,2,q})$ is never an integer by Remark \ref{dr1}, we have the following choices for $Spec_{_L}(H_{1})$.  \\
	$Spec_{_L}(H_{1})=\left\{ \begin{array}{c}
		\{\gamma_{1}(G_{p,2,q}),1,1,\ldots,1,\gamma_{p+q+1}(G_{p,2,q}),0\}\\\\
		\{\gamma_{1}(G_{p,2,q}),m,1,1,\ldots,1,\gamma_{p+q+1}(G_{p,2,q}),0\}\\\\	
		\{\gamma_{1}(G_{p,2,q}),\gamma_{2}(G_{p,2,q}),1,1,\ldots,1,\gamma_{p+q+1}(G_{p,2,q}),0\}\\\\
		\{\gamma_{1}(G_{p,2,q}),\gamma_{2}(G_{p,2,q}),m,,1,1,\ldots,1,\gamma_{p+q+1}(G_{p,2,q}),0\}.
		\end{array}\right.$ \\\\ Case I: If $Spec_{_L}(H_{1})=
		\{\gamma_{1}(G_{p,2,q}),1,1,\ldots,1,\gamma_{p+q+1}(G_{p,2,q}),0\}$. Then by Lemma \ref{dlsa}, $d_{2}(H_{1})=1$ and so $H_{1}$ is a star graph. This is impossible, since $\gamma_{p+q+1}(G_{p,2,q})<1$ and $\gamma_{p+q+1}(G_{p,2,q})\in Spec_{L}(H_{1})$.\\\\
		Case II:  $Spec_{_L}(H_{1})=\{\gamma_{1}(G_{p,2,q}),m,1,1,\ldots,1,\gamma_{p+q+1}(G_{p,2,q}),0\}$.\\\\
		Subcase I: Suppose $m>\gamma_{1}(G_{p,2,q})$. From Remark \ref{dr1}, the largest Laplacian eigenvalue of a double star graph is never an integer. Now, since $m^{-}_{H_{1}}(1)= |V(H_{1})|-4$, by Lemma \ref{mull}, it follows that $H_{1}$ must be a double star graph. Thus $H_{1}$ is a double star graph with integer $m$ as its largest Laplacian eigenvalue, a contradiction.\\\\
		Subcase II: Suppose  $m<\gamma_{1}(G_{p,2,q})$. Since $\gamma_{2}(G_{p,2,q})\notin Spec(H_{1})$, $\gamma_{2}(G_{p,2,q})$ must be the largest Laplacian eigenvalue of a star graph  and so $\gamma_{2}(G_{p,2,q})$ must be an integer. Hence by Remark \ref{dr1}, $\gamma_{2}(G_{p,2,q})=q+1$, $p=q$ and    $\floor{\gamma_{1}(G_{p,2,q})}=q+2$. Now, since $m^{-}_{H_{1}}(1)= |V(H_{1})|-4$ by Lemma \ref{mull}, it follows that $H_{1}$ must be a double star graph, i.e.,  $H_{1}\cong G_{p^{\prime},2,q^{\prime}}$, and also since $Spec_{_L}(H_{1})=\{\gamma_{1}(G_{p,2,q}),m,1,1,\ldots,1,\gamma_{p+q+1}(G_{p,2,q}),0\}$, we must have $q^{\prime}=m-1$, $p^{\prime}=q^{\prime}$ and $\floor{\gamma_{1}(G_{p,2,q})}=q^{\prime}+2$, by Remark \ref{dr1}. Thus $\floor{\gamma_{1}(G_{p,2,q})}=q^{\prime}+2=q+2$ and so $q=q^{\prime}$ and $p=p^{\prime}$. Hence $H_{1}\cong G_{p,2,q}$.\\\\
		Case III: If  $Spec_{_L}(H_{1})=\{\gamma_{1}(G_{p,2,q}),\gamma_{2}(G_{p,2,q}),1,1,\ldots,1,\gamma_{p+q+1}(G_{p,2,q}),0\}$, then $H_{1}$ and $G_{p,2,q}$ are $L$-cospectral, and also without loss of generality, we can assume that $H_{2}=H_{3}=\ldots=H_{r+1}=S_{m}$ and $H_{r+2}=\ldots=H_{s+r+1}=K_{1}$. Since $G_{p,2,q}$ is $DLS$ (see Lemma \ref{dh}), it follows that $H_{1}\cong G_{p,2,q}$ and so $H\cong G_{p,2,q}\cup r S_{m}\cup s K_{1}$.\\\\
		Case IV: If $Spec_{_L}(H_{1})=\{\gamma_{1}(G_{p,2,q}),\gamma_{2}(G_{p,2,q}),m,,1,1,\ldots,1,\gamma_{p+q+1}(G_{p,2,q}),0\}$, then the graphs $H_{1}\cup K_{1}$ and $G_{p,2,q}\cup S_{m}$ are $L$-copectral. Since $m^{-}_{H_{1}}(1)=|V(H_{1})|-5$, by Lemma \ref{mull}, $H_{1}\cong G_{1}$ where $G_{1}=G_{p_{1},3,q_{1}}$ or $G_{p_{2},4,q_{2}}$. Thus  $G_{1}\cup K_{1}$ and $G_{p,2,q}\cup S_{m}$ are $L$-copectral. This is impossible, because the graph $G_{1}$ is $DLS$, by Lemma \ref{dh} and so $G_{1}\cup K_{1}$ is $DLS$, by Lemma \ref{dsi}. This completes the proof.         
\end{proof}	

Using the fact that a graph $G$ is $DLS$ if and only if $\overline{G}$ is $DLS$, we obtain the following result from the above theorem.
\begin{theorem}
	The graph $\overline{G_{p,2,q}\cup rS_{m}\cup  s K_{1}}$ with $m\ge 2$  is $DLS$.	
\end{theorem}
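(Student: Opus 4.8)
The plan is to derive this theorem as a direct consequence of Theorem \ref{t1} together with the standard fact, already recorded in the introduction, that a graph is $DLS$ if and only if its complement is $DLS$. Concretely, I would first recall the reason for that equivalence: for a graph $F$ on $n$ vertices the Laplacian eigenvalues of $\overline{F}$ are $0$ together with the numbers $n-\mu_{n-i}(F)$ for $i=1,\dots,n-1$, so $Spec_{L}(\overline{F})$ is completely determined by $Spec_{L}(F)$, and conversely. In particular, since the number of vertices is an $L$-spectral invariant by Lemma \ref{dlc}(1), passing to complements is a well-defined operation on any graph $L$-cospectral with a fixed graph.

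Next I would run the short contrapositive argument. Suppose $H$ is $L$-cospectral with $\overline{G_{p,2,q}\cup rS_{m}\cup sK_{1}}$. Then $H$ has the same order, and applying the eigenvalue relation above to both sides shows that $\overline{H}$ is $L$-cospectral with $\overline{\overline{G_{p,2,q}\cup rS_{m}\cup sK_{1}}}=G_{p,2,q}\cup rS_{m}\cup sK_{1}$. By Theorem \ref{t1} the latter graph is $DLS$ for $m\ge 2$, hence $\overline{H}\cong G_{p,2,q}\cup rS_{m}\cup sK_{1}$, and taking complements once more gives $H\cong \overline{G_{p,2,q}\cup rS_{m}\cup sK_{1}}$. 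Therefore $\overline{G_{p,2,q}\cup rS_{m}\cup sK_{1}}$ is $DLS$.

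I do not expect any genuine obstacle here: the only points requiring a word of care are that the order of a cospectral mate is fixed (so that "the complement" is meaningful) and that graph isomorphism is preserved under complementation, both of which are immediate. Everything substantive has already been carried out in the proof of Theorem \ref{t1}, so this final result is essentially a one-line corollary stated separately for emphasis.
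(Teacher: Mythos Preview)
Your proposal is correct and follows exactly the same approach as the paper, which simply notes that the result follows from Theorem \ref{t1} together with the fact that a graph is $DLS$ if and only if its complement is $DLS$. You have merely spelled out the standard complement--Laplacian eigenvalue relation that underlies this equivalence, so there is nothing to add.
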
 
\begin{theorem}
Let $p+q$ and $m$ be odd positive integers. Then the  graph $G_{p,2,q}\cup rS_{m}\cup  s K_{1}$  is $DQS$.	
\end{theorem}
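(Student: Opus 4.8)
The plan is to leverage the already-established $DLS$ result (Theorem \ref{t1}) together with standard signless-Laplacian invariants. Write $G:=G_{p,2,q}\cup rS_{m}\cup sK_{1}$; it is a forest, so all of its components are bipartite trees and, by Lemma \ref{lq}, its $Q$-spectrum coincides with its $L$-spectrum. Let $n=(p+q+2)+rm+s$ be its order, and note that, being a forest with $b:=r+s+1$ components, $G$ has exactly $n-b$ edges. Suppose $H$ is $Q$-cospectral with $G$. By Lemma \ref{dlc}\,(1),(2),(4), the graph $H$ has $n$ vertices, $n-b$ edges, and precisely $b$ bipartite components (equivalently, the multiplicity of $0$ as a $Q$-eigenvalue of $H$ is $b$).

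Next I would pin down the shape of $H$ by an edge count. Split the components of $H$ into the $b$ bipartite ones and the remaining non-bipartite ones. A connected bipartite graph on $n_i$ vertices has at least $n_i-1$ edges, with equality iff it is a tree; a connected non-bipartite graph on $n_i$ vertices has at least $n_i$ edges, with equality iff it is an odd unicyclic graph. Summing these lower bounds over all components of $H$ yields at least $n-b$ edges, with equality iff every bipartite component of $H$ is a tree and every non-bipartite component is odd unicyclic. Since $H$ has exactly $n-b$ edges, equality must hold; hence the components of $H$ are trees together with (possibly) some odd unicyclic graphs. Let $t$ denote the number of odd unicyclic components.

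The crucial step is to show $t=0$ by a parity argument on the product $p^{+}(\cdot)$ of the non-zero $Q$-eigenvalues, which is a $Q$-spectral invariant and is multiplicative over disjoint unions. For $G$, Lemma \ref{pro} gives $p^{+}(G)=(p+q+2)\cdot m^{r}\cdot 1^{s}=(p+q+2)m^{r}$, which is \emph{odd} because $p+q$ and $m$ are odd. For $H$, each tree component on $n_i$ vertices contributes the factor $n_i$ (again by Lemma \ref{pro}), while each odd unicyclic component $U$ is connected and non-bipartite, so $Q(U)$ is nonsingular and, by Lemma \ref{d4}, $p^{+}(U)=\det Q(U)=4$. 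Thus $p^{+}(H)$ is divisible by $4^{t}$, so if $t\ge 1$ it is even, contradicting $p^{+}(H)=p^{+}(G)$. Therefore $t=0$ and $H$ is a forest.

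Finally, since $H$ is a forest it is bipartite, so by Lemma \ref{lq} its $L$-spectrum equals its $Q$-spectrum; the same holds for $G$, so $H$ is $L$-cospectral with $G$. As $G=G_{p,2,q}\cup rS_{m}\cup sK_{1}$ is $DLS$ by Theorem \ref{t1}, we conclude $H\cong G$, which proves that $G$ is $DQS$. The only genuine obstacle is the second/third step — excluding non-bipartite (odd unicyclic) pieces from a hypothetical cospectral mate — and the oddness hypotheses on $p+q$ and $m$ are imposed precisely so that the $\det Q=4$ parity obstruction applies; the rest is bookkeeping resting on Theorem \ref{t1}.
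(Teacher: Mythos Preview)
Your proposal is correct and follows essentially the same route as the paper: count bipartite components via Lemma~\ref{dlc}(4), use the edge count to force the bipartite components to be trees and the non-bipartite ones to be odd unicyclic, rule out odd unicyclic pieces by the parity of $p^{+}(G)=(p+q+2)m^{r}$ using Lemmas~\ref{pro} and~\ref{d4}, and then invoke Theorem~\ref{t1}. The only difference is presentational---you spell out the edge-count inequality more explicitly than the paper does.
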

\begin{proof}
	Let $H$ be a graph $Q$-cospectral with $G_{p,2,q}\cup r S_{m}\cup s K_{1}$. By Lemma \ref{dlc} (4), $H$ has exactly $r+s+1$ bipartite components. So $H$ has at least $p+q+r(m-1)+1$ edges. Let $H_{1}, H_{2}, \ldots, H_{r+s+1}$ be the bipartite components of $H$ with $n_{1}, n_{2}, \ldots, n_{r+s+1}$ vertices, respectively. Since $G_{p,2,q}\cup r S_{m}\cup s K_{1}$ has   $p+q+r(m-1)+1$ edges, by Lemma \ref{dlc} (2), it follows that $H_{1}, H_{2}, \ldots, H_{r+s+1}$ are trees and all other components of $H$, if any,  are odd unicyclic graphs. Suppose $H\cong H_{1}\cup H_{2}\cup \ldots \cup H_{r+s+1}$. Then $H$ is $L$-cospectral with $G_{p,2,q}\cup r S_{m}\cup s K_{1}$ and so by Theorem \ref{t1}, $H\cong G_{p,2,q}\cup rS_{m}\cup  s K_{1}$. Suppose $H$ has $c$ $(\ge 1)$ number of odd unicyclic components, then by Lemmas \ref{pro} and \ref{d4}, $4^{c}n_{1}n_{2}\ldots n_{r+s+1}=(p+q+2)m^{r}$. This is impossible because $(p+q+2)m^{r}$ is odd. This completes the proof.
\end{proof}	
\begin{theorem}
Let $p>1$ be an odd integer and let $G$ be an odd unicyclic $DQS$ graph of order $n$ such that $m^{+}_{G}(p)=2$. Then the graph $G\cup r S_{p}\cup s K_{1}$ is $DQS$.
\begin{proof}
Let $H$ be a graph $Q$-cospectral with $G\cup r S_{p}\cup s K_{1}$. By Lemma \ref{dlc} (4), $H$ has exactly $r+s$ bipartite components and so $H$ has at least $n+r(p-1)$ edges. Let $H_{1}, H_{2}, \ldots, H_{r+s}$ be the bipartite components of $H$ with $n_{1}, n_{2}, \ldots, n_{r+s}$ vertices, respectively. Since $G\cup r S_{p}\cup s K_{1}$ has   $n+r(p-1)$ edges and $H$ has at least $n+r(p-1)$ edges, the components $H_{1}, H_{2}, \ldots, H_{r+s}$ are trees and all other components of $H$, if any,  are odd unicyclic graphs, by Lemma \ref{dlc} (2). Suppose $H\cong H_{1}\cup H_{2}\cup \ldots \cup H_{r+s}$. Since $m^{+}_{H}(p)=r+2$, by Lemma \ref{siam}, $r+2$ components of $H$ say $H_{1}, H_{2}, H_{3}, \ldots, H_{r+2}$ have $p$ as one of its signless Laplacian eigenvalue and also $p$ must divide $n_{i}$ for $1\le i\le r+2$, i.e., $n_{i}=pk_{i}$, for some integer $k_{i}$. Therefore,  from Lemmas \ref{pro} and \ref{d4}, we have $4p^r=p^{r+2}k_{1}k_{2}\ldots k_{r+2}n_{r+3}\ldots n_{r+s}$. This implies that $p| 4$, a contradiction. Hence $H$ has $c$ $(\ge1)$ number of odd unicyclic graphs. Now, again by Lemmas \ref{d4} and \ref{pro},  $4p^r=4^cn_{1}n_{2}\ldots n_{r+s}$ and so $c=1$, since $p$ is odd. Let $C$ be an odd unicyclic component of $H$. From Lemmas \ref{U2} and \ref{siam}, at least $r$ bipartite components (trees) of $H$, say $H_{1}, H_{2}, \ldots H_{r}$ have $p$ as one of its signless Laplacian eigenvalue and also $p$ must divide $n_{i}$ for $1\le i\le r$, i.e., $n_{i}=pk_{i}$, for some integer $k_{i}$. Hence $p^{r}=p^rk_{1}k_{2}\ldots k_{r}n_{r+1}n_{r+2}\ldots n_{r+s}$ and so $k_{1}=k_{2}=\ldots=n_{r+1}=n_{r+2}=\ldots=n_{r+s}=1$. Thus $H_{i}$, $1\le i\le r$ is a tree with $p$ vertices and $H_{i}=K_{1}$, $i=r+1,r+2,\ldots,r+s$. Now, since $p$ is a signless Laplacian eigenvalue of $H_{i}$ ($1\le i\le r$), a tree with $p$ vertices,  by Lemma \ref{tree}, $H_{i}=S_{p}$. Thus $C\cong G$ and $H \cong G \cup r S_{p}\cup sK_{1}$.   	
\end{proof} 	
\end{theorem}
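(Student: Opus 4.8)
\textit{Proof proposal.} The plan is to fix a graph $H$ that is $Q$-cospectral with $\Gamma:=G\cup rS_p\cup sK_1$ and to prove $H\cong\Gamma$. First I would read off the combinatorial invariants encoded in the $Q$-spectrum using Lemma \ref{dlc}: $H$ has $n+rp+s$ vertices, $n+r(p-1)$ edges, and, since the odd unicyclic graph $G$ is non-bipartite while every $S_p$ and every $K_1$ is bipartite, exactly $r+s$ bipartite components. A connected bipartite graph on $n_i$ vertices has at least $n_i-1$ edges, and a connected non-bipartite graph (it contains an odd cycle) has at least $n_i$ edges; summing these lower bounds over all components of $H$ already accounts for all $n+r(p-1)$ edges, so equality must hold for every component. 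Hence the $r+s$ bipartite components of $H$ are trees and every remaining component is unicyclic with an odd cycle, i.e.\ odd unicyclic.

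Next I would record two numerical identities. For the product of the nonzero $Q$-eigenvalues: by Lemma \ref{pro} a tree $T$ contributes $|V(T)|$ and a $K_1$ contributes $1$, while by Lemma \ref{d4} an odd unicyclic graph contributes $\det Q=4$; since this product is multiplicative over components, $p^+(\Gamma)=4p^r$, so if the tree components of $H$ have orders $n_1,\dots,n_{r+s}$ and $H$ has $c$ odd unicyclic components, then $4p^r=4^{c}\,n_1n_2\cdots n_{r+s}$. For the multiplicity of the eigenvalue $p$: since $S_p=K_{1,p-1}$ is bipartite with Laplacian spectrum $\{p,1^{(p-2)},0\}$ and $p>1$, we have $m^+_{S_p}(p)=1$ and $m^+_{K_1}(p)=0$, so the hypothesis $m^+_G(p)=2$ gives $m^+_H(p)=r+2$.

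The heart of the argument is a case split on $c$. Since trees are bipartite, Lemma \ref{siam} (applied via Lemma \ref{lq}) shows each tree component of $H$ carries $p$ with multiplicity at most $1$, and if it carries $p$ then $p$ divides its order; by Lemma \ref{U2} an odd unicyclic component carries $p$ with multiplicity at most $2$. If $c=0$, the budget $m^+_H(p)=r+2$ forces $r+2$ of the tree components to have orders divisible by $p$; writing $n_i=pk_i$ for these and substituting into $4p^r=n_1\cdots n_{r+s}$ yields $p^2\mid 4$, which is impossible since $p$ is odd and $p>1$. Hence $c\ge1$, and comparing the exponents of $2$ in $4p^r=4^{c}n_1\cdots n_{r+s}$ (using that $p^r$ is odd) forces $c=1$ and $p^r=n_1\cdots n_{r+s}$. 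The unique odd unicyclic component absorbs at most $2$ of the budget $r+2$, so at least $r$ of the tree components have orders divisible by $p$; writing $n_i=pk_i$ for these $r$ trees and substituting into $p^r=n_1\cdots n_{r+s}$ forces every remaining factor to equal $1$, so those $r$ trees have exactly $p$ vertices and every other tree component is $K_1$. A tree on $p$ vertices having $p$ among its Laplacian eigenvalues must be $S_p$, by $\mu_1\le|V|$ (Lemma \ref{l2}) together with Lemma \ref{tree}. Finally, cancelling the $Q$-spectra of the $r$ copies of $S_p$ and the $s$ copies of $K_1$ from the equal $Q$-spectra of $H$ and $\Gamma$ leaves the unique odd unicyclic component $Q$-cospectral with $G$; as $G$ is $DQS$ it must be isomorphic to $G$, and therefore $H\cong\Gamma$.

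The step I expect to be the main obstacle is this last case, $c\ge1$: one has to balance the multiplicity budget $r+2$ for the eigenvalue $p$ between a single unicyclic component (multiplicity at most $2$) and several tree components (multiplicity at most $1$, each with a divisibility constraint), and then push that divisibility information through the identity $p^r=n_1\cdots n_{r+s}$ until every free parameter collapses to $1$. Making the inequalities tight enough to conclude that the tree components are exactly $r$ copies of $S_p$ and $s$ copies of $K_1$ — in particular ruling out some larger tree that secretly carries the eigenvalue $p$ — is exactly where the hypotheses that $p$ is odd, that $p>1$, and that $m^+_G(p)=2$ all get used, and it is the easiest place to make an error. By comparison, the edge-counting reduction and the identification of $S_p$ via Lemma \ref{tree} are routine.
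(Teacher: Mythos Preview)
Your proposal is correct and follows essentially the same route as the paper's proof: the edge-count reduction to trees plus odd unicyclic components, the product identity $4p^r=4^{c}n_1\cdots n_{r+s}$ from Lemmas~\ref{pro} and~\ref{d4}, the case split on $c$ using Lemmas~\ref{siam} and~\ref{U2} to distribute the multiplicity $r+2$ of the eigenvalue $p$, and the identification of the $p$-vertex trees as $S_p$ via Lemma~\ref{tree} are all exactly what the paper does. Your write-up is, if anything, slightly more explicit in justifying the edge-count and in invoking Lemma~\ref{lq} to transfer Lemma~\ref{siam} to the signless Laplacian side.
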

\begin{theorem}
Let $G$ be an odd unicyclic $DQS$ graph on $n$ vertices such that $\gamma_{n-1}(G)\ge1$ and let $p>1$ be an odd integer such that no proper divisors of $p$ is a signless Laplacian eigenvalue of $G$ and $n\ge 3p+1$. Then the graph $G\cup  S_{p}\cup sK_{1}$ is $DQS$.
\end{theorem}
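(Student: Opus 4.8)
Let $H$ be a graph $Q$-cospectral with $G\cup S_{p}\cup sK_{1}$; the plan, following the pattern of the previous theorems, is to show $H\cong G\cup S_{p}\cup sK_{1}$. First I would record, using Lemma~\ref{dlc}, that $H$ has $n+p+s$ vertices, $n+p-1$ edges, and exactly $s+1$ bipartite components (since $G$ is odd unicyclic, hence non-bipartite). Write the bipartite components as $H_{1},\dots,H_{s+1}$ with $n_{i}=|V(H_{i})|$, and let $C_{1},\dots,C_{c}$ be the remaining components. A connected bipartite graph on $k$ vertices has at least $k-1$ edges and a connected non-bipartite one at least $k$; comparing with the total edge count gives
\[
 n+p-1=\sum_{i}|E(H_{i})|+\sum_{j}|E(C_{j})|\ \ge\ (n+p+s)-(s+1)=n+p-1,
\]
so equality holds throughout, i.e.\ each $H_{i}$ is a tree and each $C_{j}$ is an odd unicyclic graph.

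Next I would use the product of non-zero $Q$-eigenvalues: by Lemma~\ref{d4}, $p^{+}(G)=\det Q(G)=4$, and by Lemma~\ref{pro}, $p^{+}(S_{p})=p\,\tau(S_{p})=p$, so $p^{+}(G\cup S_{p}\cup sK_{1})=4p$, while $p^{+}(H)=4^{c}n_{1}n_{2}\cdots n_{s+1}$. Hence $4^{c}n_{1}\cdots n_{s+1}=4p$. If $c=0$ then $n_{1}\cdots n_{s+1}=4p$ and $n_{1}+\cdots+n_{s+1}=n+p+s$; since the sum of $s+1$ positive integers with product $4p$ cannot exceed $4p+s$, this forces $n+p+s\le 4p+s$, contradicting $n\ge 3p+1$. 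Thus $c\ge1$, and since $p$ is odd, comparison of $2$-adic valuations forces $c=1$ and $n_{1}\cdots n_{s+1}=p$. Writing $C:=C_{1}$ for the unique odd unicyclic component, I then get $|V(C)|=(n+p+s)-\sum_{i}n_{i}\ge n+p+s-(p+s)=n$.

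Then I would count $Q$-eigenvalues in $[0,1)$, recalling that each tree component has $Q$-spectrum equal to its $L$-spectrum (Lemma~\ref{lq}). As $n\ge 3p+1\ge 10$, $G$ is not a cycle (otherwise $\gamma_{n-1}(G)=2-2\cos(\pi/n)<1$ would violate $\gamma_{n-1}(G)\ge1$), so $G$ has a pendant vertex and $\gamma_{n}(G)<1$ by Lemma~\ref{kcdas}; with $\gamma_{n-1}(G)\ge1$ and $Spec_{Q}(S_{p})=\{p,1^{(p-2)},0\}$ this shows $G\cup S_{p}\cup sK_{1}$ has exactly $s+2$ eigenvalues in $[0,1)$. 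On the other hand each tree $H_{i}$ contributes the eigenvalue $0$ and, by Lemma~\ref{dl1}, a second eigenvalue in $[0,1)$ unless $H_{i}$ is a star; so if some $H_{i}$ were not a star the trees alone would contribute at least $s+2$ eigenvalues in $[0,1)$, forcing $C$ to have none below $1$, hence $\delta(C)\ge2$ by Lemma~\ref{kcdas}, hence $C$ an odd cycle $C_{k}$; but $2-2\cos(\pi/k)\ge1$ forces $k=3$, contradicting $|V(C)|\ge n>3$. Therefore every $H_{i}$ is a star.

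Finally, $H=S_{n_{1}}\cup\cdots\cup S_{n_{s+1}}\cup C$ with $n_{1}\cdots n_{s+1}=p$ odd. If two of the $n_{i}$ were $>1$, some $n_{i}$ would be a proper divisor of $p$ with $3\le n_{i}<p$; but $n_{i}$ is the largest $Q$-eigenvalue of $S_{n_{i}}$, so $n_{i}\in Spec_{Q}(H)=Spec_{Q}(G)\cup\{p,1^{(p-2)},0^{(s+1)}\}$, and since $n_{i}\notin\{0,1,p\}$ this makes $n_{i}$ a $Q$-eigenvalue of $G$ — contradicting that no proper divisor of $p$ is a $Q$-eigenvalue of $G$. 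Hence exactly one $n_{i}$ exceeds $1$, that $n_{i}$ equals $p$, and $H=S_{p}\cup sK_{1}\cup C$ with $|V(C)|=n$; cancelling $Spec_{Q}(S_{p})$ and $Spec_{Q}(sK_{1})$ from $Spec_{Q}(H)=Spec_{Q}(G\cup S_{p}\cup sK_{1})$ yields $Spec_{Q}(C)=Spec_{Q}(G)$, so $C\cong G$ because $G$ is $DQS$, and therefore $H\cong G\cup S_{p}\cup sK_{1}$. I expect the main obstacle to be the eigenvalue-counting step: one must pin down that $[0,1)$ contains exactly $s+2$ eigenvalues of $G\cup S_{p}\cup sK_{1}$ — which is precisely where $\gamma_{n-1}(G)\ge1$ and $n\ge 3p+1$ are used — and convert this into the statement that every tree component of $H$ is a star, after which the divisor hypothesis on $p$ completes the argument.
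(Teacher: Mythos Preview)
Your proof is correct and follows essentially the same strategy as the paper: both arguments first force each bipartite component of $H$ to be a tree and each non-bipartite component to be odd unicyclic, then use the identity $4^{c}\prod n_{i}=4p$ to get $c=1$ and $\prod n_{i}=p$, then show the tree components are all stars, and finally use the divisor hypothesis to pin down $S_{p}\cup sK_{1}$.

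The differences are organisational. For $c=0$ the paper invokes Lemma~\ref{siam} (so that $p\mid n_{1}$ and hence $k_{1}n_{2}\cdots n_{s+1}=4$) to reach $\sum n_{i}\le 4p+s$, whereas you obtain the same bound directly from the elementary fact that $s+1$ positive integers with product $4p$ have sum at most $4p+s$; your route is slightly cleaner and avoids Lemma~\ref{siam}. For $c=1$ the paper splits on whether the smallest $Q$-eigenvalue of the unicyclic component $H_{0}$ is below~$1$ (in which case it absorbs the unique non-zero eigenvalue $<1$, forcing the trees to be stars via Lemma~\ref{dl1}) or at least~$1$ (in which case $H_{0}\cong C_{3}$ via Lemma~\ref{kcdas}, contradicting the vertex count). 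You instead first verify $\gamma_{n}(G)<1$ (since $G$ cannot be a cycle) so that $[0,1)$ contains exactly $s+2$ eigenvalues, and then argue by contradiction that a non-star tree would push $C$ into being $C_{3}$. The two arguments are logically equivalent; yours makes the eigenvalue count explicit where the paper leaves it implicit in the phrase ``by hypothesis and from Lemma~\ref{dl1}.''
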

\begin{proof}
Let $H$ be a graph $Q$-cospectral with $G\cup  S_{p}\cup s K_{1}$. By Lemma \ref{dlc} (4), $H$ has exactly $s+1$ bipartite components and so $H$ has at least $n+p-1$ edges. Let $H_{1}, H_{2}, \ldots, H_{s+1}$ be the bipartite components of $H$ with $n_{1}, n_{2}, \ldots, n_{s+1}$ vertices, respectively. Since $G\cup  S_{p}\cup s K_{1}$ has   $n+p-1$ edges and $H$ has at least $n+p-1$ edges, it follows that $H_{1}, H_{2}, \ldots, H_{s+1}$ are trees and all other components of $H$, if any,  are odd unicyclic graphs, by Lemma \ref{dlc} (2). Suppose $H\cong H_{1}\cup H_{2}\cup \ldots \cup H_{s+1}$ and $p\in Spec(H_{1})$. Then by Lemma \ref{siam}, $n_{1}=p k_{1}$, for some integer $k_{1}$. Using  Lemmas \ref{d4} and \ref{pro}, we get, $4=k_{1}n_{2}\ldots n_{s+1}$ and so $k_{1}=4, 2 ~\text{or}~ 1$. Thus $\ds\sum_{i=1}^{s+1}{n_{i}}=n+p+s\le s+4p$. This implies  $n\le 3p$, a contradiction. Hence $H$ has $c$ $(\ge1)$ number of odd unicyclic graphs. Now again from Lemmas \ref{d4} and \ref{pro},  $4p=4^cn_{1}n_{2}\ldots n_{s+1}$ and so $c=1$ and $p=n_{1}n_{2}\ldots n_{s+1}$, since $p$ is odd. Let $H_{0}$ be the odd unicyclic component of $H$ of order $n_{0}$. If $\gamma_{n_{0}}(H_{0})<1$, then by hypothesis and  from Lemma \ref{dl1}, 
all the trees $H_{1}, H_{2},\ldots, H_{s+1}$ are star graphs. Since no proper divisors of $p$ is a signless Laplacian eigenvalue of $G$ and also since $p=n_{1}n_{2}\ldots n_{s+1}$, we have, $H_{1}= S_{p}$ and $H_{2}= H_{3}=\ldots =H_{s+1}= K_{1}$. Hence $H_{0}\cong G$ and $H\cong G\cup  S_{p}\cup sK_{1}$. If $\gamma_{n_{0}}(H_{0})\ge1$, then by Lemma \ref{kcdas}, $H_{0}$ must be a cycle. Since $\gamma_{m}(C_{m})<1$, for $m\ge 5$,  $H_{0}$ must be $C_{3}$. As $p=n_{1}n_{2}\ldots n_{s+1}$, we have $\ds\sum_{i=1}^{s+1}n_{i}\le p+s$ and since $n+p+s=3+\ds\sum_{i=1}^{s+1}n_{i}$, it follows that $n\le3$, a contradiction. This completes the proof.      
\end{proof}
The proof of our next theorem is similar to that of the above theorem.
\begin{theorem}
Let $G$ be an odd unicyclic $DQS$ graph on $n\ge10$ vertices. Then the graph $G\cup S_{3}\cup s K_{1}$ is $DQS$.	
\end{theorem}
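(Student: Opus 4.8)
The plan is to follow the argument of the preceding theorem with $p=3$, exploiting that $3p+1=10$, so that the hypothesis $n\ge 10$ plays exactly the role of ``$n\ge 3p+1$''; the extra hypotheses appearing there (namely $\gamma_{n-1}(G)\ge 1$ and ``no proper divisor of $p$ is a signless Laplacian eigenvalue of $G$'') turn out to be unnecessary when $p=3$, because $3$ is prime and the only tree on $3$ vertices is $S_3$. Let $H$ be a graph $Q$-cospectral with $G\cup S_3\cup sK_1$. By Lemma \ref{dlc}(4), $H$ has exactly $s+1$ bipartite components $H_1,\dots,H_{s+1}$, with $n_1,\dots,n_{s+1}$ vertices; since $G$ is odd unicyclic, $G\cup S_3\cup sK_1$ has $n+2$ edges and $n+3+s$ vertices, so by Lemma \ref{dlc}(1),(2) and the standard edge count (a connected bipartite component on $k$ vertices has at least $k-1$ edges, a connected non-bipartite component on $k$ vertices has at least $k$ edges), every $H_i$ is a tree and every remaining component of $H$ is odd unicyclic.

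Next I would record that $p^{+}(G\cup S_3\cup sK_1)=4\cdot 3=12$: by Lemma \ref{d4}, $\det Q(G)=4$, and since $G$ is connected and non-bipartite, $Q(G)$ is positive definite, so $p^{+}(G)=4$; also $p^{+}(S_3)=3$ by Lemma \ref{pro} (equivalently, $S_3$ is bipartite with $L$-spectrum $\{3,1,0\}$). I then split into two cases according to whether $H$ has an odd unicyclic component.

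If $H$ has none, then $H=H_1\cup\cdots\cup H_{s+1}$ is a forest, so by Lemma \ref{pro} we get $n_1 n_2\cdots n_{s+1}=p^{+}(H)=12$; among $s+1$ positive integers with product $12$ the sum is maximized by $12,1,\dots,1$, hence $\sum_i n_i\le 12+s$, whereas $\sum_i n_i=|V(H)|=n+3+s\ge 13+s$, a contradiction. (Alternatively one may invoke Lemma \ref{siam}: the integer eigenvalue $3\in Spec_{Q}(S_3)\subseteq Spec_{Q}(H)$ is a Laplacian eigenvalue of one of the trees $H_j$, so $3\mid n_j$, which is again incompatible with $n_1\cdots n_{s+1}=12$ and $\sum_i n_i\ge 13+s$.) Therefore $H$ has $c\ge 1$ odd unicyclic components, and by Lemmas \ref{d4} and \ref{pro}, $4^{c}n_1\cdots n_{s+1}=12$, which forces $c=1$ and $n_1\cdots n_{s+1}=3$. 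As $3$ is prime, exactly one $n_i$ equals $3$ and the rest equal $1$; the unique tree on $3$ vertices is $S_3$ and the unique tree on one vertex is $K_1$, so the bipartite part of $H$ is precisely $S_3\cup sK_1$. The remaining odd unicyclic component $H_0$ then has $(n+3+s)-(3+s)=n$ vertices, and cancelling the common contribution of $Spec_{Q}(S_3)$ and the $s$ zeros from $Spec_{Q}(H)=Spec_{Q}(G\cup S_3\cup sK_1)$ yields $Spec_{Q}(H_0)=Spec_{Q}(G)$; since $G$ is $DQS$ and $H_0$ is odd unicyclic on $n$ vertices, $H_0\cong G$, whence $H\cong G\cup S_3\cup sK_1$.

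I expect no genuine obstacle: the only steps requiring a line of care are the initial edge-counting step (identical in spirit to the preceding proofs) and verifying that the numerical constraints $n_1\cdots n_{s+1}=12$ in the forest case and $4^{c}n_1\cdots n_{s+1}=12$ otherwise really do collapse to the stated possibilities. Primality of $3$ is precisely what makes the final identification of the bipartite part automatic, so, unlike the preceding theorem for general $p$, no appeal to Lemma \ref{U2} or to the value of $\gamma_{n-1}(G)$ is needed.
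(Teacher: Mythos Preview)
Your proof is correct and matches the approach the paper intends: the paper does not spell out a proof but simply says it is ``similar to that of the above theorem,'' and your argument is exactly that specialization to $p=3$, with the natural simplifications you identify (primality of $3$ and the uniqueness of the tree on three vertices make the hypotheses $\gamma_{n-1}(G)\ge 1$ and the divisor condition superfluous). Your direct product--sum bound $n_1\cdots n_{s+1}=12\Rightarrow \sum n_i\le 12+s$ in the forest case is a clean alternative to the paper's use of Lemma~\ref{siam}, but the two are equivalent here.
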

In \cite{LJ}, Sun et al. showed that if $G$ is a ($n,$ $m$) graph with $n\ge 10$ and $m\ge n(n-3)/2+4$, then the graph $G\cup K_{2}\cup rK_{1}$ is $DLS$. Motivated by this result, we obtain the following theorem. 
\begin{theorem}
Let $G$ be a connected $(n, m)$ graph with $\small{m\ge \dfrac{(n-2)(n-3)}{2}+5}$. Suppose $H$ is a graph Q-cospectral with $G\cup K_{2}\cup rK_{1}$. Then \begin{enumerate} [a.]
	\item $H\cong H_{1}\cup (r+1) K_{1}$;
	\item
	$H\cong H_{1}\cup K_{2}\cup r K_{1}$;
	\item
	$H\cong H_{1}\cup K_{2}\cup K_{2}\cup (r-1) K_{1}$;
	\item
	$H\cong H_{1}\cup K_{1,2}\cup r K_{1}$, where $H_{1}$ is a connected graph.
\end{enumerate}     
\end{theorem}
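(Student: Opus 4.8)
The plan is to isolate the one ``large'' component of $H$ and show that every other component has at most three vertices. Write $H=H_1\cup\cdots\cup H_t$ for the components of $H$, ordered so that $\gamma_1(H_1)=\gamma_1(H)$, and put $n_1=|V(H_1)|$ and $m_1=|E(H_1)|$. First I would record the cheap facts. By Lemma~\ref{dlc}(1)--(2), $H$ has $n+r+2$ vertices and $m+1$ edges (and the hypothesis already forces $n\ge4$). Since a bipartite graph on $n$ vertices has at most $\lfloor n^2/4\rfloor<\tfrac{(n-2)(n-3)}{2}+5\le m$ edges, $G$ is non-bipartite, so $G\cup K_2\cup rK_1$ has exactly $r+1$ bipartite components and, by Lemma~\ref{dlc}(4), so does $H$. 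Finally, Lemma~\ref{l1} gives $\gamma_1(G)\ge\tfrac{4m}{n}\ge 2n-10+\tfrac{32}{n}$, which exceeds $2=\gamma_1(K_2)$, so $\gamma_1(H_1)=\gamma_1(G)$.

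The first step is to show $H_1$ is non-bipartite. If $H_1$ were bipartite, then by Lemma~\ref{l2}, $\gamma_1(G)=\mu_1(H_1)\le n_1$. Since $H$ has exactly $r+1$ bipartite components, any other component is non-bipartite with $\ge3$ vertices; were there such a component we would get $n_1\le (n+r+2)-r-3=n-1$ and hence the impossible $2n-10+\tfrac{32}{n}\le n-1$. So $H$ would be entirely bipartite with $r+1$ components and $n_1\le n+2$, giving $2n-10+\tfrac{32}{n}\le n+2$, i.e.\ $n\le8$. The finitely many pairs $(n,m)$ with $4\le n\le8$ are then eliminated by a direct check: the forced complete-bipartite candidate for $H_1$ cannot reproduce the eigenvalue $2$ in $Spec_{Q}(G\cup K_2\cup rK_1)$, and equality in Lemma~\ref{l1} would require $G$ to be regular of a non-integral degree. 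Thus $H_1$ is non-bipartite, and all $r+1$ bipartite components of $H$ occur among $H_2,\dots,H_t$.

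The heart of the proof is the bound $n_1\ge n-1$. From $\gamma_1(G)=\gamma_1(H_1)\le\gamma_1(K_{n_1})=2n_1-2$ (Lemma~\ref{p}) and $\gamma_1(G)\ge 2n-10+\tfrac{32}{n}$, one first gets $n_1\ge n-3$, so it remains to rule out $n_1=n-2$ and $n_1=n-3$. In either case $m_1\le\binom{n_1}{2}$ forces $R:=H_2\cup\cdots\cup H_t$ to carry at least $m+1-\binom{n_1}{2}$ edges: at least $6$ if $n_1=n-2$, and at least $n+3$ (hence at least $7$) if $n_1=n-3$. But $R$ consists of the $r+1$ bipartite components of $H$ (using $\ge r+1$ vertices) together with an excess of $|V(R)|-(r+1)$ vertices, which equals $3$ when $n_1=n-2$ and $4$ when $n_1=n-3$; a short enumeration of how the excess vertices can be distributed (a non-bipartite piece needs $\ge3$ vertices; a connected bipartite piece on $k$ vertices has at most $\lfloor k^2/4\rfloor$ edges) shows that $R$ carries at most $4$ edges when $n_1=n-2$ and at most $6$ edges when $n_1=n-3$, contradicting the lower bounds since $n\ge4$. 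Hence $n_1\ge n-1$, and since the $r+1$ bipartite components occupy $\ge r+1$ of the $n+r+2$ vertices, also $n_1\le n+1$.

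To conclude: $n_1\in\{n-1,n,n+1\}$, and a further non-bipartite component would need $\ge3$ extra vertices while only $|V(R)|-(r+1)=n+1-n_1\le2$ are available, so $R$ is a disjoint union of exactly $r+1$ bipartite graphs on $n+r+2-n_1$ vertices. Distributing the excess of $0$, $1$, or $2$ vertices (and using that the only connected bipartite graph on three vertices is $K_{1,2}$) leaves precisely $R=(r+1)K_1$, $R=K_2\cup rK_1$, and either $R=K_{1,2}\cup rK_1$ or $R=2K_2\cup(r-1)K_1$; with $H_1$ connected these are exactly (a)--(d). The main obstacle is the step $n_1\ge n-1$ (in particular, excluding $n_1=n-2$), where the spectral bounds alone are insufficient and one must combine the hypothesis $m\ge\binom{n-2}{2}+5$ with the count of bipartite components; the small-$n$ bookkeeping in the ``$H_1$ non-bipartite'' step is a secondary nuisance.
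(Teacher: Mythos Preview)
Your proof is correct and relies on the same toolkit as the paper (Lemmas~\ref{l1}, \ref{l2}, \ref{lq}, \ref{p}, \ref{dlc}), but the organization is genuinely different. The paper first assumes $H$ has exactly $r+1$ components (forcing all of them to be bipartite), reduces to $4\le n\le 8$, and eliminates these cases by computation; only afterwards does it redefine $H_{1}$ as the component of \emph{maximum order} and split into the cases $|V(H_{1})|\in\{n+1,n,n-1,n-2\}$. You instead fix $H_{1}$ once and for all as the component with $\gamma_{1}(H_{1})=\gamma_{1}(G)$, prove it is non-bipartite (this step subsumes the paper's first phase), and then squeeze $n-1\le n_{1}\le n+1$ by combining the spectral bound $\gamma_{1}(H_{1})\le 2n_{1}-2$ with an edge count in the remainder $R$. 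Your treatment of the lower bound is in fact tighter than the paper's: the paper's Case~VIII only addresses $|V(H_{1})|=n-2$ and never explicitly disposes of $|V(H_{1})|\le n-3$, whereas your eigenvalue inequality gives $n_{1}\ge n-3$ and your edge enumeration in $R$ then kills $n_{1}\in\{n-2,n-3\}$ uniformly.

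One caveat: your one-line summary of the finite check for $4\le n\le 8$ (``the forced complete-bipartite candidate for $H_{1}$ cannot reproduce the eigenvalue $2$''; ``equality in Lemma~\ref{l1} would require $G$ regular of non-integral degree'') is not how the verification actually goes. In the paper, for $n=4$ one observes that a bipartite $H_{1}$ on six vertices with seven edges is a \emph{proper} subgraph of some complete bipartite graph on six vertices, whence $\gamma_{1}(H_{1})<6$ by Lemma~\ref{p}; for $n\in\{5,6,7,8\}$ one enumerates the candidate bipartite graphs $H_{1}$ (and, where needed, the candidate non-bipartite graphs $G$) and compares spectra directly. You are right that it is a finite check, but the mechanism you sketch should be replaced by the actual case analysis.
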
	
\begin{proof}
Since $G\cup K_{2}\cup rK_{1}$ and $H$ are $Q$-cospectral, by Lemma \ref{dlc} (4), $H$ has at least $r+1$ components. Let us assume that $H$ has exactly $r+1$ components. Suppose $H_{1}$ is a bipartite component of $H$ with $n_{1}$ vertices and $\gamma_{1}(H_{1})=\gamma_{1}(G)$. Then from Lemmas \ref{l1} and \ref{l2}, we have
\begin{eqnarray}\label{eq1}
	n+2\ge n_{1}\ge\gamma_{1}(H_{1})=\gamma_{1}(G)\ge \dfrac{4m}{n}\ge \dfrac{2(n-2)(n-3)+20}{n}.
\end{eqnarray}		
From (\ref{eq1}), it follows that $4\le n\le 8$. \\\\
Case I: $n=4$ or $n=8$. If $n=4$, then by (\ref{eq1}), $H_{1}$ is a bipartite graph with 6 vertices and 7 edges. Also, we must have $\gamma_{1}(H_{1})=6$. This is impossible, because $H_{1}$ is  a proper subgraph of a complete bipartite graph  with 6 vertices and so by Lemma \ref{p}, $\gamma_{1}(H_{1})<6$. If $n=8$, then $H_{1}$ is a bipartite graph with 10 vertices and 21 edges. Also, $2\in Spec_{Q}(H_{1})$. This is impossible.\\
 \begin{figure}
	\begin{align*}
	\includegraphics[height=0.85in]{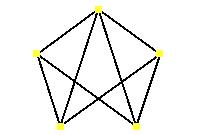}&~~~
	\includegraphics[height=0.85in]{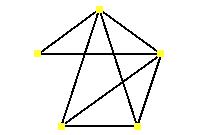}
	\end{align*}
	\caption{Non isomorphic graphs with 5 vertices and 8 edges.}
	\label{fig1}
\end{figure}

\ni Case II: If $n=5$, then by (\ref{eq1}), $H_{1}$ is a bipartite graph with 7 vertices and 9 edges, and $G$ is a non bipartite graph with 5 vertices and 8 edges. Also, $\gamma_{1}(H_{1})\ge\dfrac{32}{5} $. By using Maple, we see that  $K_{2,5}-e$ is the only bipartite graph with 7 vertices and 9 edges such that its largest singless Laplacian eigenvalue greater than 6.4. Also, there are 2 non isomorphic graphs with 5 vertices and 8 edges, see Figure \ref{fig1}. But the largest signless Laplacian eigenvalue of these two graphs are different from that of $K_{2,5}-e$.\\\\
Case III: If $n=6$, then by (\ref{eq1}), $H_{1}$ is a bipartite graph with 8 vertices and 12 edges, and $G$ is a non bipartite graph with 6 vertices and 11 edges. By using Maple, we see that $K_{2,6}$ is the only bipartite graph with 8 vertices and 12 edges having its largest signless Laplacian eigenvalue greater than $\dfrac{22}{3}$. Also, there are 9 non isomorphic graphs with 6 vertices and 11 edges, see Fig. \ref{fig2}. But the second largest signless Laplacian eigenvalue of these 9 graphs are not equal to $\gamma_{2}(K_{2,6})=6$. 
\begin{figure}[h]
	\begin{align*}
	\includegraphics[height=0.85in]{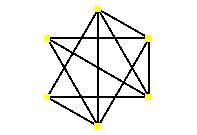}&~~~
	\includegraphics[height=0.85in]{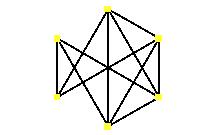}~~~~
	\includegraphics[height=0.85in]{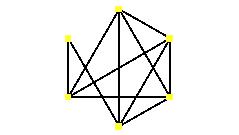}\\
	\includegraphics[height=0.85in]{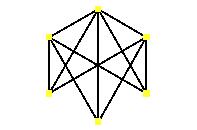}&~~~
	\includegraphics[height=0.85in]{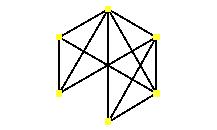}~~~
	\includegraphics[height=0.85in]{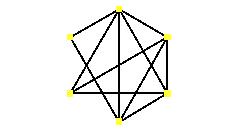}\\
	\includegraphics[height=0.85in]{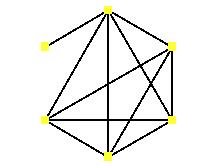}&~~~~~~~
	\includegraphics[height=0.85in]{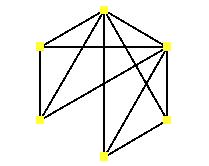}~~~~~~~~~~~
	\includegraphics[height=0.8in]{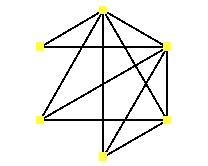}\\
	\end{align*}
	\caption{Non isomorphic graphs with 6 vertices and 11 edges.}
	\label{fig2}
\end{figure}\\\\
Case IV: If $n=7$, then $H_{1}$ is a bipartite graph with 9 vertices and 16 edges. Also, we must have $\gamma_{1}(H_{1})\ge\dfrac{60}{7}$. Using Maple, we see that there is no bipartite graph with 9 vertices and 16 edge such that $\gamma_{1}(H_{1})\ge\dfrac{60}{7}$.\\\\Hence $H$ at least $r+2$ components. Let $H_{1}$ be a component of $H$ with maximum number of vertices. Then $|V(H_{1})|\le n+1$.\\\\
Case V: If $|V(H_{1})|=n+1$, then $H\cong H_{1}\cup(r+1)K_{1}$, where $H_{1}$ is a connected graph on $n+1$ vertices and has 2 as one of its signless Laplacian eigenvalue. Hence (a) holds.\\\\    
Case VI: If $|V(H_{1})|=n$, then either $H\cong H_{1}\cup(r+2)K_{1}$ or $H\cong H_{1}\cup K_{2}\cup rK_{1}$. Suppose $H\cong H_{1}\cup(r+2)K_{1}$ then $\gamma_{1}(G)=\gamma_{1}(H_{1})\ge\dfrac{4(m+1)}{n}\ge \dfrac{2(n-2)(n-3)+24}{n}>n$, and so 	
by Lemmas \ref{l2} and \ref{lq}, $G$ is a non bipartite graph. This is not possible, because $H$ has $r+2$ bipartite components and by Lemma \ref{dlc}, $G$ must be bipartite. Hence (b) holds.\\\\
Case VII: If $|V(H_{1})|= n-1$, then \\ $$H
\cong \left\{ \begin{array}{c}
H_{1}\cup(r+3)K_{1};\\
  H_{1}\cup K_{2}\cup(r+1)K_{1}~~ \text{or}~~ H_{1}\cup K_{2}\cup K_{2}\cup (r-1)K_{1} ;\\
 H_{1}\cup K_{1,2} \cup rK_{1}~~ \text{or}~~ H_{1}\cup K_{3} \cup r K_{1}.  \end{array}\right.$$
 By Lemma \ref{dlc}, it is straight forward that the case $H_{1}\cup(r+3)K_{1}$ is not possible.
 Suppose $H
 \cong 
 H_{1}\cup K_{2}\cup(r+1)K_{1}$ then $\gamma_{1}(G)=\gamma_{1}(H_{1})>n$, and so 	
 by Lemmas \ref{l2} and \ref{lq}, $G$ is a non bipartite graph. This is impossible, because $H$ has $r+2$ bipartite components and by Lemma \ref{dlc}, $G$ must be bipartite. Now, if $H
 \cong 
 H_{1}\cup K_{3} \cup r K_{1}$, then $\gamma_{1}(H_{1})\ge\dfrac{4(m-2)}{n-1}>n-1$. Hence by Lemma \ref{dlc}, $H_{1}$ is a non bipartite graph. Therefore, $H$ has $r$ bipartite components, a contradiction to Lemma \ref{dlc}.\\\\
 Case VIII:  If $|V(H_{1})|= n-2$, then $H$ has at most $\dfrac{(n-2)(n-3)}{2}+6$ edges.  Since $H$ must have at least $\dfrac{(n-2)(n-3)}{2}+6$ edges, it follows that $H \cong K_{n-2}\cup K_{4}\cup rK_{1}$. In this case, for $n\neq 4$, it can be seen that both $H$ and $G\cup K_{2}\cup rK_{1}$ have different number of bipartite components and hence by Lemma \ref{dlc}, the case $|V(H_{1})|=n-2$ is not possible. Hence the theorem is proved.     
\end{proof}

\begin{theorem}
For $n\ge 4$, the graph $K_{n}\cup K_{2}\cup rK_{1}$ is $DQS$.	
\end{theorem}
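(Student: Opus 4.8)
The plan is to invoke the previous theorem with $G=K_n$. Since $K_n$ has $m=\binom n2$ edges, the hypothesis $m\ge\frac{(n-2)(n-3)}{2}+5$ reduces to $n(n-1)\ge(n-2)(n-3)+10$, i.e.\ to $4n\ge16$, so it holds precisely for $n\ge4$. Hence any graph $H$ which is $Q$-cospectral with $K_n\cup K_2\cup rK_1$ must be of one of the four forms (a)--(d) of that theorem, with $H_1$ connected. Since $Spec_Q(K_n\cup K_2\cup rK_1)=\{2n-2,(n-2)^{n-1},2,0^{r+1}\}$, Lemma \ref{dlc}(1)--(2) lets me read off $|V(H_1)|$ and $|E(H_1)|$ in each case, and the aim is to eliminate every possibility except (b).

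Cases (c) and (d) are immediate: in both one gets $|V(H_1)|=n-1$ but $|E(H_1)|=\binom n2-1$, and $\binom n2-1>\binom{n-1}2$ for $n\ge3$, which is impossible for a simple graph on $n-1$ vertices. Case (b) is the desired one: there $|V(H_1)|=n$ and $|E(H_1)|=\binom n2$, so $H_1\cong K_n$, and therefore $H\cong K_n\cup K_2\cup rK_1$.

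The whole weight of the proof therefore rests on case (a), where $H_1$ is connected on $n+1$ vertices with $Spec_Q(H_1)=\{2n-2,(n-2)^{n-1},2\}$; in particular $H_1$ is non-bipartite and is $K_{n+1}$ with $n-1$ edges deleted. For $n\ge5$ the three numbers $2n-2$, $n-2$, $2$ are distinct, so $n-2$ is a $Q$-eigenvalue of $H_1$ of multiplicity exactly $(n+1)-2$, and Theorem \ref{mul} forces $H_1\in\{K_{n+1}-e,\,S_{n+1},\,K_{(n+1)/2,(n+1)/2},\,\overline{K_3\cup S_4},\,\overline{K_1\cup2K_3}\}$. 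Each is excluded: $K_{n+1}-e$ and $S_{n+1}$ have the wrong number of edges, $K_{(n+1)/2,(n+1)/2}$ is bipartite whereas $H_1$ is not, and $\overline{K_3\cup S_4}$, $\overline{K_1\cup2K_3}$ would force $n+1=7$ and then have $15$ rather than the required $16$ edges. When $n=4$ the coincidence $n-2=2$ collapses the spectrum to $\{6,2^4\}$, which lies outside the scope of Theorem \ref{mul}; here I would argue directly via traces: $\sum_i d_i(H_1)=14$ while $\sum_i d_i(H_1)^2=\operatorname{tr}Q(H_1)^2-2m=(36+16)-14=38$, contradicting the Cauchy--Schwarz bound $\sum_i d_i(H_1)^2\ge 14^2/5=39.2$ on $5$ vertices. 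Hence case (a) never occurs, and $K_n\cup K_2\cup rK_1$ is $DQS$. The main obstacle is exactly case (a) (and, within it, the degenerate value $n=4$); cases (b)--(d) amount to nothing more than vertex- and edge-counting.
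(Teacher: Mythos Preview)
Your proof is correct and follows essentially the same strategy as the paper: invoke the preceding structural theorem for $G=K_n$ and eliminate cases (a), (c), (d), using Theorem~\ref{mul} for case (a) when $n\ge5$. Your eliminations are in fact a bit cleaner than the paper's---you dispose of (b), (c), (d) by pure vertex/edge counting rather than spectral arguments, and for the degenerate case $n=4$ in (a) you supply an explicit Cauchy--Schwarz contradiction where the paper merely asserts that no graph has $Q$-spectrum $\{6,2,2,2,2\}$.
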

\begin{proof}
Let $H$ be a graph $Q$-cospectral with $K_{n}\cup K_{2}\cup rK_{1}$. Since $|E(K_{n})|=n(n-1)/2\ge\dfrac{(n-2)(n-3)}{2}+5$, from the above theorem, we have the following choices for $H$:\\\\
Case I: $H\cong H_{1}\cup K_{2}\cup K_{2}\cup (r-1) K_{1}$. This implies that $2\in Spec_{_Q}(K_{n})$ and so $n=4$. Therefore, $Spec_{Q}(H_{1})=\{6,2,2\}$. This is impossible as there is no graph $H_{1}$ of order 3 with $Spec_{Q}(H_{1})=\{6,2,2\}$.\\\\
Case II: $H\cong H_{1}\cup K_{1,2}\cup K_{2}\cup (r-1) K_{1}$. This is impossible because both 1 and 3 cannot be the $Q$-eigenvalues of a complete graph $K_{n}$.\\\\
Case III: $H\cong H_{1}\cup (r+1) K_{1}$. If $n=4$, then $Spec(H_{1})= \{6,2,2,2,2\}\}$. This is impossible because there is no graph with signless Laplacian spectrum as specified above. Suppose $n>4$. Then $H_{1}$ is a non bipartite graph with three distinct $Q$-eigenvalues $2(n-1)$, $n-2$ and $2$. From Theorem \ref{mul}, we see that $H_{1}$ must be one among the graphs $K_{n+1}-e$, $\overline{K_1\cup 2K_3}$ and $\overline{K_{3}\cup S_{4}}$. Since $Spec_{_Q}(K_{n+1}-e)=\{(3n-3+
	\sqrt{(n+1)^2 + 4n - 8})/2, n-2, n-2,\ldots, n-2,(3n-3-
	\sqrt{(n+1)^2 + 4n - 8})/2\}$ and $Spec_{Q}(\overline{K_1\cup 2K_3})=Spec_{Q}(\overline{K_3\cup K_4})=\{9,4,\ldots,4,1\}$. This case is not possible.\\\\   
Case IV: $H\cong H_{1}\cup K_{2}\cup  r K_{1}$. This implies that $H$ and $K_{n}$ are $Q$-cospectral mates. Since $K_{n}$ is $DQS$, we must have $H\cong K_{n}$. This completes the proof.	
\end{proof}
\begin{rmk}
The above theorem is not true for $n=3$, for example, the graphs $K_{3}\cup K_{2}\cup rK_{1}$ and $K_{1,3}\cup K_{2}\cup (r-1)K_{1}$ are $Q$-cospectral but not isomorphic.
\end{rmk}
In \cite{cou}, it is claimed that
if $G$ is a $DQS$ connected non-bipartite graph with $n\ge 3$ vertices and if $H$ is $Q$-cospectral with
$G \cup r K_1 \cup sK_2$, then $H$ is a $DQS$ graph. Here we give a counterexample for the claim. Let  $G$ and $H_{1}$ be graphs as depicted in Fig. \ref{fig3}. Then it is easy to see that $H_{1}$ is $DQS$ connected non-bipartite graph and the $Q$-spectrum of $G$ and $H_{1}$ are respectively,  
$\{(7+\sqrt{17})/2,3,(7-\sqrt{17})/2,1,1\}$ and $\{(7+\sqrt{17})/2,4,3,(7-\sqrt{17})/2,1,1\}$.  Thus the graphs $H_{1}\cup K_{2}\cup K_{2}$ and $G\cup C_{4}\cup K_{1}$  are $Q$-cospectral. Hence the claim is false.\\   \begin{figure}[h]
	\begin{align*}
	\includegraphics[height=1.1in]{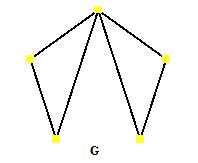}&~~~
	\includegraphics[height=1.1in]{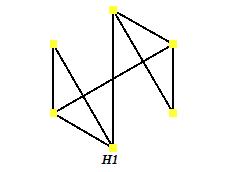}
	\end{align*}
	\caption{Graphs $G$ and $H_{1}$.}
	\label{fig3}
\end{figure}
\section{Signless Laplacian spectral determination of the join $K_{n-\alpha}-e\vee \alpha K_{1}$}
In Section \ref{section}, we showed that the graph $K_{n}\cup K_{2}\cup rK_{1}$ with $n\ge 4$ is $DQS$. In this section, we study the signless Laplacian spectral characterization of its complements. we first list some necessary lemmas required to prove our main results. We denote the eigenvalues of a Hermitian matrix $M$ of order $m$ by $\theta_{1}(M)\ge \theta_{2}(M)\ge\ldots\ge\theta_{m}(M)$. The join of two graphs $G$ and $H$, denoted by, $G\vee H$ is the graph obtained by joining every vertex of $G$ to every vertex of $H$. 

\begin{lemma}[\cite{book}]\label{jlh}
	Let $M=N+P$, where N and P are Hermitian matrices of order n. Then for $1\le i, j \le n$, we have
	\begin{enumerate}[(i)]
		\item 
		$\theta_{i}(N)+\theta_{j}(P)\le \theta_{i+j-n}(M)~(i+j>n)$;
		\item 
		$\theta_{i+j-1}(M)\le \theta_{i}(N)+\theta_{j}(P)~(i+j-1\le n)$.
	\end{enumerate}
\end{lemma}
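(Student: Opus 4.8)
The plan is to derive both inequalities from the Courant--Fischer min--max theorem, which is the only ingredient needed (these are Weyl's inequalities for a sum of Hermitian matrices). Recall that for a Hermitian matrix $A$ of order $n$, with eigenvalues $\theta_{1}(A)\ge\cdots\ge\theta_{n}(A)$ and Rayleigh quotient $R_{A}(x)=\dfrac{x^{*}Ax}{x^{*}x}$,
$$\theta_{k}(A)=\max_{\dim S=k}\ \min_{0\ne x\in S}R_{A}(x)=\min_{\dim S=n-k+1}\ \max_{0\ne x\in S}R_{A}(x),$$
the extrema being over subspaces $S\subseteq\mathbb{C}^{n}$ of the indicated dimension. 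The fact that makes the argument go through is the trivial additivity $R_{M}(x)=R_{N}(x)+R_{P}(x)$, valid because $M=N+P$.

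First I would prove (ii). Assume $i+j-1\le n$. Using the min--max form, choose a subspace $S_{1}$ with $\dim S_{1}=n-i+1$ and $\max_{0\ne x\in S_{1}}R_{N}(x)=\theta_{i}(N)$, and a subspace $S_{2}$ with $\dim S_{2}=n-j+1$ and $\max_{0\ne x\in S_{2}}R_{P}(x)=\theta_{j}(P)$. Then
$$\dim(S_{1}\cap S_{2})\ge(n-i+1)+(n-j+1)-n=n-(i+j-1)+1\ge 1,$$
so $S_{1}\cap S_{2}$ contains a subspace $T$ of dimension exactly $n-(i+j-1)+1$, and every nonzero $x\in T$ satisfies $R_{M}(x)=R_{N}(x)+R_{P}(x)\le\theta_{i}(N)+\theta_{j}(P)$. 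Feeding $T$ into the min--max characterization of $\theta_{i+j-1}(M)$ gives $\theta_{i+j-1}(M)\le\max_{0\ne x\in T}R_{M}(x)\le\theta_{i}(N)+\theta_{j}(P)$, which is (ii).

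For (i), assume $i+j>n$, i.e. $i+j-n\ge1$. The cleanest route is the mirror argument: using the max--min form, pick $S_{1}$ with $\dim S_{1}=i$ and $\min_{0\ne x\in S_{1}}R_{N}(x)=\theta_{i}(N)$, and $S_{2}$ with $\dim S_{2}=j$ and $\min_{0\ne x\in S_{2}}R_{P}(x)=\theta_{j}(P)$; then $\dim(S_{1}\cap S_{2})\ge i+j-n\ge1$, every nonzero $x$ in a dimension-$(i+j-n)$ subspace of $S_{1}\cap S_{2}$ has $R_{M}(x)\ge\theta_{i}(N)+\theta_{j}(P)$, and the max--min characterization of $\theta_{i+j-n}(M)$ yields $\theta_{i+j-n}(M)\ge\theta_{i}(N)+\theta_{j}(P)$. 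Alternatively, one can apply (ii) to $-M=(-N)+(-P)$ with the index choice $i'=n+1-i$, $j'=n+1-j$ and use $\theta_{k}(-A)=-\theta_{n+1-k}(A)$: here $\theta_{i'+j'-1}(-M)=-\theta_{i+j-n}(M)$, the constraint $i'+j'-1\le n$ is exactly $i+j>n$, and negating turns the inequality into (i).

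I do not expect a real obstacle: each of (i) and (ii) is a one-line "intersect two near-optimal subspaces" argument against Courant--Fischer. The only point that needs care is the dimension bookkeeping -- checking in each case that the subspace intersection is nonzero and of the right size to plug into the appropriate half of the min--max theorem, and, in the $-M$ derivation, that the index substitution carries the hypotheses and the conclusion over correctly.
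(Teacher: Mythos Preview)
Your argument is correct: both parts are exactly Weyl's inequalities, and the Courant--Fischer proof you outline (intersecting near-optimal subspaces, checking the dimension count, and reading off the bound from the appropriate half of the min--max principle) is the standard one and goes through with no gaps. The alternative derivation of (i) from (ii) via $M\mapsto -M$ with the index swap $i'=n+1-i$, $j'=n+1-j$ is also carried out correctly.

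As for comparison: the paper does not prove this lemma at all --- it is quoted from a reference (\cite{book}) and used as a black box. So there is no ``paper's own proof'' to compare against; your proposal supplies a complete self-contained argument where the paper simply cites one.
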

We denote by $P_{Q}(G, \lambda)$, the characteristic polynomial of $Q(G)$.
\begin{lemma}[\cite{js}]
Let $G=K_{n_{1},n_{2},\ldots,n_{r}}$ be a complete multipartite graph with $\ds\sum_{i=1}^{r}n_{i}=n$. Then\\
$P_{Q}(G,\lambda)=\left(1-\ds\sum_{i=1}^{r}\dfrac{n_{i}}{\lambda-n+2n_{i}}\right)\ds\prod_{i=1}^{r}(\lambda-n+n_{i})^{n_{i}-1}(\lambda-n+2n_{i})$.
\end{lemma}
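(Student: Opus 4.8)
The plan is to write $Q(G)$ as a block-diagonal matrix plus a rank-one term and then invoke the matrix determinant lemma. Order the vertices so that the first $n_1$ lie in the first part, the next $n_2$ in the second part, and so on. Since two vertices of $G$ are adjacent exactly when they belong to different parts, one has $A(G)=J_n-\bigoplus_{i=1}^{r}J_{n_i}$ and $D_G=\bigoplus_{i=1}^{r}(n-n_i)I_{n_i}$, so
\[
Q(G)=D_G+A(G)=\bigoplus_{i=1}^{r}\bigl[(n-n_i)I_{n_i}-J_{n_i}\bigr]+J_n=:B+\mathbf{1}\mathbf{1}^{\top},
\]
where $B$ is block-diagonal and $\mathbf{1}$ is the all-ones vector of length $n$.

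First I would record the spectral data of $B$. The $i$-th diagonal block of $\lambda I_n-B$ is $(\lambda-n+n_i)I_{n_i}+J_{n_i}$; it has $\mathbf{1}_{n_i}$ as an eigenvector with eigenvalue $\lambda-n+2n_i$, and the orthogonal complement of $\mathbf{1}_{n_i}$ as an $(n_i-1)$-dimensional eigenspace for $\lambda-n+n_i$. This gives $\det(\lambda I_n-B)=\prod_{i=1}^{r}(\lambda-n+2n_i)(\lambda-n+n_i)^{n_i-1}$, and, for all $\lambda$ not annihilating some block, the same eigenvector computation yields $\mathbf{1}_{n_i}^{\top}\bigl[(\lambda-n+n_i)I_{n_i}+J_{n_i}\bigr]^{-1}\mathbf{1}_{n_i}=n_i/(\lambda-n+2n_i)$, hence $\mathbf{1}^{\top}(\lambda I_n-B)^{-1}\mathbf{1}=\sum_{i=1}^{r}n_i/(\lambda-n+2n_i)$ by block-diagonality.

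Next, for such $\lambda$ I would apply the matrix determinant lemma to $\lambda I_n-Q(G)=(\lambda I_n-B)-\mathbf{1}\mathbf{1}^{\top}$, getting
\[
P_Q(G,\lambda)=\det(\lambda I_n-B)\Bigl(1-\mathbf{1}^{\top}(\lambda I_n-B)^{-1}\mathbf{1}\Bigr),
\]
and substituting the two quantities above produces exactly the stated formula. Since both sides are polynomials in $\lambda$ that coincide for all but finitely many values, they coincide identically.

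This is essentially routine linear algebra, so there is no substantial obstacle; the only point requiring a moment's care is the last step, namely extending the identity from generic $\lambda$ (where $\lambda I_n-B$ is invertible and the determinant lemma applies verbatim) to every $\lambda$ by the polynomial-identity principle. Diagonalizing the blocks $cI+J$ and not confusing the shift $\lambda-n+n_i$ with $\lambda-n+2n_i$ is where an error is most likely to creep in.
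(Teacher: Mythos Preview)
Your proof is correct. The paper does not actually prove this lemma; it is quoted from the reference \cite{js} without argument, so there is no ``paper's own proof'' to compare against here. Your approach---writing $Q(G)$ as the block-diagonal matrix $B=\bigoplus_i[(n-n_i)I_{n_i}-J_{n_i}]$ plus the rank-one perturbation $\mathbf{1}\mathbf{1}^{\top}$ and applying the matrix determinant lemma---is a standard and efficient route, and every step checks out: the block eigenvalues of $\lambda I-B$ are exactly $\lambda-n+2n_i$ (simple, eigenvector $\mathbf{1}_{n_i}$) and $\lambda-n+n_i$ (multiplicity $n_i-1$), the quadratic form $\mathbf{1}^{\top}(\lambda I-B)^{-1}\mathbf{1}$ splits blockwise into $\sum_i n_i/(\lambda-n+2n_i)$, and the final passage from generic $\lambda$ to all $\lambda$ via polynomial identity is handled correctly. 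Nothing is missing.
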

\begin{rmk}\label{jls}
From the above lemma, it can be seen that the $Q$-spectrum of $K_{n-\alpha}-e\vee \alpha K_{1}$ consists of:
\begin{enumerate}[a.]
	\item 
	$n-\alpha$, with multiplicity $\alpha-1$;
	\item 
	$n-2$, with multiplicity $n-\alpha-2$;
	\item
	and the three roots of the polynomial
	\begin{equation}\label{jequa1}
x^3+(2\alpha-4n+6)x^2+(2\alpha^2-6\alpha n+5n^2+10\alpha-16n+12)x-2(n-\alpha-2)(n^2-(\alpha+3)n+4\alpha).
	\end{equation}
\end{enumerate} 
\end{rmk}
The following two lemmas can be obtained easily.
\begin{lemma}\label{jequam}
The signless Laplacian spectrum of $\overline{K_{1,3}\cup K_{2}\cup (n-\alpha-3)K_{1}}$ consists of 
\begin{enumerate}[a.]
	\item 
	$n-3$, with multiplicity $2$;
	\item 
	$n-2$, with multiplicity $n-5$;
	\item
	and the three roots of the polynomial
	\begin{equation*}
	x^3+(-4n+12)x^2+(5n^2-34n+60)x-2n^3+22n^2-84n+120.
	\end{equation*}
\end{enumerate} 
\end{lemma}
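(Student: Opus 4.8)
The plan is to read the spectrum off an equitable partition of the graph. Write $G:=\overline{K_{1,3}\cup K_{2}\cup (n-\alpha-3)K_{1}}$ (so that $\alpha=3$ and $G$ has $n$ vertices; equivalently, since the complement of a disjoint union is a join, $G=(K_{1}\cup K_{3})\vee 2K_{1}\vee K_{n-6}$). Partition $V(G)$ into four classes: $V_{1}$, the centre of the star ($1$ vertex, of degree $n-4$ in $G$); $V_{2}$, the three leaves of the star (each of degree $n-2$, inducing a triangle in $G$); $V_{3}$, the two vertices of the $K_{2}$ (each of degree $n-2$, non-adjacent in $G$); and $V_{4}$, the remaining $n-6$ vertices (each of degree $n-1$, inducing $K_{n-6}$ in $G$). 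A direct count shows that every vertex of $V_{i}$ has a fixed number of neighbours in each $V_{j}$, so the partition is equitable for $Q(G)$, with quotient matrix
\[
B=\begin{pmatrix} n-4 & 0 & 2 & n-6\\ 0 & n & 2 & n-6\\ 1 & 3 & n-2 & n-6\\ 1 & 3 & 2 & 2n-8\end{pmatrix}.
\]

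Next I would split the eigenvalues of $Q(G)$ according to the invariant decomposition $\mathbb{R}^{n}=(\text{vectors constant on each class})\oplus(\text{vectors summing to zero on each class})$. The first summand contributes precisely the four eigenvalues of $B$. The second contributes the remaining $n-4$ eigenvalues, computed class by class: a vector supported on the triangle $V_{2}$ and summing to zero there is an eigenvector for $(n-2)-1=n-3$, giving multiplicity $3-1=2$; a vector supported on the non-adjacent pair $V_{3}$ is an eigenvector for $n-2$, giving multiplicity $2-1=1$; and a vector supported on the clique $K_{n-6}$ on $V_{4}$ and summing to zero there is an eigenvector for $(n-1)-1=n-2$, giving multiplicity $(n-6)-1=n-7$.

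It remains to analyse the spectrum of $B$. Here I would show that $n-2$ is a root of $\det(xI-B)$: substituting $x=n-2$ and applying $R_{4}\to R_{4}-R_{3}$ to $B-(n-2)I$ produces a row $(0,0,2,0)$ whose cofactor expansion involves a $3\times 3$ minor that vanishes, so $\det(B-(n-2)I)=0$. Hence the eigenvalues of $B$ are $n-2$ together with the three roots of the cubic $\det(xI-B)/(x-(n-2))$, which a short computation identifies with the polynomial in the statement; this brings the total multiplicity of $n-2$ to $(n-7)+1+1=n-5$. Assembling the pieces yields the claimed $Q$-spectrum (as a consistency check, the eigenvalues sum to $2(n-3)+(n-6)(n-2)+\operatorname{tr}(B)=n^{2}-n-8=2|E(G)|$). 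There is no genuine obstacle here; the only slightly fiddly steps are getting the off-diagonal neighbour counts in $B$ right and carrying out the polynomial division, both routine.
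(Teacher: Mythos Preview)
Your proposal is correct: the quotient matrix $B$ is computed accurately, the eigenvectors supported on a single class with zero sum do give $n-3$ (multiplicity $2$), $n-2$ (multiplicity $1$), and $n-2$ (multiplicity $n-7$) exactly as you claim, the row operation $R_4\to R_4-R_3$ on $B-(n-2)I$ indeed leaves a row $(0,0,2,0)$ whose $3\times3$ cofactor vanishes, and the trace check $2(n-3)+(n-6)(n-2)+\operatorname{tr}(B)=n^{2}-n-8=2|E(G)|$ is valid. The paper itself offers no proof of this lemma at all --- it simply asserts that it ``can be obtained easily'' --- so there is no approach to compare; your equitable-partition argument is a clean and standard way to supply the omitted details.
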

\begin{lemma}\label{jm}
The signless Laplacian spectrum of $\overline{K_{2,4}-e \cup (n-6)K_{1}}$ consists of 
\begin{enumerate}[a.]
	\item 
	$n-4$, with multiplicity $2$;
	\item 
	$n-2$, with multiplicity $n-6$;
	\item
	and the four roots of the polynomial
	\begin{eqnarray*}
&x^4+(-5n+18)x^3+(9n^2-70n+140)x^2+(-7n^3+86n^2-360n+522)x\\
&+2n^4-34n^3+220n^2-644n+708.~~~~~~~~~~~~~~~~~~~~~~~~~~~~~~~~~~~~~~~~~~~~~~
	\end{eqnarray*}
\end{enumerate} 
\end{lemma}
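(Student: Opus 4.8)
The plan is to realize the graph as a join and then read off its signless Laplacian spectrum from an equitable partition. Write $G_{0}=K_{2,4}-e\cup (n-6)K_{1}$. Since the complement of a disjoint union is the join of the complements and $\overline{(n-6)K_{1}}=K_{n-6}$, we have $\overline{G_{0}}=\overline{K_{2,4}-e}\,\vee\, K_{n-6}=:H$. A direct inspection of $K_{2,4}-e$ shows that $\overline{K_{2,4}-e}$ is obtained from $K_{4}$ by attaching, at one of its vertices, a path on two further vertices; thus it has one vertex $w$ of degree $4$, three mutually adjacent vertices $x_{1},x_{2},x_{3}$ of degree $3$ each joined to $w$, a vertex $v$ of degree $2$ joined to $w$, and a vertex $u$ of degree $1$ joined to $v$. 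Hence in $H$ the vertices $u,v,w,x_{i}$ and the $n-6$ vertices of $K_{n-6}$ (whose set we call $Z$) have degrees $n-5,\ n-4,\ n-2,\ n-3,\ n-1$, respectively.

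First I would take the partition of $V(H)$ into the cells $\{u\},\{v\},\{w\},\{x_{1},x_{2},x_{3}\},Z$, which is easily seen to be equitable, and exhibit the eigenvectors of $Q(H)$ supported on the two symmetric cells. Since $\{x_{1},x_{2},x_{3}\}$ induces a triangle and every vertex outside it is adjacent to all three or to none of them, any vector supported on $\{x_{1},x_{2},x_{3}\}$ with zero coordinate sum is a $Q(H)$-eigenvector with eigenvalue $(n-3)+(-1)=n-4$, so $n-4$ is an eigenvalue of multiplicity $2$. Likewise, since $Z$ induces $K_{n-6}$ and every vertex outside $Z$ is adjacent to all of $Z$, any vector supported on $Z$ with zero coordinate sum yields the eigenvalue $(n-1)+(-1)=n-2$, of multiplicity $n-7$. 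These account for $n-5$ of the $n$ eigenvalues.

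The remaining five eigenvalues of $Q(H)$ are those of the quotient matrix
\[
B=\begin{pmatrix}
n-5 & 1 & 0 & 0 & n-6\\
1 & n-4 & 1 & 0 & n-6\\
0 & 1 & n-2 & 3 & n-6\\
0 & 0 & 1 & n-1 & n-6\\
1 & 1 & 1 & 3 & 2n-8
\end{pmatrix}.
\]
I would check that $\bigl(n-6,\ n-6,\ -(n-6),\ -(n-6),\ 2\bigr)^{\top}$ is an eigenvector of $B$ for $n-2$; this is the one extra copy of $n-2$, which raises its total multiplicity to $n-6$. Consequently $\det(xI-B)=\bigl(x-(n-2)\bigr)q(x)$, and carrying out this single division shows that $q(x)$ is precisely the quartic in part (c); the identity $\operatorname{tr}B=6n-20=(n-2)+(5n-18)$ is a convenient check on its $x^{3}$-coefficient, and one also verifies that neither $n-2$ nor $n-4$ is a root of $q$ so that the multiplicities are exactly as stated.

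The whole argument is routine bookkeeping, and I do not expect a genuine obstacle; the two places that demand care are assembling $B$ correctly from the equitable partition and carrying out the polynomial division $\det(xI-B)/(x-(n-2))$ without arithmetic slips. (Lemma \ref{jequam} is obtained in exactly the same fashion, starting from $\overline{K_{1,3}\cup K_{2}\cup (n-6)K_{1}}=(K_{1}\cup K_{3})\,\vee\, 2K_{1}\,\vee\, K_{n-6}$ and the analogous quotient matrix.)
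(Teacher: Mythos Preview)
Your approach is correct. The paper does not give a proof of this lemma at all; it simply states that both Lemma~\ref{jequam} and Lemma~\ref{jm} ``can be obtained easily'' and moves on. Your equitable-partition computation is exactly the sort of routine calculation the authors had in mind, and every step checks out: the identification $\overline{K_{2,4}-e}\cong K_4$ with a pendant $P_2$ at one vertex is correct, the quotient matrix $B$ is assembled correctly, the eigenvector $(n-6,n-6,-(n-6),-(n-6),2)^{\top}$ does give the extra copy of $n-2$, and the trace check on the $x^3$-coefficient is right. There is nothing to compare approaches against here---you have supplied the details that the paper omitted.
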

\begin{dfn}[\cite{j7}]
	A connected bipartite graph G with vertex partition sets U and V is said to balanced if the cardinalities of U and V are same. 
\end{dfn}
\begin{lemma}[\cite{j7}]\label{ljv}
	Let  G be a graph of order $n\ge2$. Then $\gamma_{2}(G)\le n-2$. Moreover, $\gamma_{k+1}(G)=n-2~ (1\le k\le n-1)$ if and only if $\overline{G}$ has either k balanced bipartite components or k+1 bipartite components.
\end{lemma}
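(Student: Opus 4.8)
The plan is to exploit the complementation identity $Q(G)+Q(\overline{G})=(n-2)I+J$, where $J=\mathbf{1}\mathbf{1}^{T}$ is the all-ones matrix and $\mathbf{1}$ the all-ones column vector, so that the symmetric matrix $M:=J-Q(\overline{G})$ satisfies $\theta_{i}(M)=\gamma_{i}(G)-(n-2)$ for every $i$ (in the same order). Thus $\gamma_{2}(G)\le n-2$ is equivalent to $\theta_{2}(M)\le 0$, and, for $1\le k\le n-1$, $\gamma_{k+1}(G)=n-2$ is equivalent to $\theta_{k+1}(M)=0$.

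For the bound: since $w^{T}Mw=w^{T}Jw-w^{T}Q(\overline{G})w\le(\mathbf{1}^{T}w)^{2}$ for every vector $w$, the form $w\mapsto w^{T}Mw$ is non-positive on $\mathbf{1}^{\perp}$; as every $2$-dimensional subspace meets $\mathbf{1}^{\perp}$ nontrivially, $M$ is not positive definite on any $2$-dimensional subspace, hence has at most one positive eigenvalue. Therefore $\theta_{2}(M)\le 0$, which gives $\gamma_{2}(G)\le n-2$; and then $\theta_{k+1}(M)\le 0$ for all $k\ge 1$, so $\theta_{k+1}(M)=0$ holds if and only if $M$ has at least $k+1$ non-negative eigenvalues.

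It remains to count the non-negative eigenvalues of $M$ in terms of $\overline{G}$. Let $b$ be the nullity of $Q(\overline{G})$, i.e. the number of bipartite components of $\overline{G}$; its kernel $W$ is spanned by pairwise orthogonal vectors $w_{1},\dots,w_{b}$, where $w_{j}$ equals $+1$ on one colour class and $-1$ on the other of the $j$-th bipartite component and $0$ elsewhere, so $\mathbf{1}^{T}w_{j}=0$ precisely when that component is balanced. Since $M=-Q(\overline{G})+\mathbf{1}\mathbf{1}^{T}$ is a rank-one positive semidefinite perturbation of $-Q(\overline{G})$, eigenvalue interlacing forces the spectrum of $M$ to have the shape $\theta_{1}(M)\ge 0=\theta_{2}(M)=\dots=\theta_{b}(M)$, $\theta_{b+1}(M)\in[-\gamma_{n-b}(\overline{G}),0]$, and $\theta_{i}(M)<0$ for $i\ge b+2$ (when $b=0$, only $\theta_{1}(M)$ can be non-negative). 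Hence the number of non-negative eigenvalues of $M$ is $b$ or $b+1$ according as $\theta_{b+1}(M)$ is strictly negative or zero (and is at most $1$ when $b=0$), and it only remains to decide which. If some bipartite component of $\overline{G}$ is unbalanced, then solving $Mv=0$ and splitting $v$ along $W\oplus\operatorname{range}Q(\overline{G})$ yields $\ker M=W\cap\mathbf{1}^{\perp}$, of dimension $b-1$; this is incompatible with $\theta_{b+1}(M)=0$ (which would produce the $b$ zero eigenvalues $\theta_{2},\dots,\theta_{b+1}$), so $M$ has exactly $b$ non-negative eigenvalues. If instead every bipartite component of $\overline{G}$ is balanced, let $y$ be the indicator vector of the union $B$ of the bipartite components; then $y\perp W$ and, since $W\subseteq\mathbf{1}^{\perp}$, $Mv=0$ for all $v\in W$, while $y^{T}My=|V(B)|^{2}-4|E(B)|\ge 0$ because $B$ is a disjoint union of balanced bipartite graphs. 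Thus $M$ is positive semidefinite on the $(b+1)$-dimensional space $W\oplus\langle y\rangle$, forcing $\theta_{b+1}(M)=0$ and exactly $b+1$ non-negative eigenvalues.

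To conclude: let $b_{0}$ be the number of balanced bipartite components of $\overline{G}$, so $b=b_{0}$ exactly when all bipartite components are balanced. The preceding paragraph shows that $M$ has $b_{0}+1$ non-negative eigenvalues when $b=b_{0}\ge 1$, has $b$ when $b>b_{0}$, and has at most $1$ when $b=0$; in every case the number equals $\max(b_{0}+1,b)$, except when $b_{0}=0$ and $b\le 1$, in which case it is at most $1$. Hence, for $1\le k\le n-1$, $\gamma_{k+1}(G)=n-2$ holds if and only if $M$ has at least $k+1$ non-negative eigenvalues, if and only if $\max(b_{0}+1,b)\ge k+1$, if and only if $b_{0}\ge k$ or $b\ge k+1$; that is, if and only if $\overline{G}$ has at least $k$ balanced bipartite components or at least $k+1$ bipartite components. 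The crux is the third paragraph: interlacing only confines $\theta_{b+1}(M)$ to an interval, and the entire "moreover" statement turns on whether this single eigenvalue is $0$ or strictly negative — which forces one to bring in the balanced/unbalanced distinction, through the exact dimension of $\ker M$ when a bipartite component is unbalanced and through the estimate $|V(B)|^{2}\ge 4|E(B)|$ for disjoint unions of balanced bipartite graphs when all are balanced.
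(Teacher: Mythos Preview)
The paper does not prove this lemma; it is quoted from \cite{j7} without argument, so there is no in-paper proof to compare against. Your approach via the identity $Q(G)+Q(\overline{G})=(n-2)I+J$ and the rank-one perturbation $M=J-Q(\overline{G})$ is the natural one and is essentially correct: Weyl/interlacing pins down $\theta_{2}(M)=\dots=\theta_{b}(M)=0$ and $\theta_{i}(M)<0$ for $i\ge b+2$, leaving only $\theta_{b+1}(M)$ to be resolved, and your two cases (some bipartite component unbalanced versus all balanced) handle this correctly. In the unbalanced case your computation $\ker M=W\cap\mathbf{1}^{\perp}$ is right, since $Mv=0$ with $\mathbf{1}^{T}v\neq 0$ would force $\mathbf{1}\in\operatorname{range}Q(\overline{G})=W^{\perp}$, contradicting the existence of an unbalanced $w_{j}$; note that this kernel count also rules out $\theta_{1}(M)=0$, which you use implicitly. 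In the balanced case the test vector $y=\mathbf{1}_{B}$ works because $y^{T}My=|V(B)|^{2}-4|E(B)|\ge\sum_{j}|V(H_{j})|^{2}-4|E(H_{j})|\ge 0$, and since $Mw=0$ on $W$ the quadratic form is genuinely PSD on the $(b+1)$-dimensional space $W\oplus\langle y\rangle$.

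Two small remarks. First, your summary formula ``the number equals $\max(b_{0}+1,b)$'' is valid for $b\ge 1$ and already covers $b_{0}=0$, $b=1$; only the case $b=0$ needs separate treatment, and there $\theta_{2}(M)\le\theta_{1}(-Q(\overline{G}))<0$ makes both sides of the equivalence false for every $k\ge 1$. Second, the lemma as stated in the paper omits the words ``at least'', but your reading ``at least $k$ balanced bipartite components or at least $k+1$ bipartite components'' is the correct one and matches how the result is applied later (combining $\gamma_{k+1}=n-2$ with $\gamma_{k+2}\neq n-2$ to pin down exact counts).
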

\begin{lemma}[\cite{kite}]\label{jldn}
	Let G be a graph on n vertices. Then $d_{n-1}(G)\ge \gamma_{n-1}(G)-1$. Furthermore, if the equality holds, then $d_{n-1}(G)=d_{n}(G)$.
\end{lemma}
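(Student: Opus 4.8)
The natural tool is eigenvalue interlacing applied to a $2\times 2$ principal submatrix of $Q(G)$, and the plan is as follows. I would pick distinct vertices $u$ and $v$ of $G$ realizing the two smallest degrees, with $\deg(v)=d_{n-1}(G)$ and $\deg(u)=d_{n}(G)$; such a pair exists (if $d_{n-1}(G)=d_{n}(G)$, take any two distinct vertices of that minimum degree), and $\deg(u)=d_{n}(G)\le d_{n-1}(G)=\deg(v)$. Let $B$ be the $2\times 2$ principal submatrix of $Q(G)=D_{G}+A(G)$ on $\{u,v\}$, so $B=\begin{pmatrix} d_{n}(G) & \varepsilon\\ \varepsilon & d_{n-1}(G)\end{pmatrix}$ where $\varepsilon=1$ if $u\sim v$ and $\varepsilon=0$ otherwise. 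Cauchy interlacing for principal submatrices (equivalently, the Courant--Fischer min--max formula applied to the two-dimensional subspace spanned by the coordinate vectors at $u$ and $v$) gives $\gamma_{n-1}(G)=\theta_{n-1}(Q(G))\le\theta_{1}(B)$, so the problem reduces to bounding the larger eigenvalue of the $2\times 2$ matrix $B$.

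Next I would verify $\theta_{1}(B)\le d_{n-1}(G)+1$. If $u\not\sim v$, then $B$ is diagonal and $\theta_{1}(B)=\max\{d_{n}(G),d_{n-1}(G)\}=d_{n-1}(G)$, which is strictly below $d_{n-1}(G)+1$. If $u\sim v$, write $a=d_{n}(G)$ and $b=d_{n-1}(G)$ with $a\le b$; then $\theta_{1}(B)=\tfrac{a+b}{2}+\sqrt{\big(\tfrac{b-a}{2}\big)^{2}+1}$, and the desired bound $\theta_{1}(B)\le b+1$ is equivalent, after isolating the radical, to $\sqrt{\big(\tfrac{b-a}{2}\big)^{2}+1}\le\tfrac{b-a}{2}+1$; since both sides are nonnegative ($b\ge a$), squaring reduces it to $0\le b-a$, which holds. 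The two cases together give $\gamma_{n-1}(G)\le d_{n-1}(G)+1$, and they also show that $\theta_{1}(B)=d_{n-1}(G)+1$ is possible only when $u\sim v$ and $a=b$.

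For the ``furthermore'' part I would trace the equality case. Assume $\gamma_{n-1}(G)=d_{n-1}(G)+1$; then the chain $\gamma_{n-1}(G)\le\theta_{1}(B)\le d_{n-1}(G)+1$ collapses, forcing $\theta_{1}(B)=d_{n-1}(G)+1$, hence (by the last sentence of the previous paragraph) $u\sim v$ and $a=b$, i.e. $d_{n}(G)=d_{n-1}(G)$. I do not expect a real obstacle here: the argument is interlacing plus a one-line inequality. The only points needing care are that $u$ must be chosen to have degree exactly $d_{n}(G)$ (a different choice of second minimum-degree vertex would still prove the inequality, but not the equality clause), and the harmless bookkeeping that $n\ge 2$ is implicit in the statement and that $\{u,v\}$ is well defined even when $d_{n-1}(G)=d_{n}(G)$.
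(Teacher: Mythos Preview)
The paper does not give its own proof of this lemma; it is quoted verbatim from the reference \cite{kite} and used as a tool in Section~3. So there is nothing in the present paper to compare against.

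Your argument is correct. Cauchy interlacing for the $2\times 2$ principal submatrix $B$ of $Q(G)$ on $\{u,v\}$ indeed yields $\gamma_{n-1}(G)\le\theta_{1}(B)$, and the elementary estimate $\theta_{1}(B)\le d_{n-1}(G)+1$ (with equality only if $u\sim v$ and $d_{n}(G)=d_{n-1}(G)$) gives both the inequality and the equality clause. Your handling of the vertex choice, including the case $d_{n-1}(G)=d_{n}(G)$, is fine, and the implicit assumption $n\ge 2$ is harmless. This interlacing-on-a-$2\times2$-block approach is the standard proof of such degree--eigenvalue bounds, so it is almost certainly the same as, or a minor variant of, the argument in \cite{kite}.
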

\begin{lemma}[\cite{split}]\label{jld3}
	Let G be a graph on n vertices. Then $\gamma_{3}(G)\ge d_{3}(G)-\sqrt{2}$.
\end{lemma}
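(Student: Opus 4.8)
The plan is to reduce the claim to eigenvalue interlacing applied to a well-chosen $3\times 3$ principal submatrix of $Q(G)$, combined with the Weyl-type inequality already recorded as Lemma~\ref{jlh}. We may assume $n\ge 3$, since otherwise $d_3(G)$ is not defined. First I would pick distinct vertices $v_1,v_2,v_3$ of $G$ realizing the three largest degrees, i.e. with $d(v_i)=d_i(G)$ for $i=1,2,3$ (such vertices exist because the sorted degree sequence comes from actual vertices). Let $B$ be the principal submatrix of $Q(G)=D_G+A(G)$ indexed by $\{v_1,v_2,v_3\}$; then the diagonal of $B$ is $(d_1(G),d_2(G),d_3(G))$ and its off-diagonal part is the adjacency matrix $A'$ of the subgraph of $G$ induced on $\{v_1,v_2,v_3\}$. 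By the standard interlacing inequality for principal submatrices of a symmetric matrix, $\gamma_3(G)=\theta_3(Q(G))\ge\theta_3(B)$, so it suffices to bound $\theta_3(B)$ from below.

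Next I would split $B=D'+A'$, where $D'=\mathrm{diag}(d_1(G),d_2(G),d_3(G))$ and $A'$ is as above, both Hermitian of order $3$. Applying Lemma~\ref{jlh}(i) with $N=D'$, $P=A'$ and $i=j=3$ (so that $i+j=6>3$) gives $\theta_3(B)\ge\theta_3(D')+\theta_3(A')=d_3(G)+\theta_3(A')$, where we used that the least eigenvalue of the diagonal matrix $D'$ is its smallest diagonal entry $d_3(G)$ (this is exactly the point of ordering the vertices by degree).

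It then remains to check that $\theta_3(A')\ge-\sqrt{2}$. Since $A'$ is the adjacency matrix of a graph on three vertices, that graph has at most three edges and at most one triangle, and its characteristic polynomial is $x^3-ex-2t$ with $e$ the number of edges and $t$ the number of triangles; the only possibilities are the empty graph, a single edge, the path $P_3$, and $K_3$, whose least adjacency eigenvalues are $0$, $-1$, $-\sqrt 2$, and $-1$ respectively. Hence $\theta_3(A')\ge-\sqrt 2$, the extremal value being attained precisely when $v_1,v_2,v_3$ induce a $P_3$. Combining the two displayed estimates yields $\gamma_3(G)\ge\theta_3(B)\ge d_3(G)-\sqrt 2$, as desired.

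There is no deep obstacle here; the two steps that need genuine care are (i) passing to the principal submatrix on the three \emph{highest-degree} vertices, which is what makes $\theta_3(D')$ equal to $d_3(G)$ rather than something smaller, and (ii) recognizing that among all three-vertex induced subgraphs it is $P_3$—not $K_3$ or the empty graph—that minimizes the least adjacency eigenvalue, which is where the constant $\sqrt 2$ comes from. A naive application of Lemma~\ref{jlh} directly to the full matrices $D_G$ and $A(G)$ does not work, because $\theta_n(A(G))$ can lie far below $-\sqrt2$ (e.g. for balanced complete bipartite graphs); this is precisely why the restriction to a $3\times 3$ principal submatrix, and hence interlacing, is essential.
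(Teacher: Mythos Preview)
The paper does not supply its own proof of this lemma; it is quoted from~\cite{split} without argument, so there is nothing in the paper to compare against directly. Your proof is correct and self-contained: Cauchy interlacing for the $3\times 3$ principal submatrix $B$ of $Q(G)$ indexed by three vertices of largest degree gives $\gamma_3(G)\ge\theta_3(B)$; Lemma~\ref{jlh}(i) applied to $B=D'+A'$ with $i=j=3$ yields $\theta_3(B)\ge d_3(G)+\theta_3(A')$; and the case check over the four possible $3$-vertex graphs shows $\theta_3(A')\ge-\sqrt{2}$, with equality exactly for the induced $P_3$. This is the standard route to inequalities of the shape $\gamma_k(G)\ge d_k(G)-c$ and is almost certainly the argument intended in~\cite{split}.
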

\begin{theorem}
Let $n-\alpha> 3$. Then for $\alpha\neq 3$, the graph $K_{n-\alpha}-e\vee \alpha K_{1}$ is $DQS$. If $\alpha=3$, then the only Q-cospectral mate of $K_{n-3}-e\vee \alpha K_{1}$ is $\overline{K_{1,3}\cup K_{2}\cup (n-6)K_{1}}$.	
\end{theorem}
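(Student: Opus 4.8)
The plan is to take an arbitrary graph $H$ that is $Q$-cospectral with $G:=K_{n-\alpha}-e\vee\alpha K_{1}$ and to determine $\overline{H}$ tightly enough to force $H\cong G$ (or, when $\alpha=3$, $H\cong\overline{K_{1,3}\cup K_{2}\cup(n-6)K_{1}}$). A convenient first observation is that $G\cong K_{\alpha,2}\vee K_{n-\alpha-2}$, so $\overline{G}=K_{\alpha}\cup K_{2}\cup(n-\alpha-2)K_{1}$ and $m:=|E(G)|=\binom{n}{2}-\binom{\alpha}{2}-1$; the case $\alpha=1$ gives $G=K_{n}-e$, which is $DQS$ for $n\ge 5$ by Theorem~\ref{mul}, so we may assume $\alpha\ge 2$ (the arithmetically degenerate value $\alpha=2$ being handled by the same method as $\alpha\ge 3$). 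By Lemma~\ref{dlc} the graph $H$ has $n$ vertices, $m$ edges and the same number of bipartite components as $G$; since $n-\alpha>3$ makes $K_{n-\alpha}-e$, hence $G$, connected and non-bipartite, $H$ has no bipartite component. From Remark~\ref{jls}, $n-2$ is a $Q$-eigenvalue of $G$ of multiplicity $n-\alpha-2\ge 2$, and $\gamma_{1}(G)>n-2$ (it is at least the largest degree $n-1$), so $\gamma_{2}(G)=\gamma_{3}(G)=n-2$ and $\gamma_{n-\alpha-1}(G)=n-2>\gamma_{n-\alpha}(G)$.

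First I would prove $H$ is connected: since $\gamma_{3}(H)=n-2$, Lemma~\ref{ljv} (at $k=2$) forces $\overline{H}$ to have at least two components, so $\overline{H}$ is disconnected and $H$ is connected. Then, applying both implications of Lemma~\ref{ljv} at $k=n-\alpha-2$ and $k=n-\alpha-1$ to $\gamma_{n-\alpha-1}(H)=n-2>\gamma_{n-\alpha}(H)$ shows that $\overline{H}$ has exactly $n-\alpha-1$ bipartite components, of which at most $n-\alpha-2$ are balanced; the degenerate alternative in which all bipartite components of $\overline{H}$ are balanced has no isolated vertex, forcing $n\le 2\binom{\alpha}{2}+2$, which also bounds $\alpha$ and is dispatched by hand. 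Writing $\overline{H}=K\cup jK_{1}$ where $K$ has no isolated vertex, a count of vertices against components yields $\alpha+1\le|V(K)|\le 2\alpha+2$, $|E(K)|=\binom{\alpha}{2}+1$, and $K$ has exactly $|V(K)|-\alpha-1$ bipartite components; thus $K$ is a very dense graph on boundedly many (in terms of $\alpha$) vertices, and it remains only to identify it.

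For that last step I would invoke the remaining spectral data: from $\mathrm{tr}(Q^{2})$ one gets $\sum_{i}d_{i}(H)^{2}=\sum_{i}d_{i}(G)^{2}$, from $\mathrm{tr}(Q^{3})$ one gets $\sum_{i}d_{i}(H)^{3}+6t(H)=\sum_{i}d_{i}(G)^{3}+6t(G)$, and in addition the degree inequalities of Lemmas~\ref{jldn}, \ref{jld3}, \ref{kcdas} and Lemma~\ref{p} (to eliminate near-complete candidates). Feeding the explicit degree sequence of $\overline{H}=\overline{K}\vee K_{j}$ into these constraints cuts the admissible $K$ down to $K_{\alpha}\cup K_{2}$ plus a short list of near-complete-bipartite substitutes got by trading the clique $K_{\alpha}$ for a sparser bipartite graph on a few more vertices --- the borderline ones being $K_{1,3}$ when $\alpha=3$ and $K_{2,4}-e$ when $\alpha=4$. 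For each surviving substitute one then computes enough of $Spec_{Q}(\overline{K}\vee K_{j})$ --- precisely the spectra recorded in Lemmas~\ref{jequam} and~\ref{jm} --- and checks them against Remark~\ref{jls}, in particular the multiplicity of the eigenvalue $n-\alpha$; all of them fail except when $\alpha=3$ and $K=K_{1,3}\cup K_{2}$, where $\overline{H}=K_{1,3}\cup K_{2}\cup(n-6)K_{1}$ and Lemma~\ref{jequam} --- its cubic coincides with~(\ref{jequa1}) at $\alpha=3$ --- certifies a genuine $Q$-cospectral mate. When $K=K_{\alpha}\cup K_{2}$ one recovers $\overline{H}=\overline{G}$, hence $H\cong G$, which finishes the theorem. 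The main obstacle is exactly this identification of $K$: the coarse invariants ($m$, the number and balance of bipartite components, $\sum_{i}d_{i}^{2}$) do not separate all near-misses --- for instance $\overline{K_{2,4}-e\cup(n-6)K_{1}}$ when $\alpha=4$ agrees with $G$ on all of them --- so one is forced to compute real pieces of the signless Laplacian spectrum of the candidates and compare with~(\ref{jequa1}), and the fact that the $K_{1,3}$-substitute at $\alpha=3$ passes \emph{every} such test is why that single value must be excepted.
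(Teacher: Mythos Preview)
Your overall architecture --- pass to $\overline{H}$, invoke Lemma~\ref{ljv} on the multiplicity of $n-2$ to count bipartite components of $\overline{H}$, write $\overline{H}=K\cup jK_1$, and then identify $K$ --- is exactly the paper's, and your component-counting derivation of $\alpha+1\le|V(K)|\le 2\alpha+2$ is a clean shortcut. But there is a genuine gap at the ``identification of $K$'' stage: your bounds are finite \emph{for each fixed $\alpha$}, not uniformly in $\alpha$, and your claim that the balanced alternative ``also bounds~$\alpha$ and is dispatched by hand'' is unjustified --- an inequality of the shape $n\le 2\alpha+4$ (or your $2\binom{\alpha}{2}+2$) combined with $n-\alpha>3$ does not bound~$\alpha$ at all. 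So you have not reduced the problem to a finite check; for each large~$\alpha$ there remain many candidate graphs $K$ on roughly $2\alpha$ vertices and $\binom{\alpha}{2}+1$ edges satisfying your component count.

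The paper closes this gap with a tool you do not invoke: the Weyl inequalities (Lemma~\ref{jlh}) applied to $Q(H)+Q(\overline{H})=(n-2)I+J$ pin down almost the entire $Q$-spectrum of $\overline{H}$, namely $\gamma_3(\overline{H})=\cdots=\gamma_\alpha(\overline{H})=\alpha-2$ and $\gamma_{\alpha+3}(\overline{H})=\cdots=\gamma_n(\overline{H})=0$ (equation~(\ref{jequa2})). It is precisely this eigenvalue information that lets one apply Lemmas~\ref{jldn} and~\ref{jld3} \emph{to the large component $H_1$ of $\overline{H}$} (not to $H$), yielding $d_3(H_1)\le\alpha-1$ and $d_\alpha(H_1)\ge\alpha-3$; these degree bounds, fed into $\sum d_i$ and $\sum d_i^2$, then force $\alpha\le 7$ in each subcase and leave only the handful of graphs in Figs.~\ref{jf1}--\ref{jf2}. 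Applying the degree lemmas to $H$ instead, as your outline suggests, gives only $d_2(\overline{H})\le\alpha$ (from $\gamma_{n-1}(H)=n-\alpha$), which is far too weak to bound~$\alpha$. Without~(\ref{jequa2}) or some substitute that transfers spectral information from $H$ to $\overline{H}$, the ``short list of near-complete-bipartite substitutes'' has not been shown to be short.
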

\begin{proof}
Let $G\cong K_{n-\alpha}-e\vee \alpha K_{1}$. Let $H$ be a graph $Q$-cospectral with $K_{n-\alpha}-e\vee \alpha K_{1}$. Then it is easy to see that 
\begin{eqnarray}
\displaystyle\sum_{i=1}^{n}\gamma_{i}(\overline{H})=2|E(\overline{H})|=\sum_{i=1}^{n}d_{i}(\overline{H})=\sum_{i=1}^{n}d_{i}(\overline{G})=\alpha(\alpha-1)+2,~~~~~~~~~~~~~~~~ \label{jequa3}\\
{\displaystyle\sum_{i=1}^{n}\gamma^{2}_{i}(\overline{H})=\sum_{i=1}^{n}d_{i}(\overline{H})(d_{i}(\overline{H})+1)=\sum_{i=1}^{n}d_{i}(\overline{G})(d_{i}(\overline{G})+1)=\alpha^3-\alpha^2+4.~~~
	\label{jequa4}}\end{eqnarray}
Let $\overline{H}$ be the complement graph of $H.$ If $\alpha=1$, then by (\ref{jequa3}), $|E(\overline{H})|=1$ and so $\overline{H}\cong K_{2}\cup (n-\alpha-2) K_{1}$. Hence the theorem is true for $\alpha=1$. If $\alpha=2$. Then from (\ref{jequa3}), $|E(\overline{H})|=2$ and so
$\overline{H}\cong P_{3}\cup (n-3)K_{1}~ \text{or}~ K_{2}\cup K_{2}\cup (n-4)K_{1}$. By (\ref{jequa4}), $\overline{H}$ must be isomorphic to $K_{2}\cup K_{2}\cup (n-4)K_{1}$. Thus the result is true for $\alpha=2$.\\\\ Let $\alpha\ge 3$. we divide the proof into two cases, when the polynomial (\ref{jequa1}) has a root in $(n-2, n-\alpha]$ or otherwise. Suppose the polynomial (\ref{jequa1}) has a root in $(n-2, n-\alpha]$. Then by Remark \ref{jls} the $Q$-spectrum of $H$ consists of $\gamma_{1}(H),\gamma_{2}(H)=\gamma_{3}(H)=\ldots=\gamma_{n-\alpha-1}=n-2, \gamma_{n-\alpha}(H),\gamma_{n-\alpha+1}(H)=\gamma_{n-\alpha+2}(H)=\ldots=\gamma_{n-1}(H)=n-\alpha,\gamma_{n}(H)$, where $\gamma_{1}(H)$, $\gamma_{n-\alpha}(H)$ and $\gamma_{n}(H)$ are the roots of the equation (\ref{jequa1}).  From Lemma \ref{jlh}, we have
\begin{align*}
&n-2\le \gamma_{n-\alpha-1}(H)+\gamma_{\alpha+3}(\overline{H})\le\theta_{2}(Q(H)+Q(\overline{H}))=n-2,\\\\[2mm]
&n-2=\theta_{n}(Q(H)+Q(\overline{H}))\le \gamma_{n-\alpha+1}(H)+\gamma_{\alpha}(\overline{H})=n-\alpha+\gamma_{\alpha}(\overline{H})\\\\[2mm]
&\text{and}\\\\
&n-\alpha+\gamma_{3}(\overline{H})=\gamma_{n-1}(H)+\gamma_{3}(\overline{H})\le\theta_{2}(Q(H)+Q(\overline{H}))=n-2.
\end{align*} 
Thus
\begin{equation}
\renewcommand{\arraystretch}{1.2}% To spread out the equations
\left.\begin{array}{r@{\;}l}
&\gamma_{3}(\overline{H})=\gamma_{4}(\overline{H})=\cdots=\gamma_{\alpha}(\overline{H})=\alpha-2,\\\\[2mm]
&\gamma_{\alpha+3}(\overline{H})=\gamma_{\alpha+4}(\overline{H})=\cdots=\gamma_{n}(\overline{H})=0.
\end{array}\right\} \label{jequa2}
\end{equation}
We now claim that $\overline{H}$ has a component $H_{\ast}$ with $\gamma_{2}(H_{\ast})\ge\alpha-2$. Suppose $\overline{H}$ has exactly $k$ connected components $H_{1}, H_{2},\ldots, H_{k}$. If the claim is not true, i.e., $\gamma_{2}(H_{i})< \alpha-2$, for all $1\le i\le k$. Then there exists $\alpha$ components say $H_{i_{_1}},H_{i_{_2}},\ldots,H_{i_{_\alpha}}$ such that
 $\gamma_{1}(H_{i_{_1}})=\gamma_{1}(\overline{H})\ge \alpha-2,\gamma_{1}(H_{i_{_2}})=\gamma_{2}(\overline{H})\ge \alpha-2,\gamma_{1}(H_{i_{_3}})=\ldots= \gamma_{1}(H_{i_{_\alpha}})= \alpha-2$, by (\ref{jequa2}) and also we must have $\alpha\ge4$.
Since $\gamma_{2}(H_{i_{j}})<\alpha-2$, $1\le j\le \alpha$ and $\overline{H}$ can have at most $2$    
non-zero signless Laplacian eigenvalues strictly less than $\alpha-2$, there exists at least one connected component say $H_{i_{_j}}$ such that 
$\gamma_2(H_{i_j}) = 0$. Therefore $H_{i_j}\cong K_{2}$, since $\gamma_2(H_{i_j}) = 0$ and 0 can be a
signless Laplacian eigenvalue of a connected graph with multiplicity at most 1. Hence $\alpha-2\le \gamma_{1}(H_{i_{j}})=2$ and so $\alpha=4$. Thus $|E(\overline{H})|=7$ by (\ref{jequa3}) and $\overline{H}=H_{i_{_1}}\cup H_{i_{_2}}\cup K_{2}\cup K_{2}\cup H_{0}$, for some graph $H_{0}$ (not necessarily connected). Since the maximum number of non-zero signless Laplacian eigenvalue of $\overline{H}$ is  $6$, one can easily check that there is no graph $H_{i_{_1}}\cup H_{i_{_2}}\cup H_{0}$ with 5 edges, $\gamma_2(H_{i_{1}})<2$ and $\gamma_2(H_{i_{2}})<2$. Thus our claim is true.\\\\
 Let $H_{1}$ be a connected component of $\overline{H}$ with 
$\gamma_{2}(H_{1})\ge \alpha-2$, then from Lemma \ref{ljv},
 $|V(H_{1})|\ge \gamma_{2}(H_{1})+2\ge\alpha$. Now since $\gamma_{n-\alpha-1}(H)=n-2$ and $\gamma_{n-\alpha}(H)\neq n-2$, by Lemma \ref{ljv}, it follows that $\overline{H}$ has exactly $n-\alpha-2$ or $n-\alpha-1$ bipartite components. Let us first assume that $\overline{H}$ has exactly $n-\alpha-2$ bipartite components. Since $\gamma_{n-\alpha-1}(H)=n-2$, by Lemma \ref{ljv}, all the bipartite graphs of $\overline{H}$ are balanced. Now as an isolated vertex is not balanced and also since $|V(H_{1})|\ge \alpha$, we have
 $$\overline{H}\cong\left\{\begin{array}{r@{\;}l}
 &H_{1}\cup K_{3}, \text{$H_{1}$ is a balanced bipartite graph of order $\alpha$},\\[2mm]
 &H_{1}\cup P_{4}, \text{$H_{1}$ is a balanced bipartite graph of order $\alpha$},\\[2mm]
 &H_{1}\cup C_{4}, \text{$H_{1}$ is a balanced bipartite graph of order $\alpha$},\\[2mm]
 &H_{1}\cup K_{2}\cup K_{2}\cup K_{2}, \text{$H_{1}$ is a balanced bipartite graph of order $\alpha$},\\[2mm]
 &H_{1}\cup K_{2}\cup K_{2}, \text{$H_{1}$ is a balanced bipartite graph of order $\alpha+1$},\\[2mm]
  &H_{1}\cup K_{2}, \text{$H_{1}$ is a balanced bipartite graph of order $\alpha+2$},\\[2mm]
   &H_{1}, \text{$H_{1}$ is a balanced bipartite graph of order $\alpha+3$},\\[2mm]
    &H_{1}\cup K_{2}\cup K_{2}, \text{$H_{1}$ is a non bipartite graph of order $\alpha$},\\[2mm]
     &H_{1}\cup K_{2}, \text{$H_{1}$ is a non bipartite graph of order $\alpha+1$},\\[2mm]
      &H_{1}, \text{$H_{1}$ is a non bipartite graph of order $\alpha+2$}.\\[2mm]
 \end{array}\right.$$
Case I: Let $\overline{H}\cong H_{1}\cup K_{3}$ or $H_{1}\cup P_{4}~ \text{or}~ H_{1}~\cup ~C_{4}$, $H_{1}$ is a balanced bipartite graph of order $\alpha$. Then $\alpha$ is even and $\gamma_{\alpha+1}(\overline{H})+\gamma_{\alpha+2}(\overline{H})\le 4$. If $\alpha=4$,  then $H_{1}\cong C_{4} ~\text{or}~ P_{4}$. In this case, $\overline{H}$ does not satisfy (\ref{jequa4}), a contradiction. Hence $\alpha\ge 6$. Now  using Lemmas \ref{ljv}, \ref{lq}, \ref{l2} and by (\ref{jequa2}) and (\ref{jequa3}), we get 
$$3\alpha-2=\gamma_{\alpha+1}(\overline{H})+\gamma_{\alpha+2}(\overline{H})+\gamma_{2}(\overline{H})+\gamma_{1}(\overline{H})\le 2\alpha+2.$$ This implies that $\alpha\le 4$, a contradiction.\\\\
Case II: Let  $\overline{H}\cong H_{1}\cup K_{2}\cup K_{2}\cup K_{2}$, \text{$H_{1}$ is a balanced bipartite graph of order $\alpha$}. Then $\alpha-2=\gamma_{\alpha}(\overline{H})= 2$ and so $\alpha= 4$. Since $H_{1}$ is a balanced bipartite graph of order $\alpha$, we must have  $H_{1}\cong C_{4}$ or $P_{4}$. In either case, we get a contradiction to (\ref{jequa4}).   \\\\      
Case III: Let $\overline{H}\cong H_{1}\cup K_{2}\cup K_{2}$, $H_{1}$ is a balanced bipartite graph of order $\alpha+1$. Then $\gamma_{\alpha+1}(\overline{H})+\gamma_{\alpha+2}(\overline{H})\le 4$. Using Lemmas \ref{ljv}, \ref{lq}, \ref{l2} and by (\ref{jequa2}) and (\ref{jequa3}), we get 
$$3\alpha-2=\gamma_{\alpha+1}(\overline{H})+\gamma_{\alpha+2}(\overline{H})+\gamma_{2}(\overline{H})+\gamma_{1}(\overline{H})\le 2\alpha+4.$$ 
Thus $3\le\alpha\le6$. Since $H_{1}$ is a balanced bipartite graph of order $\alpha+1$, we must have $\alpha=3$ or 5. 
If $\alpha=3$, then by (\ref{jequa3}), $|E(\overline{H})|=4$. So, $H_{1}$ is a bipartite graph with 4 vertices and 2 edges. This is impossible. If  $\alpha=5$, then by (\ref{jequa3}), $|E(\overline{H})|=11$. So, $H_{1}\cong K_{3,3}$. This contradicts (\ref{jequa4}).\\\\
Case IV: Let $\overline{H}\cong H_{1}\cup K_{2}$, $H_{1}$ is a balanced bipartite graph of order $\alpha+2$. Then by (\ref{jequa3}), $\alpha(\alpha-1)=2|E(\overline{H})|\le (\alpha+2)^{2}/2$. Thus $3\le\alpha\le 6$. Since $H_{1}$ is a balanced bipartite graph of order $\alpha+2$, we must have $\alpha=4$ or $6$. If $\alpha=4$, then  by (\ref{jequa3}), $|E(\overline{H})|=7$. Hence $H_{1}$ is a balanced bipartite graph with 6 vertices and 6 edges. Therefore $H_{1}$ must be a graph with degree sequence $(2,2,\ldots,2)$ or $(3,2,2,2,2,1)$ or $(3,3,2,2,1,1)$. But none of them satisfy (\ref{jequa4}). If $\alpha=6$, then  by (\ref{jequa3}), $|E(\overline{H})|=16$. Hence $H_{1}$ is a balanced bipartite graph with 8 vertices and 15 edges. Thus $H_{1}\cong K_{4,4}-e$. This is a contradiction to (\ref{jequa4}).\\\\
Case V: Let $\overline{H}\cong H_{1}$, $H_{1}$ is a balanced bipartite graph of order $\alpha+3$ . Then by (\ref{jequa3}), $\alpha(\alpha-1)+2=2|E(H_{1})|\le (\alpha+3)^{2}/2$. Thus $3\le\alpha\le 8$. Since $H_{1}$ is a balanced bipartite graph of order $\alpha+3$, we must have $\alpha=3$, $5$ or $7$. If $\alpha=3$, then $E(\overline{H})=4$. Hence $H_{1}$ is a  connected balanced bipartite graph with 6 vertices and 4 edges. This is impossible. If $\alpha=5$, then $E(\overline{H})=11$. Thus $H_{1}$ is a balanced bipartite graph with 8 vertices and 11 edges. So the degree sequence of $H_{1}$ is one among (4,4,4,3,2,2,2,1), (4,4,3,3,3,2,2,1),(4,3,3,3,3,3,2,1), (3,3,3,3,3,3,2,2), (4,3,3,3,3,2,2,2), (4,4,3,3,2,2,2,2) and (4,4,3,3,3,3,1,1). But none of them satisfy (\ref{jequa4}). If $\alpha=7$, then $|E(\overline{H})|=22$ and so $H_{1}$ is a balanced bipartite graph with 10 vertices and 22 edges. Thus the degree sequence of  $H_{1}$ is one among (3,3,4,4,5,5,5,5,5,5), (3,4,4,4,4,5,5,5,5,5), (2,4,4,4,5,5,5,5,5,5) and (4,4,4,4,4,4,5,5,5,5). This contradicts (\ref{jequa4}).\\\\
Case VI: Let $\overline{H}\cong H_{1}\cup K_{2}\cup K_{2}$, $H_{1}$ is a non bipartite graph of order $\alpha$. Suppose $\gamma_{\alpha+1}(\overline{H})=\gamma_{\alpha+2}(\overline{H})=2=\gamma_{1}(K_{2})$. Then 
by (\ref{jequa2}) and from Lemma \ref{ljv}, $\alpha-2=\gamma_{\alpha}(H_{1})\le\gamma_{2}(H_{1})\le\alpha-2$. Thus $\gamma_{\alpha}(H_{1})=\gamma_{\alpha-1}(H_{1})=\ldots\gamma_{2}(H_{1})=\alpha-2$ and so by Lemma \ref{ljv}, $H_{1}\cong K_{\alpha}$. This contradicts (\ref{jequa4}). If either $\gamma_{\alpha+1}(\overline{H})\neq 2$ or $\gamma_{\alpha+2}(\overline{H})\neq 2$, then by (\ref{jequa2}), $\alpha=4$ and so $H_{1}\cong K_{4}-e$. This is not possible by (\ref{jequa4}).\\\\   
Case VII: Let $\overline{H}\cong H_{1}\cup K_{2}$, $H_{1}$ is a non bipartite graph of order $\alpha+1$. Then $n-\alpha=3$, a contradiction to the hypothesis.\\\\ 
Case VIII: Let $\overline{H}\cong H_{1}$, $H_{1}$ is a non bipartite graph of order $\alpha+2$.
Then $n-\alpha=2$, a contradiction.\\\\
Now, suppose $\overline{H}$ has exactly $n-\alpha-1$ bipartite graphs. Then $\gamma_{\alpha+2}(\overline{H})=0$ and 
$$\overline{H}\cong\left\{\begin{array}{r@{\;}l}
&H_{1}\cup K_{2}\cup K_{2}\cup (n-\alpha-4)K_{1}, \text{$H_{1}$ is a bipartite graph of order $\alpha$},\\[2mm]
&H_{1}\cup P_{3}\cup (n-\alpha-3)K_{1}, \text{$H_{1}$ is a bipartite graph of order $\alpha$},\\[2mm]
&H_{1}\cup K_{2}\cup (n-\alpha-3)K_{1}, \text{$H_{1}$ is a bipartite graph of order $\alpha+1$},\\[2mm]
&H_{1}\cup (n-\alpha-2)K_{1}, \text{$H_{1}$ is a bipartite graph of order $\alpha+2$},\\[2mm]
&H_{1}\cup K_{2}\cup (n-\alpha-2)K_{1}, \text{$H_{1}$ is a non bipartite graph of order $\alpha$},\\[2mm]
&H_{1}\cup  (n-\alpha-1)K_{1}, \text{$H_{1}$ is a non bipartite graph of order $\alpha+1$}.\\[2mm]
\end{array}\right.$$
Case I: Let $\overline{H}\cong H_{1}\cup K_{2}\cup K_{2}\cup (n-\alpha-4)K_{1}$, \text{$H_{1}$ is a bipartite graph of order $\alpha$}. Then from (\ref{jequa2}), $\alpha=4$  and so by (\ref{jequa3}), $H_{1}$ is a bipartite graph with 4 vertices and 5 edges. This is impossible.\\\\
Case II: Let $\overline{H}\cong H_{1}\cup P_{3}\cup (n-\alpha-3)K_{1}$, \text{$H_{1}$ is a bipartite graph of order $\alpha$}. Then $\alpha-2\le 3$. If $\alpha=4$ or $5$, then from (\ref{jequa3}), $H_{1}$ is either a bipartite graph with 5 vertices and 9 edges or a bipartite graph with 4 vertices and 5 edges. This is impossible. If $\alpha=3$, then by (\ref{jequa3}), $H_{1}\cong P_{3}$, which is a contradiction to (\ref{jequa4}). \\\\
Case III: Let $\overline{H}\cong H_{1}\cup K_{2}\cup(n-\alpha-3)K_{1}$, \text{$H_{1}$ is a bipartite graph of order $\alpha+1$}. Using Lemmas \ref{ljv}, \ref{lq}, \ref{l2} and by (\ref{jequa2}) and (\ref{jequa3}), we get 
$$3\alpha-2=\gamma_{\alpha+1}(\overline{H})+\gamma_{\alpha+2}(\overline{H})+\gamma_{2}(\overline{H})+\gamma_{1}(\overline{H})\le 2\alpha+2.$$ This implies that $3\le\alpha\le4$. If $\alpha=3$, then $H_{1}$ is a bipartite graph with 4 vertices and 3 edges. Thus $H_{1}\cong  K_{1,3}$, by (\ref{jequa4}). From Remark \ref{jls} and by Lemma \ref{jequam}, the graphs $\overline{K_{1,3}\cup K_{2}\cup (n-6)K_{1}}$  and $K_{n-3}-e\vee 3 K_{1}$ are $Q$-cospectral.  If $\alpha=4$, then $H_{1}$ is a bipartite graph with 5 vertices and 6 edges, i.e., $H_{1}\cong K_{2,3}$, which is a contradiction to (\ref{jequa4}).\\\\
Case IV: Let $\overline{H}\cong H_{1}\cup (n-\alpha-2)K_{1}$, \text{$H_{1}$ is a bipartite graph of order $\alpha+2$}. Then by (\ref{jequa3}), $\alpha(\alpha-1)+2=2|E(H_{1})|\le (\alpha+2)^{2}/2$. Therefore $3\le \alpha\le 6$. If $\alpha=3$, then by (\ref{jequa3}), $H_{1}$ is a connected bipartite graph with 5 vertices and 4 edges. Therefore $H_{1}\cong P_{5}$, by (\ref{jequa4}). This is impossible because $\alpha-2=1$ (belonging to $Spec_{Q}(\overline{H})$) is not a signless Laplacian eigenvalue of $P_{5}$. If $\alpha=4$, then by (\ref{jequa4}), $H_{1}$ is a bipartite graph with 6 vertices and 7 edges, and so $H_{1}\cong K_{2,4}-e$ by (\ref{jequa4}). Now by Lemma \ref{jm} and Remark \ref{jequa4} it can be seen that the graphs $\overline{K_{2,4}-e\cup (n-6)K_{1}}$  and $K_{n-4}-e\vee 4 K_{1}$ are not $Q$-cospectral.  If $\alpha=5$, then by (\ref{jequa3}), $H_{1}$ is a bipartite graph with 7 vertices and 11 edges, i.e., $H_{1}\cong K_{3,4}-e$. This cannot be true by (\ref{jequa4}). If $\alpha=6$, then by (\ref{jequa3}), $H_{1}$ is a bipartite graph with 8 vertices and 16 edges, i.e., $H_{1}\cong K_{4,4}$. This is not possible by (\ref{jequa4}).\\\\
Case V: Let $\overline{H}\cong H_{1}\cup K_{2}\cup (n-\alpha-2)K_{1}$, \text{$H_{1}$ is a non bipartite graph of order $\alpha$}. If $\gamma_{\alpha+1}(\overline{H})\neq2=\gamma_{1}(K_{2})$, then $\alpha-2\le 2$ and so $\alpha= 3$ or $4$. If $\alpha=3$, then $H_{1}\cong K_{3}$  and also if $\alpha=4$, then by (\ref{jequa3}), $H_{1}\cong K_{4}$. So we are done. Suppose $\gamma_{\alpha+1}(\overline{H})=2$ then by (\ref{jequa2}), $H_{1}$ is a non bipartite graph of order $\alpha$ and $\alpha-2$ is an $Q$-eigenvalue of $H_{1}$ with multiplicity at least  $\alpha-2$. Therefore $H_{1}\cong K_{\alpha}$ or  by Theorem \ref{mul}, $H_{1}\cong K_{\alpha}-e$. The latter case  contradicts (\ref{jequa4}).\\\\
Case VI: Let $\overline{H}\cong H_{1}\cup (n-\alpha-1)K_{1}$, \text{$H_{1}$ is a non bipartite graph of order $\alpha+1$}. By (\ref{jequa2}) and from Lemmas \ref{jldn} and \ref{jld3}, we have $d_{3}(H_{1})\le \alpha-1$ and $d_{\alpha}(H_{1})\ge \alpha-3$.\\\\
Subcase I: Suppose $d_{\alpha}(H_{1})=d_{\alpha+1}(H_{1})$ and $x_{i}$ is the number of vertices of $H_{1}$ of degree $\alpha-i$, $i=0,1,2,3$. Then by (\ref{jequa3}) and (\ref{jequa4}), 
\begin{equation}\label{jequa5}
\renewcommand{\arraystretch}{1.2}% To spread out the equations
\left.\begin{array}{r@{\;}l}
&x_{0}+x_{1}+x_{2}+x_{3}=\alpha+1,\\[2mm]
&\alpha x_{0}+(\alpha-1)x_{1}+(\alpha-2)x_{2}+(\alpha-3)x_{3}=\alpha(\alpha-1)+2,\\[2mm]
&\alpha^2 x_{0}+(\alpha-1)^2x_{1}+(\alpha-2)^2x_{2}+(\alpha-3)^2x_{3}=\alpha(\alpha-1)^2+2.\\[2mm]
\end{array}\right\} 
\end{equation}  
Solving (\ref{jequa5}), for $x_{1}$, $x_{2}$ and $x_{3}$, we obtain
\begin{equation}\label{jequa6}
\renewcommand{\arraystretch}{1.2}% To spread out the equations
\left.\begin{array}{r@{\;}l}
&x_{1}=(1/2)\alpha^2-(7/2)\alpha-3x_{0}+9,\\[2mm]
&x_{2}=-\alpha^2+3x_{0}+8\alpha-13,\\[2mm]
&x_{3}=(1/2)\alpha^2-x_{0}-(7/2)\alpha+5.\\[2mm]
\end{array}\right\} 
\end{equation}  
Since $d_{3}(H_{1})\le \alpha-1$, we have $x_{0}=0,1,2$ and so from (\ref{jequa6}), it is easy to see that $x_{1}$, $x_{2}$ and $x_{3}$ are all non negative if and only if 
\begin{enumerate}[a.]
\item	
$x_{0}=0~ \text{and}~ \alpha=5~ (x_{1}=4, x_{2}=2, x_{3}=0)$;
\item 
$x_{0}=1~ \text{and}~ \alpha=6~ (x_{1}=3, x_{2}=2, x_{3}=1)$;
\item 
$x_{0}=2~ \text{and}~ \alpha=6~ (x_{1}=0, x_{2}=5, x_{3}=0)$;
\item 
$x_{0}=2~ \text{and}~ \alpha=7~ (x_{1}=3, x_{2}=0, x_{3}=3)$.
\end{enumerate}
Thus the degree sequence of $H_{1}$ is one among the following: (4,4,4,4,3,3), (6, 5,5,5,4,4,3), (6,6,4,4,4,4,4) and (7,7,6,6,6,4,4,4). Hence  $H_{1}$ must be one of the  graphs as depicted in Fig. \ref{jf1}. 
\begin{figure}[t]
	\begin{align*}
	\includegraphics[height=0.85in]{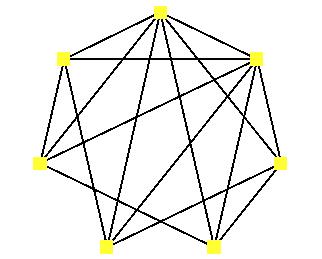}&~~~
	\includegraphics[height=0.85in]{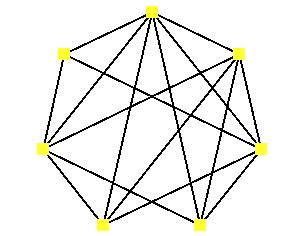}~~~
	\includegraphics[height=0.85in]{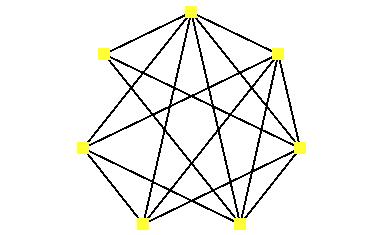}\\&~~~~\includegraphics[height=1.15in]{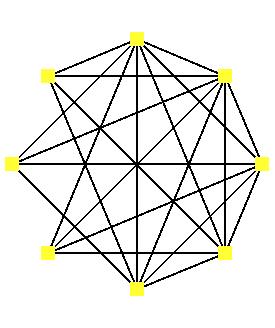}\\
	\includegraphics[height=0.85in]{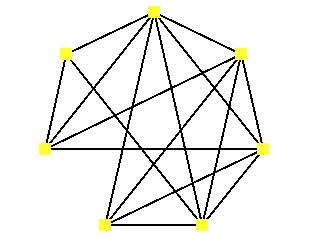}&~
	\includegraphics[height=0.85in]{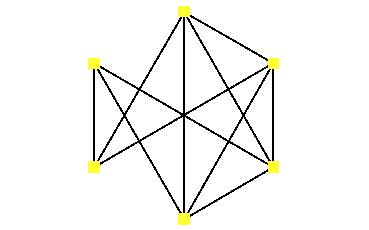}
	\includegraphics[height=0.85in]{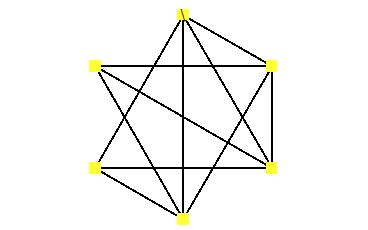}\\
	\end{align*}
	\caption{Non isomorphic graphs with degree sequence (4,4,4,4,3,3), (6, 5,5,5,4,4,3), (6,6,4,4,4,4,4) and (7,7,6,6,6,4,4,4).}
	\label{jf1}
\end{figure}\\\\
Computing the signless Laplacian spectrum of the graphs as depicted in Fig. \ref{jf1}, it can be seen that none of these graphs have $\alpha-2$ as its $Q$-eigenvalue with multiplicity at least $\alpha-2$, which contradicts (\ref{jequa2}).\\\\
Subcase II: Suppose $d_{\alpha}(H_{1})\neq d_{\alpha+1}(H_{1})$. Assume that $d_{\alpha+1}(H_{1})\ge \alpha-2$. Then $d_{\alpha}(H_{1})=d_{\alpha-1}(H_{1})=\ldots=d_{3}(H_{1})=\alpha-1$.  Thus $$2\alpha-2\le d_{1}(H_1)+d_{2}(H_{1})=\alpha(\alpha-1)+2-(\alpha-2)(\alpha-1)-\alpha+2=\alpha+2.$$ Therefore $\alpha=3,4$. If $\alpha=3$, then $H_{1}$ is a graph with degree sequence $(3,2,2,1)$. This contradicts (\ref{jequa4}). If $\alpha=4$, then $H_{1}$ is a graph with degree sequence $(3,3,3,3,2)$. This is not possible by (\ref{jequa4}). Hence $d_{\alpha+1}\le \alpha-3$.\\\\
Let $x_{i}$, $i=0,1,2$ be the number of vertices of $H_{1}$ with degree $\alpha-i$. Then
\begin{equation}
\renewcommand{\arraystretch}{1.2}% To spread out the equations
\left.\begin{array}{r@{\;}l}
&x_{0}+x_{1}+x_{2}=\alpha,\\[2mm]
&\alpha x_{0}+(\alpha-1)x_{1}+(\alpha-2)x_{2}+d_{\alpha+1}=\alpha(\alpha-1)+2, \\[2mm]
&\alpha^2 x_{0}+(\alpha-1)^2x_{1}+(\alpha-2)^2x_{2}+d_{\alpha+1}^2=\alpha(\alpha-1)^2+2.\\[2mm]
\end{array}\right\} \label{jequa7}
\end{equation} 
Solving \ref{jequa7}, we obtain
\begin{equation}
\renewcommand{\arraystretch}{1.2}% To spread out the equations
\left.\begin{array}{r@{\;}l}
&x_{0}=-(1/2)d_{\alpha+1}^2+\alpha d_{\alpha+1}-(3/2)d_{\alpha+1}-2\alpha+4,\\[2mm]
&x_{1}= d_{\alpha+1}^2-2\alpha d_{\alpha+1}+2d_{\alpha+1}+5\alpha-6. \\[2mm]
\end{array}\right\}\label{jequa8} 
\end{equation}
Since $d_{\alpha+1}\le \alpha-3$,
$x_{0}\ge \alpha(d_{\alpha+1}/2-2)+4$, by (\ref{jequa8}). Therefore $d_{\alpha+1}=3$, since $x_{0}=0,1,2$. Hence  $x_{0}=\alpha-5$ and $x_{1}=9-\alpha$, by (\ref{jequa8}). Now as $x_{0}=0,1,2$ and $d_{\alpha+1}\le \alpha-3$, we must have either $(\alpha=6, x_{0}=1, x_{1}=3, x_{2}=2)$ or $(\alpha=7, x_{0}=2, x_{1}=2,x_{2}=3)$. Therefore $H_{1}$ is a graph with degree sequence (6,5,5,5,4,4,3) or (7,7,6,6,5,5,5,3). Hence  $H_{1}$ must be one of the  graphs as depicted in Fig. \ref{jf2}. 
\begin{figure}[h]
	\begin{align*}
	\includegraphics[height=0.8in]{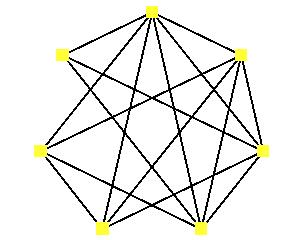}&~
	\includegraphics[height=0.8in]{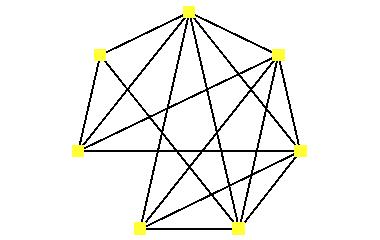}&~
	\includegraphics[height=0.85in]{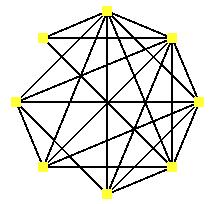}&~~~~~~
	\includegraphics[height=0.85in]{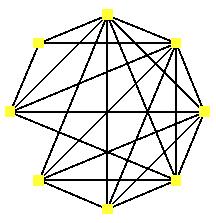}\\
	\end{align*}
	\caption{Non isomorphic graphs with degree sequence (6,5,5,5,4,4,3) or (7,7,6,6,5,5,5,3). }
	\label{jf2}
\end{figure}Since none of these graphs has $\alpha-2$ as its $Q$-eigenvalue with multiplicity at least $\alpha-2$ we get a contradiction to (\ref{jequa2}). Similarly the theorem can be proved when the polynomial (\ref{jequa1}) does not have root in $(n-2, n-\alpha]$. This completes the proof.
\end{proof}	
\label{Bibliography}
%\lhead{\emph{Bibliography}}  % Change the left side page header to "Bibliography"
\bibliographystyle{amsplain}  % Use the "unsrtnat" BibTeX style for formatting the Bibliography
\bibliography{comingblb}  % The references (bibliography) information are stored in the file named "Bibliography.bib"
\end{document}